\newtheorem{theorem}{Theorem}[section]
\newtheorem{corollary}{Corollary}
\newtheorem{lemma}[theorem]{Lemma}
\theoremstyle{definition}
\newtheorem{definition}[theorem]{Definition}
\newtheorem{remark}{Remark}
\newtheorem*{claim}{Claim}
\newcommand{\Rmnum}[1]{\expandafter\@slowromancap\romannumeral #1@}
\title[Random bundle transformations]
      {A local variational principle for random bundle transformations}
\author[Xianfeng Ma and ercai Chen]{}
  \email{xianfengma@gmail.com}
 \email{ecchen@njnu.edu.cn}
\thanks{
The authors are  supported by the National Natural
Science Foundation of China (Grant No. 10971100).
The first author is supported by the Fundamental Research Funds for
the Central Universities.
The second author is partially supported by National Basic Research Program of China (973 Program) (Grant No. 2007CB814800)
}
\begin{document}
\maketitle

\centerline{\scshape Xianfeng Ma }
\medskip
{\footnotesize
   \centerline{Department of Mathematics}
   \centerline{ East China University of Science and Technology, Shanghai 200237, China}
} 

\medskip

\centerline{\scshape Ercai Chen}
\medskip
{\footnotesize
 \centerline{School of Mathematical Science}
   \centerline{Nanjing Normal University, Nanjing 210097, China}
   \centerline{and}
   \centerline{Center of Nonlinear Science}
   \centerline{Nanjing University, Nanjing 210093, China}
} %

\bigskip


\begin{abstract}
We introduce local topological entropy $h_{\text{top}}(T, \mathcal{U})$ and two kinds of local measure-theoretic entropy $h_{\mu}^{(r)-}(T,\mathcal{U})$ and $h_{\mu}^{(r)+}(T,\mathcal{U})$ for random bundle transformations. We derive a variational inequality of random local entropy for $h_{\mu}^{(r)+}(T,\mathcal{U})$.
As an application of such relation we prove a local variational principle in random dynamical system.

\end{abstract}


\section{Introduction}

Since the introduction of measure-theoretical entropy for an invariant measure  \cite{Kol} and topological entropy  \cite{AKM}, the relationship between these two kinds of entropy has gained a lot of attention. By the work of Goodwyn \cite{Gwyn} and Goodman \cite{Gman}, the classical variational principle was completed, namely,
\begin{equation*}
\sup_{\mu}h_{\mu}(\varphi)=h_{\text{top}}(\varphi),
\end{equation*}
where $\varphi$ is a homeomorphism from a compact metric space $X$ to itself, and the supremum is taken over all invariant measures.
A short proof for it was given by Misiurewicz \cite{Misiurewicz}.
For a factor map between two dynamical systems $(X,\varphi)$ and $(Y,\phi)$ the notions of relative topological entropy $h_{\text{top}}(\varphi,X\mid Y)$ and relative measure-theoretical entropy $h_{\mu}(\varphi,X\mid Y)$ for an invariant measure were also introduced \cite{LedWal}. Ledrappier {\it et al.} \cite{LedWal} and Downarowicz {\it et al.} \cite{DS} obtained the relative variational principle
\begin{equation*}
\sup_{\mu}h_{\mu}(\varphi,X\mid Y)=h_{\text{top}}(\varphi,X\mid Y).
\end{equation*}
The random topological entropy has been studied by Kifer \cite{Kifer} for the independent identically distributed random transformations. For  the random transformations $T$  and the corresponding skew product transformation $\Theta:\Omega\times X\rightarrow \Omega\times X$, he suggested the following relation between random measure-theoretical entropy and random topological entropy:
\begin{equation*}
\sup\{  h_{\mu}^{(r)}(T): \mu \,\,\text{is}\,\, \Theta \text{-invariant} \}=h_{\text{top}}(T),
\end{equation*}
where  $h_{\mu}^{(r)}(T)$ and $h_{\text{top}}(T)$ are the random measure-theoretical entropy and random topological entropy, respectively.
This result was extended by Bogensch{\"u}tz \cite{Bogen} to random transformations acting on one space.
Kifer \cite{Kifer2001} formulated this variational principle in full generality for random bundle transformations.

The entropy concept can be localized by defining entropy pairs or tuples both in measure-theoretical and topological situations \cite{GY}. To study the relationship between the two kinds of entropy pairs or tuples, one needs a local version of the variational principle. Blanchard {\it et al.} \cite{Blanchard1997} showed that for a given topological dynamical system $(X,\varphi)$ and an open cover $\mathscr{U}$ of $X$ there exists an invariant measure $\mu$ with $\inf_{\alpha}h_{\mu}(\varphi,\alpha)\geq h_{\text{top}}(\varphi,\mathscr{U})$, where the infimum is taken over all partitions of $X$ which are finer than $\mathscr{U}$. Huang {\it et al.} \cite{huangye2006} provided some kind of converse statement of this result. To study the question of whether the two quantities is equal or not, Romagnoli \cite{Rom2003} introduced two kinds of measure-theoretical entropy $h_{\mu}^{-}$ and $h_{\mu}^{+}$ for covers. He showed that for a topological dynamical system $(X,\varphi)$ there is an invariant measure $\mu$ with $h_{\mu}^{-}(\varphi, \mathscr{U})=h_{\text{top}}(\varphi,\mathscr{U})$, and consequently obtained the local variational principle for a given open cover, i.e.
\begin{equation*}
\max_{\mu}h_{\mu}^{-}(\varphi, \mathscr{U})=h_{\text{top}}(\varphi,\mathscr{U}).
\end{equation*}
For a factor map between two topological dynamical systems  $(X,\varphi)$ and $(Y,\phi)$, Huang {\it et al.} \cite{Huang2006} introduced two notions of measure-theoretical conditional entropy for covers, namely $h_{\mu}^-(\varphi,\mathscr{U}\mid Y)$ and $h_{\mu}^+(\varphi,\mathscr{U}\mid Y)$. They showed that for the factor map and a given open cover $\mathscr{U}$ of $X$, the local relative variational principle holds, i.e.
\begin{equation*}
\max_{\mu}h_{\mu}^{-}(\varphi, \mathscr{U}\mid Y)=h_{\text{top}}(\varphi,\mathscr{U}\mid Y).
\end{equation*}
We remark that the classical variational principle could follow from the local ones or the relative ones by some simple arguments.

Now it is a natural question if there exists a local variational principle for random bundle transformations. We will address this question in the current paper.

To study the question we introduced two notions of random measure-theoretical entropy for covers in random dynamical system, namely $h_{\mu}^{(r)-}(T,\mathcal{U})$ and $h_{\mu}^{(r)+}(T,\mathcal{U})$.
We derive  a variational inequality of random entropy for $h_{\mu}^{(r)+}(T,\mathcal{U})$, i.e.
for a given open cover $\mathcal{U}$ of the measurable subset $\mathcal{E}\subset \Omega \times X$, there always exists a  $\Theta$-invariant measure $\mu $ such that $h_{\mu}^{(r)+}(T,\mathcal{U})\geq h_{\text{top}}(T,\mathcal{U})$. Moreover, Using this variational inequality, we could show that for a given open cover $\mathcal{U}$,
\begin{equation*}
\max\{  h_{\mu}^{(r)-}(T,\mathcal{U}): \mu \,\,\text{is}\,\, \Theta \text{-invariant} \}=h_{\text{top}}(T,\mathcal{U}),
\end{equation*}
The classical variational principle for random bundle transformations follows from the local one.

This paper is organized as follows.
In Section \ref{section2}, we recall the notion of random measure-theoretical entropy, introduce the notions of random measure-theoretical entropy and topological entropy for covers and give the relationship of them.
In Section \ref{section3}, we introduce another notion of random measure-theoretical entropy for covers and  prove a variational inequality of random entropy.
In Section \ref{section4}, we give some relations of the two notions of random measure-theoretical entropy for covers and prove the local variational principle for random bundle transformations.

\section{Preliminaries}\label{section2}
The setup consists of a probability space $(\Omega, \mathcal{F},P)$, together with a $P$-preserving transformation $\vartheta$, of a compact metric space $X$ together with the distance function $d$ and the Borel $\sigma$-algebra $\mathcal{B}$, and of a set $\mathcal{E}\subset \Omega\times X$ measurable with respect to the product $\sigma$-algebra $\mathcal{F}\times \mathcal{B}$ and such that the fibers $\mathcal{E}_{\omega}=\{x\in X: (\omega,x)\in \mathcal{E}\}$, $\omega\in \Omega$, are compact. The latter (see \cite{Castaing}) means that the mapping $\omega\rightarrow \mathcal{E}_{\omega}$ is measurable with respect to the Borel $\sigma$-algebra induced by the Hausdorff topology on the space $\mathcal{K}(X)$ of compact subsets of $X$ and that the distance function $d(x,\mathcal{E}_{\omega})$ is measurable in $\omega\in \Omega$ for each $x\in X$.  We assume that $\mathcal{F}$ is complete, countably generated, and separated points, and so $(\Omega, \mathcal{F},P)$ is a Lebesgue space.
A continuous (or homeomorphic) bundle
random dynamical system (RDS) $T$ over $(\Omega,\mathcal {F},
P, \vartheta)$ is generated by map $T_{\omega}:
\mathcal{E_{\omega}}\rightarrow \mathcal{E_{\vartheta \omega}}$ with
iterates $T_{\omega}^{n}=T_{\vartheta^{n-1}\omega} \cdots
T_{\vartheta\omega}T_{\omega}$, $ n \geq 1$,  and
$T_{\omega}^{0}=id$, so that the map $(\omega, x) \rightarrow
T_{\omega}x$ is measurable and the map $x \rightarrow T_{\omega}x$
is a continuous map (or a homeomorphism, respectively) for $P$-almost surely (a.s.) $\omega$. The map
$\Theta : \mathcal{E} \rightarrow \mathcal{E}$ defined by
$\Theta(\omega,x)=$ $(\vartheta\omega, T_{\omega}x)$ is called the skew
product transformation.

A {\it cover}  is a finite family of measurable subsets $\{U_i\}_{i=1}^k$ of $\Omega\times X$  such that $\bigcup_{i=1}^kU_i=\Omega\times X$ and  the $\omega$-section $U_i(\omega)=\{x\in X:(\omega,x)\in \Omega\times X\}$ is  Borel for each $i\in \{1,\dots, k\}$. Obviously, $\mathcal{U}(\omega)=\{U_i(\omega)\}_{i=1}^k$ is a Borel cover of $X$ in the usual sense.
 A {\it partition} of $\Omega\times X$   is a cover of $\Omega\times X$ whose elements are pairwise disjoint. An {\it open   cover } of $\Omega\times X$ is a  cover of $\Omega\times X$ such that  the $\omega$-sections of whose elements are open   subsets of $X$. Denote the sets of partitions by $\mathcal{P}_{\Omega\times X}$, the sets of covers by $\mathcal{C}_{\Omega\times X}$, the sets of open covers by $\mathcal{C}_{\Omega\times X}^o$.
Denote $\mathcal{C}_{\Omega\times X}^{o'}$ by the family of $\mathcal{U}\in \mathcal{C}_{\Omega\times X}^{o}$ with the form $\mathcal{U}=\{\Omega\times U_i\}$, where $\{U_i\}$ is an open cover of $X$. Clearly $\mathcal{C}_{\Omega\times X}^{o'}\subset \mathcal{C}_{\Omega\times X}^{o}$.
 For the measurable subset $\mathcal{E}\subset \Omega\times X$,
we denote the restriction of  $\mathcal{P}_{\Omega\times X}$, $\mathcal{C}_{\Omega\times X}$, $\mathcal{C}_{\Omega\times X}^o$ and $\mathcal{C}_{\Omega\times X}^{o'}$ on $\mathcal{E}$ by $\mathcal{P}_{\mathcal{E}}$, $\mathcal{C}_{\mathcal{E}}$, $\mathcal{C}_{\mathcal{E}}^o$ and $\mathcal{C}_{\mathcal{E}}^{o'}$, respectively. Given two covers $\mathcal{U}$, $\mathcal{V}\in \mathcal{C}_{\Omega\times X}$, $\mathcal{U}$ is said to be {\it finer} than $\mathcal{V}$ (write $\mathcal{U}\succeq \mathcal{V}$) if each element of $\mathcal{U}$ is contained in some element of $\mathcal{V}$. Let $\mathcal{U}\vee\mathcal{V}=\{U\cap V: U\in \mathcal{U}, V\in \mathcal{V}\}$. Given integers $M,$ $N\in \mathbb{N}$ with $M\leq N$ and $\mathcal{U}\in \mathcal{C}_{\Omega\times X}$ or $\mathcal{P}_{\Omega\times X}$, we use the notation $\mathcal{U}_M^N=\bigvee_{n=M}^N\Theta^{-n}\mathcal{U}$.

Given $\mathcal{U}\in \mathcal{C}_{\mathcal{E}}$, Let
\begin{equation*}
N(T,\omega,\mathcal{U},n)=\min\{\# F: F\,\, \text{is the finite subcover of }\,\, \bigvee_{i=0}^{n-1}(T_{\omega}^i)^{-1}\mathcal{U}(\vartheta^i\omega) \,\,  \text{over}\,\, \mathcal{E}_{\omega}    \},
\end{equation*}
where $\# F$ denotes the cardinality of $F$.
Note that $N(T,\omega,\mathcal{U},n)$ is measurable in $\omega$.
Then we can let
\begin{equation}
H(T,\mathcal{U},n)=\int \log N(T,\omega,\mathcal{U},n)dP(\omega).
\end{equation}
Clearly, if there is another cover $\mathcal{V}\succeq \mathcal{U}$ then $H(T,\mathcal{V},n)\geq H(T,\mathcal{U},n)$. In fact, for two covers $\mathcal{U}, \mathcal{V}$ we have $H(T,\mathcal{U}\vee\mathcal{V},n)\leq H(T,\mathcal{U},n)+H(T,\mathcal{V},n)$.

We now proceed by defining the random topological entropy of a cover $\mathcal{U}\in \mathcal{C}_{\mathcal{E}}$ on $\mathcal{E}$. Let $a_n=\log N(T,\omega,\mathcal{U},n)$. It is easy to see that $\{a_n\}$ is a non-negative subadditive sequence, i.e. $a_{n+m}\leq a_n+a_m\circ\vartheta^n$. Then by Kingman's subadditive ergodic theorem one can define the {\it random topological entropy of $\mathcal{U}$ on $\mathcal{E}$} as
\begin{equation*}
h_{\text{top}}(T,\mathcal{U})=\lim_{n\rightarrow\infty}\frac{1}{n}H(T,\mathcal{U},n)=\inf_{n\geq 1}\frac{1}{n}H(T,\mathcal{U},n).
\end{equation*}
Moreover, if $P$ is ergodic then $P$-$a.s.$
\begin{equation}
h_{\text{top}}(T,\mathcal{U})=\lim_{n\rightarrow\infty}\frac{1}{n}\log N(T,\omega,\mathcal{U},n).
\end{equation}
The {\it random topological entropy of $T$ on $\mathcal{E}$} is defined by
\begin{equation}
h_{\text{top}}(T)=\sup_{\mathcal{U}\in \mathcal{C}_{\mathcal{E}}^{o'}}h_{\text{top}}(T,\mathcal{U}).
\end{equation}

Note that the definition of $N(T,\omega,\mathcal{U},n)$
(hence $h_{\text{top}}(T,\mathcal{U})$) above is slightly different from the one of $\pi_T(f)(\omega,\epsilon,n)$ given in \cite{Kifer2001}, which is defined by separated sets. However,
it is easy to see that the random topological entropy $h_{\text{top}}(T)$ defined above is the
same as the random topological pressure for the null function (i.e. the random topological entropy) defined in \cite{Kifer2001}.
If $(\Omega,\mathcal{F},P,\vartheta)$ is a trivial system, the above definition is  the standard topological entropy in the deterministic case.

Let $\mathcal {P}_{P}(\mathcal{E}) = \{\mu \in \mathcal
{P}_P(\Omega \times X) : \mu(\mathcal{E})=1\}$, where
$\mathcal {P}_P(\Omega \times X)$ is the space of
probability measures on $\Omega \times X$ with the marginal
$P$ on $\Omega$. Any $\mu \in \mathcal
{P}_P(\mathcal{E})$ on $\mathcal{E}$ can be disintegrated
as $d \mu(\omega, x)=d \mu_{\omega}(x)d P(\omega)$ (See
\cite{Dudley}), where $\mu_{\omega}$ are regular conditional
probabilities with respect to the $\sigma$-algebra $\mathcal
{F}_{\mathcal{E}}$ formed by all sets $(A\times X)\cap \mathcal{E}$
with $A \in \mathcal{F}$. Let $\mathcal {R}=
\{\mathcal{R}_{i}\}$ be a finite measurable partition of
$\mathcal{E}$, and denote $\mathcal {R}(\omega)=
\{\mathcal{R}_{i}(\omega)\}$, where $\mathcal{R}_{i}(\omega)=\{x\in
\mathcal{E}_{\omega}: (\omega,x)\in \mathcal{R}_{i}\}$ is a
partition of $\mathcal{E}_{\omega}$. The conditional entropy of $\mathcal{R}$ given the $\sigma$-algebra $\mathcal{F}_{\mathcal{E}}$ is defined by
\begin{equation}\label{kifer2.1}
 H_{\mu}(\mathcal{R}\mid \mathcal{F}_{\mathcal{E}})=-\int\sum_{i}\mu(\mathcal{R}_i\mid \mathcal{F}_{\mathcal{E}})\log \mu(\mathcal{R}_i\mid \mathcal{F}_{\mathcal{E}})d\mu=\int H_{\mu_{\omega}}(\mathcal{R}(\omega))dP(\omega),
\end{equation}
where $H_{\mu_{\omega}}(\mathcal{A})$ denotes the usual entropy of a partition $\mathcal{A}$.
Let $\mathcal
{I}_P (\mathcal{E} )$ be the set of $\Theta$-invariant
measures $\mu \in\mathcal {P}_P(\mathcal{E})$. If $\vartheta$ is invertible, then $\mu$ is
$\Theta$-invariant if and only if the disintegrations $\mu_{\omega}$
of $\mu$ satisfy $T_{\omega}\mu_{\omega}=\mu_{\vartheta\omega}\,
P$-a.s. \cite{Arnold}. The {\it random measure-theoretical entropy $h_{\mu}^{(r)}(T)$  with respect to $\mu\in \mathcal{I}_P(\mathcal{E})$ } is defined by the formula
\begin{equation}\label{kifer2.2}
h_{\mu}^{(r)}(T)=\sup_{\mathcal{Q}}h_{\mu}^{(r)}(T,\mathcal{Q}), \,\,\text{where}\,\,
h_{\mu}^{(r)}(T,\mathcal{Q})=\lim_{n\rightarrow\infty}\frac{1}{n}
H_{\mu}(\bigvee_{i=0}^{n-1}(\Theta^i)^{-1}\mathcal{Q}\mid \mathcal{F}_{\mathcal{E}}),
\end{equation}
where the supremum is taken over all finite or countable measurable partitions $\mathcal{Q}=\{\mathcal{Q}_i\}$ of $\mathcal{E}$ with the finite conditional entropy $H_{\mu}(\mathcal{Q}\mid \mathcal{F}_{\mathcal{E}})<\infty$, and the above limit exists in view of subadditivity of conditional entropy (cf. Kifer \cite[Theorem
 II.1.1]{Kifer}).  From equality \eqref{kifer2.1}, it is not hard to see that $h_{\mu}^{(r)}(T,\mathcal{Q})$ has the following fiber expression:
\begin{equation}
h_{\mu}^{(r)}(T,\mathcal{Q})=\lim_{n\rightarrow\infty}\frac{1}{n}\int H_{\mu_{\omega}}(\bigvee_{i=0}^{n-1}(T^i_{\omega})^{-1}\mathcal{Q}(\vartheta^i\omega))dP(\omega).
\end{equation}
Moreover, the resulting entropy remains the same by taking
the supremum  only over partitions $\mathcal{Q}$ of $\mathcal{E}$
into sets $Q_i$ of the form $Q_i=(\Omega\times P_i)\cap\mathcal{E}$,
where $\mathcal{P}=\{P_i\}$ is a partition of $X$ into measurable
sets, so that $Q_i(\omega)=P_i\cap\mathcal{E}_{\omega}$ (See
\cite{Bogen,Bogenthesis,Kifer} for detail).

As in the topological case, when $(\Omega,\mathcal{F},P,\vartheta)$ is a trivial system, the above notion is  the standard metric entropy of $T$ with respect to $\mu$ in the deterministic system. For the classical entropy of measure-theoretical entropy see \cite{Par,Walters},  for the classical theory of topological entropy see \cite{DGS, Walters}, and, for the entropy theory of random dynamical system we refer to \cite{Arnold,Bogen,Kifer,LQ} and the references given there. The relation between the random measure-theoretical and topological entropy is stated as follows.

\begin{theorem}[Bogensch{\"u}tz \cite{Bogen} and Kifer \cite{Kifer2001}]\label{theorem2.1}
Let $T$ be a continuous bundle RDS on $\mathcal{E}$ over $\vartheta$, where $\vartheta $ is invertible. Then $h_{\text{top}}(T)=\sup_{\mu\in \mathcal{I}_P(\mathcal{E})} h_{\mu}^{(r)}(T) $
\end{theorem}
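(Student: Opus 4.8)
The plan is to establish the two inequalities $\sup_{\mu\in\mathcal{I}_P(\mathcal{E})}h_{\mu}^{(r)}(T)\le h_{\text{top}}(T)$ and $h_{\text{top}}(T)\le\sup_{\mu\in\mathcal{I}_P(\mathcal{E})}h_{\mu}^{(r)}(T)$ separately, following the fibrewise adaptation of Misiurewicz's argument \cite{Misiurewicz}. Throughout I would exploit the disintegration $d\mu(\omega,x)=d\mu_{\omega}(x)\,dP(\omega)$ and the fibre formulas for both entropies, reducing every claim to an estimate on the compact fibres $\mathcal{E}_{\omega}$ that is then integrated against $P$, with all $\omega$-dependent objects produced measurably through the selection machinery available for $\omega\mapsto\mathcal{E}_{\omega}$ \cite{Castaing}.

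For the easy inequality, fix $\mu\in\mathcal{I}_P(\mathcal{E})$. By the remark following \eqref{kifer2.2} it suffices to bound $h_{\mu}^{(r)}(T,\mathcal{Q})$ for partitions with atoms $Q_i=(\Omega\times P_i)\cap\mathcal{E}$. Using inner regularity of $\mu_{\omega}$, applied measurably in $\omega$, I would shrink each fibre piece to a compact set and collect the leftover into a single atom of small integrated measure, obtaining a fibre partition $\beta(\omega)$ and an associated open cover $\mathcal{U}\in\mathcal{C}_{\mathcal{E}}^{o}$ whose elements are the unions of the leftover set with the compact pieces. The combinatorial heart is the fibrewise bound that the number of nonempty atoms of $\bigvee_{i=0}^{n-1}(T_{\omega}^i)^{-1}\beta(\vartheta^i\omega)$ is at most $2^{n}N(T,\omega,\mathcal{U},n)$, whence $H_{\mu_{\omega}}(\bigvee_{i=0}^{n-1}(T_{\omega}^i)^{-1}\beta(\vartheta^i\omega))\le n\log 2+\log N(T,\omega,\mathcal{U},n)$ by the elementary inequality that a partition entropy is at most the logarithm of its number of nonempty atoms. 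Integrating in $\omega$, dividing by $n$, and letting $n\to\infty$ gives $h_{\mu}^{(r)}(T,\beta)\le\log 2+h_{\text{top}}(T,\mathcal{U})$; the spurious $\log 2$ is removed by the standard passage to the powers $\Theta^{m}$ with $m\to\infty$, while $h_{\mu}^{(r)}(T,\mathcal{Q})\le h_{\mu}^{(r)}(T,\beta)+H_{\mu}(\mathcal{Q}\mid\beta)$ and the conditional correction vanishes as the leftover measure tends to $0$, yielding $h_{\mu}^{(r)}(T,\mathcal{Q})\le h_{\text{top}}(T)$.

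For the hard inequality I would realize $h_{\text{top}}(T)$ through maximal $(\omega,n,\varepsilon)$-separated subsets $E_{\omega}^{(n)}\subset\mathcal{E}_{\omega}$, which compute the same quantity by the remark comparing the present definition with Kifer's separated-set pressure \cite{Kifer2001}. Choosing these sets measurably via \cite{Castaing}, I would form the fibre measures $\sigma_{\omega}^{(n)}=(\#E_{\omega}^{(n)})^{-1}\sum_{x\in E_{\omega}^{(n)}}\delta_{x}$, assemble $\sigma^{(n)}\in\mathcal{P}_P(\mathcal{E})$, and average along the skew product by $\mu^{(n)}=\frac1n\sum_{k=0}^{n-1}\Theta^{k}_{*}\sigma^{(n)}$; since $\vartheta$ preserves $P$, each $\mu^{(n)}$ keeps the marginal $P$. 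A weak$^{*}$ limit point $\mu$ along a subsequence (compactness from the compact fibres and tightness of the $\Omega$-marginal $P$) is $\Theta$-invariant, so $\mu\in\mathcal{I}_P(\mathcal{E})$. Taking a finite partition $\mathcal{Q}$ whose fibre atoms have diameter below $\varepsilon$ and $\mu$-null boundaries, and writing $\mathcal{Q}_{\omega}^{(n)}=\bigvee_{i=0}^{n-1}(T_{\omega}^i)^{-1}\mathcal{Q}(\vartheta^i\omega)$, the separation of $E_{\omega}^{(n)}$ forces $H_{\sigma_{\omega}^{(n)}}(\mathcal{Q}_{\omega}^{(n)})=\log\#E_{\omega}^{(n)}$; the concavity of entropy transfers a lower bound for a fixed $q$-block from $\sigma^{(n)}$ to the Cesàro average $\mu^{(n)}$, the $\mu$-null boundaries let that bound pass to the weak$^{*}$ limit, and letting $q\to\infty$ then $\varepsilon\to0$ gives $h_{\mu}^{(r)}(T)\ge h_{\text{top}}(T)$.

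I expect the main obstacle to be the hard direction, and within it the passage of the fibrewise entropy estimate through the time-average and the weak$^{*}$ limit: the separated sets, the partition, and the resulting measures must all be built measurably in $\omega$, and the concavity and subadditivity estimates must be integrated over $\Omega$ while keeping the boundary and leftover corrections uniformly small in $\omega$. I would also record the shorter route that the present paper makes available: since $h_{\mu}^{(r)-}(T,\mathcal{U})\le h_{\mu}^{(r)}(T)$ and the supremum of $h_{\mu}^{(r)-}(T,\mathcal{U})$ over $\mathcal{U}\in\mathcal{C}_{\mathcal{E}}^{o'}$ recovers $h_{\mu}^{(r)}(T)$, taking that supremum in the local variational principle $\max_{\mu}h_{\mu}^{(r)-}(T,\mathcal{U})=h_{\text{top}}(T,\mathcal{U})$ and using $h_{\text{top}}(T)=\sup_{\mathcal{U}\in\mathcal{C}_{\mathcal{E}}^{o'}}h_{\text{top}}(T,\mathcal{U})$ recovers Theorem \ref{theorem2.1} directly, which is the reduction the authors allude to in the introduction.
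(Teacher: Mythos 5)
Your proposal is correct in outline, but it is worth noting that the paper never proves Theorem \ref{theorem2.1} directly: it is cited as a known result of Bogensch{\"u}tz and Kifer, and the only argument the paper offers for it is the deduction, recorded after Theorem \ref{theorem2.5}, that taking the supremum over $\mathcal{U}\in\mathcal{C}_{\mathcal{E}}^{o'}$ in the local variational principle together with Lemma \ref{lemma2.3} recovers the global statement. Your last paragraph reproduces exactly that deduction, so on that route you coincide with the paper. Your main development, however, is the genuinely different, self-contained route: the fibrewise Misiurewicz argument (inner regularity plus the $2^{n}$-atom counting bound for the easy inequality; maximal separated sets, equidistributed fibre measures, Ces\`aro averaging along $\Theta$, weak$^{*}$ limits and null-boundary partitions for the hard one), which is essentially the original Bogensch{\"u}tz--Kifer proof. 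The trade-off is real: the direct argument needs only continuity of the $T_{\omega}$ and none of the paper's local machinery, whereas the paper's shortcut is only available for homeomorphic bundle RDS (Theorem \ref{theorem2.5} is stated in that generality) and presupposes the full strength of Theorems \ref{theorem3.6} and \ref{theorem2.5}; conversely, the shortcut is a few lines once those are in hand, while your direct route must discharge nontrivial measurable-selection obligations (measurable choice of the separated sets $E_{\omega}^{(n)}$, of the regularizing compacta, and of a partition with $\int\mu_{\omega}(\partial\mathcal{P}_{\omega})\,dP(\omega)=0$) via \cite{Castaing}, obligations you correctly flag but do not carry out. As a sketch both directions are sound; the equality $H_{\sigma_{\omega}^{(n)}}(\mathcal{Q}_{\omega}^{(n)})=\log\#E_{\omega}^{(n)}$ does follow from separation plus the diameter bound, and choosing $\mathcal{Q}$ after extracting the weak$^{*}$ limit, as you do, is the correct order of quantifiers.
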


\begin{remark}
The classical variational principle follows from Theorem \ref{theorem2.1} by taking $(\Omega,\mathcal{F},P,\vartheta)$ to be the trivial system.
\end{remark}

Following the ideas of Romagnoli \cite{Rom2003} and Huang {\it et al.} \cite{Huang2006}, we define a new notion of  random measure-theoretical entropy for covers, which extends definition \eqref{kifer2.2} to covers. It allows us to give a local version (for a given open cover $\mathcal{U}$) of Theorem \ref{theorem2.1}.
Let $T$ be a homeomorphic bundle RDS on $\mathcal{E}$ over $\vartheta$ and $\mu\in \mathcal{P}_P(\mathcal{E})$.
For $\mathcal{U}\in \mathcal{C}_{\mathcal{E}}$, let
\begin{equation}
H_{\mu}(\mathcal{U}\mid \mathcal{F}_{\mathcal{E}})=\inf_{\mathcal{R}\succeq \mathcal{U}, \mathcal{R}\in \mathcal{P}_{\mathcal{E}}}H_{\mu}(\mathcal{R}\mid \mathcal{F}_{\mathcal{E}}).
\end{equation}
It is not hard to prove that many of the properties of the conditional function $H_{\mu}(\mathcal{R}\mid \mathcal{F}_{\mathcal{E}})$ can be extended to $H_{\mu}(\mathcal{U}\mid \mathcal{F}_{\mathcal{E}})$ from partitions to covers; for details see \cite{Rom2003}.

We need the following basic result to prove Lemma \ref{lemma2.2}. For $\{p_k\}_{k=1}^K\subset(0,1)$ with $\sum_{k=1}^Kp_k\leq 1$, as usual we define $H(p_1,\dots,p_K)=-\sum_{k=1}^Kp_k\log p_k$.
\begin{lemma}
\label{lemma7}
Fix $K\geq 2$. Suppose that $p_1,\dots,p_K\in (0,1)$ with $p_1\leq\dots\leq p_K$ and $\sum_{k=1}^Kp_k\leq 1$. Let $0< \delta_1<p_1$ and suppose that for each $k=2,\dots,K$, $\delta_k\in[0,1-p_k)$ and $\sum_{k=2}^K\delta_k=\delta_1$. Then
$$
H(p_1,\dots,p_K)>H(p_1-\delta_1,p_2+\delta_2,\dots,p_K+\delta_K).
$$
\end{lemma}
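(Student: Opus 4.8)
The plan is to recognize the quantity $H$ as a sum of copies of the one–variable function $\eta(t)=-t\log t$ and to exploit the strict concavity of $\eta$. Writing $H(q_1,\dots,q_K)=\sum_{k=1}^K\eta(q_k)$, the asserted inequality is equivalent to
\[
\eta(p_1)-\eta(p_1-\delta_1)\;>\;\sum_{k=2}^K\bigl(\eta(p_k+\delta_k)-\eta(p_k)\bigr),
\]
i.e. the entropy lost by removing mass $\delta_1$ from the smallest coordinate $p_1$ strictly exceeds the total entropy gained by redistributing that mass among the larger coordinates. Note first that the hypotheses $0<\delta_1<p_1$ and $\delta_k\in[0,1-p_k)$ keep every argument $p_1-\delta_1$ and $p_k+\delta_k$ inside $(0,1)$, the open interval on which $\eta$ is $C^\infty$ with $\eta'(t)=-\log t-1$ and $\eta''(t)=-1/t<0$; in particular $\eta'$ is strictly decreasing on $(0,1)$.

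First I would apply the mean value theorem to each difference. For the left-hand side there is $\xi_1\in(p_1-\delta_1,\,p_1)$ with $\eta(p_1)-\eta(p_1-\delta_1)=\delta_1\,\eta'(\xi_1)$. For each $k\ge 2$ with $\delta_k>0$ there is $\xi_k\in(p_k,\,p_k+\delta_k)$ with $\eta(p_k+\delta_k)-\eta(p_k)=\delta_k\,\eta'(\xi_k)$ (indices with $\delta_k=0$ contribute nothing to either side and may be discarded). The key ordering is then $\xi_1<p_1\le p_k<\xi_k$, where $p_1\le p_k$ is exactly the assumption that $p_1$ is the smallest coordinate; hence $\xi_1<\xi_k$ and, by strict monotonicity of $\eta'$, $\eta'(\xi_1)>\eta'(\xi_k)$ for every contributing $k$.

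To finish, use the constraint $\sum_{k=2}^K\delta_k=\delta_1$ to match the two sides weight-for-weight:
\[
\eta(p_1)-\eta(p_1-\delta_1)=\delta_1\,\eta'(\xi_1)=\sum_{k:\,\delta_k>0}\delta_k\,\eta'(\xi_1)>\sum_{k:\,\delta_k>0}\delta_k\,\eta'(\xi_k)=\sum_{k=2}^K\bigl(\eta(p_k+\delta_k)-\eta(p_k)\bigr),
\]
which is the desired strict inequality; the strictness is legitimate because $\delta_1>0$ forces at least one $\delta_k$ to be positive. An essentially equivalent route is to fix the direction $(\delta_2,\dots,\delta_K)/\delta_1$, set $f(t)=\eta(p_1-t)+\sum_{k\ge2}\eta\!\left(p_k+t\delta_k/\delta_1\right)$ and show $f'(t)<0$ on $(0,\delta_1)$ by the same comparison of derivatives, so that $f(\delta_1)<f(0)$. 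The argument is elementary; the only point demanding care is the bookkeeping that produces a \emph{strict} inequality — specifically ensuring the comparison $\eta'(\xi_1)>\eta'(\xi_k)$ is strict (which needs $\xi_1<\xi_k$, hence the strict inequalities $\xi_1<p_1$ and $p_k<\xi_k$) and restricting attention to the indices with $\delta_k>0$.
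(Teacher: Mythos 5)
The paper states this lemma without proof --- it is quoted as a ``basic result'' (it comes from Romagnoli's work) --- so there is no in-paper argument to compare yours against. Your proof is correct and complete. The decomposition $H=\sum_k\eta(q_k)$ with $\eta(t)=-t\log t$, the mean value theorem giving $\xi_1\in(p_1-\delta_1,p_1)$ and $\xi_k\in(p_k,p_k+\delta_k)$, and the strict monotonicity of $\eta'(t)=-\log t-1$ combine exactly as you say: the ordering $\xi_1<p_1\le p_k<\xi_k$ uses precisely the hypothesis that $p_1$ is the smallest coordinate, and the weight-matching $\sum_{k\ge2}\delta_k=\delta_1$ with at least one $\delta_k>0$ delivers the strict inequality. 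You also correctly verify that all arguments stay in $(0,1)$, which is the only domain issue. One small observation: the hypothesis $\sum_k p_k\le 1$ is not actually used in your argument (nor is it needed for the inequality itself); it only serves to keep the tuples inside the domain on which the paper defines $H$, and indeed the perturbed tuple has the same total mass as the original one.
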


\begin{lemma}
\label{lemma2.2}
Let $T$ be a homeomorphic bundle RDS on $\mathcal{E}$ over $\vartheta$ and $\mu\in \mathcal{P}_P(\mathcal{E})$. If $\mathcal{U}$, $\mathcal{V}\in \mathcal{C}_{\mathcal{E}}$, then
\renewcommand{\theenumi}{(\arabic{enumi})}
\begin{enumerate}
\item $0\leq H_{\mu}(\mathcal{U}\mid \mathcal{F}_{\mathcal{E}})\leq \log N(\mathcal{U})$,  where $N(\mathcal{U})=\min\{\#F: \mathcal{F}\subset\mathcal{U}, \,\bigcup_{F\in \mathcal{F}}\supset \mathcal{E}\}$;
\item If $\mathcal{U}\succeq \mathcal{V}$, then  $H_{\mu}(\mathcal{U}\mid \mathcal{F}_{\mathcal{E}})\geq H_{\mu}(\mathcal{V}\mid \mathcal{F}_{\mathcal{E}})$;
\item $H_{\mu}(\mathcal{U}\vee \mathcal{V}\mid \mathcal{F}_{\mathcal{E}})\leq H_{\mu}(\mathcal{U}\mid \mathcal{F}_{\mathcal{E}})+H_{\mu}(\mathcal{V}\mid \mathcal{F}_{\mathcal{E}})$;\label{lemma2.2p3}
\item If $\vartheta$ is invertible, then $H_{\mu}(\Theta^{-1}\mathcal{U}\mid \mathcal{F}_{\mathcal{E}} )=H_{\Theta\mu}(\mathcal{U}\mid \mathcal{F}_{\mathcal{E}} )$.\label{lemma2.2p4}
\end{enumerate}
\end{lemma}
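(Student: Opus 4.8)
The plan is to deduce all four statements from the corresponding properties of the partition entropy $H_{\mu}(\mathcal{R}\mid\mathcal{F}_{\mathcal{E}})$, which are available through the fiber expression $H_{\mu}(\mathcal{R}\mid\mathcal{F}_{\mathcal{E}})=\int H_{\mu_{\omega}}(\mathcal{R}(\omega))\,dP(\omega)$ from \eqref{kifer2.1} together with the classical theory of the finite-partition entropy $H_{\mu_{\omega}}$. The unifying point is that $H_{\mu}(\mathcal{U}\mid\mathcal{F}_{\mathcal{E}})$ is an infimum over the index family $\{\mathcal{R}\in\mathcal{P}_{\mathcal{E}}:\mathcal{R}\succeq\mathcal{U}\}$, so each inequality for covers follows by comparing these families and applying the known partition estimate to a suitable refining partition. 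The only genuinely new computation occurs in item \ref{lemma2.2p4}, where the disintegration of $\Theta\mu$ must be identified.

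For item (1), nonnegativity is immediate since every $H_{\mu}(\mathcal{R}\mid\mathcal{F}_{\mathcal{E}})\ge 0$. For the upper bound I would take a subfamily $\{U_1,\dots,U_N\}\subset\mathcal{U}$ with $N=N(\mathcal{U})$ that covers $\mathcal{E}$ and disjointify it, setting $R_j=(U_j\setminus\bigcup_{i<j}U_i)\cap\mathcal{E}$; the resulting $\mathcal{R}=\{R_1,\dots,R_N\}\in\mathcal{P}_{\mathcal{E}}$ has $R_j\subseteq U_j$, so $\mathcal{R}\succeq\mathcal{U}$, and at most $N$ cells, giving $H_{\mu}(\mathcal{U}\mid\mathcal{F}_{\mathcal{E}})\le H_{\mu}(\mathcal{R}\mid\mathcal{F}_{\mathcal{E}})=\int H_{\mu_{\omega}}(\mathcal{R}(\omega))\,dP(\omega)\le\log N$. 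For item (2), transitivity of $\succeq$ yields the inclusion $\{\mathcal{R}\succeq\mathcal{U}\}\subseteq\{\mathcal{R}\succeq\mathcal{V}\}$ of admissible partitions, and an infimum over the smaller family is the larger quantity. For item \ref{lemma2.2p3}, given $\ep>0$ I would choose $\mathcal{R}\succeq\mathcal{U}$ and $\mathcal{S}\succeq\mathcal{V}$ within $\ep$ of the respective infima; then $\mathcal{R}\vee\mathcal{S}\succeq\mathcal{U}\vee\mathcal{V}$, and the partition subadditivity $H_{\mu}(\mathcal{R}\vee\mathcal{S}\mid\mathcal{F}_{\mathcal{E}})\le H_{\mu}(\mathcal{R}\mid\mathcal{F}_{\mathcal{E}})+H_{\mu}(\mathcal{S}\mid\mathcal{F}_{\mathcal{E}})$, which descends from the fiberwise inequality for $H_{\mu_{\omega}}$, gives the claim as $\ep\to 0$.

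Item \ref{lemma2.2p4} carries the real content, and I would first establish the partition version $H_{\mu}(\Theta^{-1}\mathcal{R}\mid\mathcal{F}_{\mathcal{E}})=H_{\Theta\mu}(\mathcal{R}\mid\mathcal{F}_{\mathcal{E}})$. Computing sections gives $(\Theta^{-1}\mathcal{R})(\omega)=(T_{\omega})^{-1}\mathcal{R}(\vartheta\omega)$, whence $H_{\mu}(\Theta^{-1}\mathcal{R}\mid\mathcal{F}_{\mathcal{E}})=\int H_{\mu_{\omega}}\bigl((T_{\omega})^{-1}\mathcal{R}(\vartheta\omega)\bigr)\,dP(\omega)$. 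Since $T_{\omega}$ is a homeomorphism, $\mu_{\omega}((T_{\omega})^{-1}A)=(T_{\omega}\mu_{\omega})(A)$, and the disintegration of $\Theta\mu$ is $(\Theta\mu)_{\vartheta\omega}=T_{\omega}\mu_{\omega}$, which I would verify by testing against functions and using that $\vartheta$ preserves $P$; thus $H_{\mu_{\omega}}\bigl((T_{\omega})^{-1}\mathcal{R}(\vartheta\omega)\bigr)=H_{(\Theta\mu)_{\vartheta\omega}}(\mathcal{R}(\vartheta\omega))$. A change of variables $\omega\mapsto\vartheta\omega$, legitimate because $\vartheta$ is invertible and $P$-preserving, converts $\int H_{(\Theta\mu)_{\vartheta\omega}}(\mathcal{R}(\vartheta\omega))\,dP(\omega)$ into $H_{\Theta\mu}(\mathcal{R}\mid\mathcal{F}_{\mathcal{E}})$. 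To pass to covers, I would use that $\Theta$ is a bimeasurable bijection of $\mathcal{E}$, with $\Theta^{-1}(\omega,y)=(\vartheta^{-1}\omega,(T_{\vartheta^{-1}\omega})^{-1}y)$, so that $\mathcal{R}\mapsto\Theta^{-1}\mathcal{R}$ is a bijection of $\mathcal{P}_{\mathcal{E}}$ preserving $\succeq$ and carrying $\{\mathcal{R}\succeq\mathcal{U}\}$ onto $\{\mathcal{R}'\succeq\Theta^{-1}\mathcal{U}\}$; taking the infimum and applying the partition identity yields item \ref{lemma2.2p4}.

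The main obstacle is the bookkeeping in item \ref{lemma2.2p4}: pinning down the disintegration identity $(\Theta\mu)_{\vartheta\omega}=T_{\omega}\mu_{\omega}$ and the correct direction of the change of variables, and confirming that $\Theta^{-1}$ respects both the class $\mathcal{P}_{\mathcal{E}}$ and the refinement order so that the two infima range over genuinely matching families. By contrast, items (1)--\ref{lemma2.2p3} are formal consequences of the infimum definition combined with the already-known fiberwise properties of $H_{\mu_{\omega}}$.
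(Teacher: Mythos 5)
Your proofs of parts (1)--(3) are exactly the formal infimum manipulations the paper has in mind (it dismisses these as obvious), and your part (4) is correct in the stated setting, but it takes a genuinely different route from the paper's. The paper only uses the easy containment $\{\Theta^{-1}\mathcal{R}:\mathcal{R}\succeq\mathcal{U}\}\subseteq\{\mathcal{R}':\mathcal{R}'\succeq\Theta^{-1}\mathcal{U}\}$ to get $H_{\mu}(\Theta^{-1}\mathcal{U}\mid\mathcal{F}_{\mathcal{E}})\leq H_{\Theta\mu}(\mathcal{U}\mid\mathcal{F}_{\mathcal{E}})$, and then, following Romagnoli, treats the reverse inequality as the hard step: given an arbitrary $\mathcal{R}\succeq\Theta^{-1}\mathcal{U}$ it recursively redistributes mass (via the entropy-monotonicity Lemma \ref{lemma7}) to build a chain $\mathcal{R}=\mathcal{R}_0,\mathcal{R}_1,\dots,\mathcal{R}_K=\Theta^{-1}\mathcal{Q}$ with $\mathcal{Q}\succeq\mathcal{U}$ and non-increasing conditional entropy, never pushing a set forward under $\Theta$. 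You instead observe that, since $T$ is a homeomorphic bundle RDS and $\vartheta$ is invertible, $\Theta$ is a bijection of $\mathcal{E}$, so $\mathcal{R}\mapsto\Theta^{-1}\mathcal{R}$ is a bijection between $\{\mathcal{Q}\succeq\mathcal{U}\}$ and $\{\mathcal{R}'\succeq\Theta^{-1}\mathcal{U}\}$ and the two infima coincide term by term; combined with your careful derivation of the partition identity $(\Theta\mu)_{\vartheta\omega}=T_{\omega}\mu_{\omega}$ (which the paper uses silently), this collapses the proof to a change of index set. Your argument buys considerable brevity; the paper's buys generality, since the recursive construction survives when $T_{\omega}$ is merely continuous and surjective, which is why Romagnoli needed it in the first place. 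The one point you should make explicit is that $\Theta\mathcal{R}'$ is again a \emph{measurable} partition of $\mathcal{E}$ (forward images under the measurable bijection $\Theta$ are measurable here because $(\Omega,\mathcal{F},P)$ is a complete Lebesgue space and the fibers are compact metric); the paper itself takes such forward images for granted elsewhere, e.g. with $\Theta^{i}\mathcal{Q}_n$ in Lemma \ref{lemma2.3}, so this is a gap of justification rather than of substance.
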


\begin{proof}
Part (1), (2) and (3) are obvious.

We now prove part (4). We follow the arguments applied in  \cite{Rom2003}.
Since
\begin{align*}
H_{\mu}(\Theta^{-1}\mathcal{U}\mid \mathcal{F}_{\mathcal{E}})
&
\leq \inf_{\mathcal{R}\in \mathcal{P}_{\mathcal{E}}, \mathcal{R}\succeq \mathcal{U}\cap \mathcal{E}}H_{\mu}(\Theta^{-1}\mathcal{R}\mid \mathcal{F}_{\mathcal{E}})=\inf_{\mathcal{R}\in \mathcal{P}_{\mathcal{E}}, \mathcal{R}\succeq \mathcal{U}\cap \mathcal{E}}H_{\Theta\mu}(\mathcal{R}\mid \mathcal{F}_{\mathcal{E}}),
\end{align*}
then $H_{\mu}(\Theta^{-1}\mathcal{U}\mid \mathcal{F}_{\mathcal{E}})\leq H_{\Theta\mu}(\mathcal{U}\mid \mathcal{F}_{\mathcal{E}} )$.

We now prove the opposite inequality.
Let $\mathcal{U}\in \mathcal{C}_{\mathcal{E}}$. For each $\mathcal{R}\in \mathcal{P}_{\mathcal{E}}$ with $\mathcal{R}\succeq \Theta^{-1}\mathcal{U}$, we recursively construct a $\mathcal{Q}\succeq \mathcal{U}$ such that $H_{\mu}(\mathcal{R}\mid \mathcal{F}_{\mathcal{E}})\geq  H_{\Theta\mu}(\mathcal{Q}\mid \mathcal{F}_{\mathcal{E}} )$.

Let $\mathcal{U}=\{U_1,\dots,U_M\}$ and $\mathcal{R}=\{R_1,\dots,R_K\}\succeq \Theta^{-1}\mathcal{U}$
with $\emptyset \neq R_k\subseteq \Theta^{-1}U_{j_k}$, $j_k\in\{1,\dots,M\}$, for each $k\in\{1,\dots,K\}$. We may assume that $j_k\neq j_l$ if $k\neq l$, since the partition obtained by replacing $R_k\cup R_l$ is coarser than $\mathcal{R}$ and still finer than $\Theta^{-1}\mathcal{U}$. Notice that $\Theta^{-1}U_{j_1}\backslash \bigcup_{l=2}^K\Theta^{-1}U_{j_l}\subseteq R_1$. For each $k=1,\dots,K$ we define $p_k=\mu(R_k\mid \mathcal{F}_{\mathcal{E}})$ and $p_k(\omega)=\mu_{\omega}(R_k(\omega))$. Then
$$\sum_{k=1}^Kp_k=\sum_{k=1}^K \mu(R_k\mid \mathcal{F}_{\mathcal{E}})= \sum_{k=1}^Kp_k(\omega)=\mu_{\omega}(R_k(\omega))=1,\,\, P\text{-a.s.}\,\, \omega,$$
and
$
H_{\mu_{\omega}}(\mathcal{R}(\omega))= H(p_1(\omega ),\dots, p_K(\omega)) \,\, P\text{-a.s.}\,\, \omega.
$
 By exchanging indices if necessary we assume $p_1\leq p_2\leq \dots\leq p_k$. Let us define
\begin{align*}
\delta_1&=\mu(R_1\mid\mathcal{F}_{\mathcal{E}})-\Theta\mu(U_{j_1}\backslash\bigcup_{l=2}^K U_{j_l}\mid \mathcal{F}_{\mathcal{E}})\\
&=\mu(R_1\mid\mathcal{F}_{\mathcal{E}})-\mu(\Theta^{-1}(U_{j_1}\backslash\bigcup_{l=2}^K  U_{j_l})\mid \Theta^{-1}(\mathcal{F}_{\mathcal{E}}))
\end{align*}
Then for $P$-a.s. $\omega$,
\begin{equation*}
\delta_1(\omega)=\mu_{\omega}(R_1(\omega))-T_{\omega}\mu_{\omega}(U_{j_1}(\vartheta\omega)\backslash \bigcup_{l=2}^KU_{j_l}(\vartheta\omega))\geq 0.
\end{equation*}

Define $B^1_1=U_{j_1}\backslash \bigcup_{l=2}^KU_{j_l}$ and $R_2^1=R_2\cup (\Theta^{-1}U_{j_2}\cap (R_1\backslash \Theta^{-1}B_1^1))$. For $k=3,\dots,K$, let
\begin{equation*}
R_k^1=R_k\cup\big( \Theta^{-1}U_{j_k}\cap (R_1\backslash \Theta^{-1}B_1^1)\cap (\bigcup_{l=2}^{k-1}R_l^1)^c  \big).
\end{equation*}
Then for each $\omega\in\Omega$,
$B^1_1(\vartheta\omega)=U_{j_1}(\vartheta\omega)\backslash \bigcup_{l=2}^KU_{j_l}(\vartheta\omega)$ and $R_2^1(\omega)=R_2(\omega)\cup (T_{\omega}^{-1}U_{j_2}(\vartheta\omega)\cap (R_1(\omega)\backslash T_{\omega}^{-1}B_1^1(\vartheta\omega)))$. For $k=3,\dots,K$,
\begin{equation*}
R_k^1(\omega)=R_k(\omega)\cup\big( T_{\omega}^{-1}U_{j_k}(\vartheta\omega)\cap (R_1(\omega)\backslash T_{\omega}^{-1}B_1^1(\vartheta\omega))\cap (\bigcup_{l=2}^{k-1}R_l^1(\omega))^c  \big),
\end{equation*}
where $(\bigcup_{l=2}^{k-1}R_l^1(\omega))^c=\mathcal{E}_{\omega}\backslash \bigcup_{l=2}^{k-1}R_l^1(\omega)$.

Define $\mathcal{R}_1=\{\Theta^{-1}B_1^1,R_2^1,\dots,R_K^1\}$. It is clear that $\mathcal{R}_1\succeq \Theta^{-1}\mathcal{U}$. If $\delta_1=0$ then for $P$-a.s. $\omega\in\Omega$, $H_{\mu_{\omega}}(\mathcal{R}(\omega))=H_{\mu_{\omega}}(\mathcal{R}_1(\omega))$, it follows that $H_{\mu}(\mathcal{R}\mid \mathcal{F}_{\mathcal{E}})=H_{\mu}(\mathcal{R}_1\mid \mathcal{F}_{\mathcal{E}})$. If $\delta_1>0$, then for $P$-a.s. $\omega\in\Omega$, $\delta_1(\omega)>0$, Using Lemma \ref{lemma7} with $(p_1(\omega),\dots,p_K(\omega))$ and $\delta_k(\omega)=\mu_{\omega}(R_k^1(\omega))-\mu_{\omega}(R_k(\omega))$ for every $k\in\{2,\dots,K\}$, we have $H_{\mu_{\omega}}(\mathcal{R}(\omega))\geq H_{\mu_{\omega}}(\mathcal{R}_1(\omega))$, $P$-a.s. $\omega\in \Omega$. Then $H_{\mu}(\mathcal{R}\mid \mathcal{F}_{\mathcal{E}})\geq H_{\mu}(\mathcal{R}_1\mid \mathcal{F}_{\mathcal{E}})$.

Inductively, for each $n=2,\dots,K$, we construct
$$
\mathcal{R}_n=\{\Theta^{-1}B_1^n,\dots, \Theta^{-1}B_n^n,R_{n+1}^n,\dots,R_K^n\}\succeq \Theta^{-1}\mathcal{U},
$$
which satisfies that $H_{\mu}(\mathcal{R}_{n+1}\mid \mathcal{F}_{\mathcal{E}})\leq H_{\mu}(\mathcal{R}_n\mid \mathcal{F}_{\mathcal{E}})$. Let us give the construction of $\mathcal{R}_{n+1}$ given $\mathcal{R}_n$.
For each $k\in \{1,\dots,n\}$ define $B_k^{n+1}=B_k^n$. Let $B_{n+1}^{n+1}=U_{j_{n+1}}\backslash \bigcup_{l=n+2}^KU_{j_l}$ and $R_{n+2}^{n+1}=R_{n+2}^n\cup (\Theta^{-1}U_{j_{n+2}}\cap (R_{n+1}^n\backslash \Theta^{-1}B_{n+1}^{n+1}))$. For each $k\in \{n+3,\dots,K\}$, define
$$
R_k^{n+1}=R_k^n\cap \big(\Theta^{-1}U_{j_k}\cap (R_{n+1}^n\backslash \Theta^{-1}B_{n+1}^{n+1})\cap (\bigcup_{l=2}^{k-1}R_l^{n+1})^c    \big).
$$
As before, for each $k\in \{1,\dots,k-n\}$ define $p_k=\mu(R_{n+k}^n\mid\mathcal{F}_{\mathcal{E}})$ and $\delta_k=\mu(R_{n+k}^{n+1}\mid\mathcal{F}_{\mathcal{E}})-\Theta\mu (R_{n+k}^n\mid\mathcal{F}_{\mathcal{E}}))$. Let $\delta_1=\mu(R_{n+1}^n\mid\mathcal{F}_{\mathcal{E}}))
-\Theta\mu(B_{n+1}^{n+1})\mid\mathcal{F}_{\mathcal{E}})$. By exchanging indices if necessary we assume that $p_1\leq \dots\leq p_{K-n}$. Using Lemma \ref{lemma7}, we prove that $H_{\mu}(\mathcal{R}_{n+1}\mid\mathcal{F}_{\mathcal{E}})\leq H_{\mu}(\mathcal{R}_n)\mid\mathcal{F}_{\mathcal{E}})$.

We have that $H_{\mu}(\mathcal{R}\mid\mathcal{F}_{\mathcal{E}})\geq H_{\mu}(\mathcal{R}_K\mid\mathcal{F}_{\mathcal{E}})$ and $\mathcal{R}_K=\Theta^{-1}\mathcal{Q}$ with $\mathcal{Q}\succeq\mathcal{U}$. Then
\begin{align*}
H_{\mu}(\Theta^{-1}\mathcal{U}\mid\mathcal{F}_{\mathcal{E}})&=\inf_{\mathcal{R}\succeq \Theta^{-1}\mathcal{U}}H_{\mu}(\mathcal{R}\mid\mathcal{F}_{\mathcal{E}})\\
&\geq \inf_{\mathcal{Q}\succeq \mathcal{U}}H_{\mu}(\Theta^{-1}\mathcal{Q}\mid\mathcal{F}_{\mathcal{E}})\\
&=\inf_{\mathcal{Q}\succeq \mathcal{U}}H_{\Theta\mu}(\mathcal{Q}\mid\mathcal{F}_{\mathcal{E}}) \quad (\text{since }\,\, \vartheta\,\, \text{is invertible})\\
&=H_{\Theta\mu}(\mathcal{U}\mid\mathcal{F}_{\mathcal{E}}).
\end{align*}
This complete the argument of Lemma \ref{lemma2.2}.

\end{proof}

Let $\mu\in \mathcal{I}_P(\mathcal{E})$. It follows from parts \ref{lemma2.2p3} and \ref{lemma2.2p4} of Lemma \ref{lemma2.2} that $H_{\mu}(\mathcal{U}_0^{n-1}\mid \mathcal{F}_{\mathcal{E}})$ is a sub-additive function of $n\in \mathbb{N}$.
Let $\vartheta$ be invertible. We  may define the $\mu^-$ random measurable theoretic entropy of $\mathcal{U}$ with respect to $\mathcal{F}_{\mathcal{E}}$ as
\begin{equation*}
h_{\mu}^{(r)-}(T,\mathcal{U})=\lim_{n\rightarrow \infty}\frac{1}{n}H_{\mu}(\mathcal{U}_0^{n-1}\mid \mathcal{F}_{\mathcal{E}})=\inf_{n\geq 1}\frac{1}{n}H_{\mu}(\mathcal{U}_0^{n-1}\mid \mathcal{F}_{\mathcal{E}}).
\end{equation*}
Since for each partition $\mathcal{U}\in \mathcal{P}_{\mathcal{E}}$, $h_{\mu}^{(r)-}(T,\mathcal{U})=h_{\mu}^{(r)}(T,\mathcal{U})$, then $h_{\mu}^{(r)}(T)=\sup_{\mathcal{U}\in \mathcal{C}_{\mathcal{E}}}h_{\mu}^{(r)-}(T,\mathcal{U})$.
The following lemma gives some stronger results.

\begin{lemma}
\label{lemma2.3}
Let $T$ be a homeomorphic bundle RDS on $\mathcal{E}$ over $\vartheta$ and $\mu \in \mathcal{I}_P(\mathcal{E})$, where $\vartheta $ is invertible. Then
\renewcommand{\theenumi}{(\arabic{enumi})}
\begin{enumerate}
\item $h_{\mu}^{(r)}(T)=\sup_{\mathcal{U}\in \mathcal{C}_{\mathcal{E}}^o}h_{\mu}^{(r)-}(T,\mathcal{U})$;
\item $h_{\mu}^{(r)-}(T,\mathcal{U})\leq h_{\text{top}}(T,\mathcal{U})$ for each $\mathcal{U}\in \mathcal{C}_{\mathcal{E}}$.
\end{enumerate}
\end{lemma}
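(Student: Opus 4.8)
The plan is to treat the two parts separately. Throughout I would work fiberwise, using the disintegration identity $H_{\mu}(\,\cdot\mid\mathcal{F}_{\mathcal{E}})=\int H_{\mu_{\omega}}(\,\cdot\,(\omega))\,dP(\omega)$, so that the deterministic entropy identities (subadditivity under $\vee$, the chain rule, monotonicity under refinement, concavity of $H$) hold after integration. I will also use the two facts already available: that $h_{\mu}^{(r)}(T)=\sup_{\mathcal{U}\in\mathcal{C}_{\mathcal{E}}}h_{\mu}^{(r)-}(T,\mathcal{U})$ (established just before the lemma), and that by the reduction recorded after \eqref{kifer2.2} one may compute $h_{\mu}^{(r)}(T)=\sup_{\mathcal{P}}h_{\mu}^{(r)}(T,\mathcal{Q}_{\mathcal{P}})$, where $\mathcal{P}=\{P_1,\dots,P_k\}$ ranges over finite measurable partitions of $X$ and $\mathcal{Q}_{\mathcal{P}}=\{(\Omega\times P_i)\cap\mathcal{E}\}$.

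\textbf{Part (1).} Since $\mathcal{C}_{\mathcal{E}}^o\subset\mathcal{C}_{\mathcal{E}}$, the inequality $\sup_{\mathcal{U}\in\mathcal{C}_{\mathcal{E}}^o}h_{\mu}^{(r)-}(T,\mathcal{U})\le h_{\mu}^{(r)}(T)$ is immediate. For the reverse inequality I would fix $\mathcal{Q}_{\mathcal{P}}$ and approximate $\mathcal{P}$ by an open cover. Let $\nu$ be the $X$-marginal of $\mu$, $\nu(A)=\mu((\Omega\times A)\cap\mathcal{E})$; by regularity of $\nu$ on the compact metric space $X$, pick compact $C_i\subset P_i$ with $\nu(P_i\setminus C_i)$ small, set $C_0=X\setminus\bigcup_{i=1}^k C_i$ (so $\nu(C_0)$ is small) and $U_i=X\setminus\bigcup_{j\ne i}C_j$. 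Then $\{U_i\}_{i=1}^k$ covers $X$ and $\mathcal{U}=\{(\Omega\times U_i)\cap\mathcal{E}\}\in\mathcal{C}_{\mathcal{E}}^{o'}\subset\mathcal{C}_{\mathcal{E}}^o$. The crucial estimate is that for \emph{every} partition $\mathcal{R}\succeq\mathcal{U}_0^{n-1}$ one has, uniformly in $\mathcal{R}$, a bound $H_{\mu}\big((\mathcal{Q}_{\mathcal{P}})_0^{n-1}\mid\mathcal{R}\vee\mathcal{F}_{\mathcal{E}}\big)\le n\,\rho$ with $\rho\to0$ as $\nu(C_0)\to0$: indeed an atom of $\mathcal{R}(\omega)$ lies in some $\bigcap_{i=0}^{n-1}(T_{\omega}^i)^{-1}U_{s_i}$, and on it $T_{\omega}^i x\in U_{s_i}$ forces $T_{\omega}^i x\in C_{s_i}\cup C_0$, so the time-$i$ $\mathcal{P}$-coordinate is determined off the set $(T_{\omega}^i)^{-1}C_0$. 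Splitting by $(T_{\omega}^i)^{-1}\{C_0,C_0^c\}$ gives a per-coordinate bound $H_{\mu_{\omega}}((T_{\omega}^i)^{-1}\mathcal{P}\mid\mathcal{R}(\omega))\le H_{\mu_{\omega}}((T_{\omega}^i)^{-1}\{C_0,C_0^c\})+\mu_{\omega}((T_{\omega}^i)^{-1}C_0)\log k$; summing over the $n$ coordinates and integrating, Jensen's inequality and the $\Theta$-invariance identity $\int\mu_{\omega}((T_{\omega}^i)^{-1}C_0)\,dP=\nu(C_0)$ yield the claim. Combining with the chain rule
\begin{equation*}
H_{\mu}\big((\mathcal{Q}_{\mathcal{P}})_0^{n-1}\mid\mathcal{F}_{\mathcal{E}}\big)\le H_{\mu}(\mathcal{R}\mid\mathcal{F}_{\mathcal{E}})+H_{\mu}\big((\mathcal{Q}_{\mathcal{P}})_0^{n-1}\mid\mathcal{R}\vee\mathcal{F}_{\mathcal{E}}\big),
\end{equation*}
taking the infimum over $\mathcal{R}$, dividing by $n$ and letting $n\to\infty$ gives $h_{\mu}^{(r)}(T,\mathcal{Q}_{\mathcal{P}})\le h_{\mu}^{(r)-}(T,\mathcal{U})+\rho$. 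Letting $\nu(C_0)\to0$ and taking the supremum over $\mathcal{P}$ yields $h_{\mu}^{(r)}(T)\le\sup_{\mathcal{U}\in\mathcal{C}_{\mathcal{E}}^o}h_{\mu}^{(r)-}(T,\mathcal{U})$.

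\textbf{Part (2).} Write $\mathcal{U}=\{U_1,\dots,U_m\}$. For each fixed $n$ and a.e.\ $\omega$, I would choose a subcover of $\bigvee_{i=0}^{n-1}(T_{\omega}^i)^{-1}\mathcal{U}(\vartheta^i\omega)$ over $\mathcal{E}_{\omega}$ of minimal cardinality $N(T,\omega,\mathcal{U},n)$ and disjointify it, in a fixed order on the finite index set $\{1,\dots,m\}^n$, into a partition $\mathcal{R}(\omega)$ of $\mathcal{E}_{\omega}$ with at most $N(T,\omega,\mathcal{U},n)$ atoms and with $\mathcal{R}\succeq\mathcal{U}_0^{n-1}$ (each atom sits inside one string-indexed set $U_{\mathbf{s}}$). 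Because there are finitely many candidate subcovers $S$ and each set $\{\omega:\bigcup_{\mathbf{s}\in S}U_{\mathbf{s}}(\omega)\supseteq\mathcal{E}_{\omega}\}$ is measurable (this is precisely the measurability of $N(T,\omega,\mathcal{U},n)$ already noted), selecting the first minimal such $S(\omega)$ in a fixed enumeration makes $\omega\mapsto S(\omega)$, and hence $\mathcal{R}$, measurable, so $\mathcal{R}\in\mathcal{P}_{\mathcal{E}}$. Since a partition into at most $N$ atoms has entropy at most $\log N$,
\begin{equation*}
H_{\mu}(\mathcal{U}_0^{n-1}\mid\mathcal{F}_{\mathcal{E}})\le H_{\mu}(\mathcal{R}\mid\mathcal{F}_{\mathcal{E}})=\int H_{\mu_{\omega}}(\mathcal{R}(\omega))\,dP\le\int\log N(T,\omega,\mathcal{U},n)\,dP=H(T,\mathcal{U},n).
\end{equation*}
Dividing by $n$ and letting $n\to\infty$ gives $h_{\mu}^{(r)-}(T,\mathcal{U})\le h_{\text{top}}(T,\mathcal{U})$.

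I expect the main obstacle to be the measurability bookkeeping forced by the random setting rather than the entropy arithmetic. In Part (1) one must verify that the approximating cover and every conditional-entropy identity descend correctly through the disintegration $\mu=\int\mu_{\omega}\,dP$ and that $\Theta$-invariance is applied to the correct pushforwards; in Part (2) the genuinely delicate point is producing a \emph{measurable} family $\omega\mapsto S(\omega)$ of minimal subcovers and a measurable refining partition $\mathcal{R}\in\mathcal{P}_{\mathcal{E}}$. The finiteness of the index set $\{1,\dots,m\}^n$ together with the already-noted measurability of $N(T,\omega,\mathcal{U},n)$ is exactly what makes this selection possible and keeps the argument from leaving the measurable category.
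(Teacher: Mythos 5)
Your proof is correct and follows essentially the same route as the paper's: part (1) via inner regularity, choosing compact $C_i\subset P_i$ and the open cover $U_i=X\setminus\bigcup_{j\neq i}C_j$ so that any partition finer than the cover almost determines $\mathcal{P}$, and part (2) via a measurable selection of minimal subcovers (possible because the index set is finite and $N(T,\omega,\mathcal{U},n)$ is measurable) followed by disjointification. The only differences are cosmetic: you give an explicit per-coordinate conditional-entropy estimate where the paper invokes the standard $\delta$--$\epsilon$ continuity of $H_{\mu_{\omega}}(\mathcal{P}\mid\cdot)$ cited from Walters, and in part (2) you disjointify at level $n$ directly where the paper does it at level $1$ and then applies the result to $\mathcal{U}_0^{n-1}$.
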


\begin{proof}
We follow the idea of Huang Wen {\it et al.} \cite{Huang2006}.

(1) For $\mathcal{U}\in \mathcal{C}_{\mathcal{E}}^o$, let $\mathcal{R}$ be the partition generated by $\mathcal{U}$. Then $\mathcal{R}\succeq \mathcal{U}$, and hence $h_{\mu}^{(r)}(T)\geq h_{\mu}^{(r)}(T,\mathcal{R})\geq h_{\mu}^{(r)-}(T,\mathcal{U})$. This implies that $h_{\mu}^{(r)}(T)\geq \sup_{\mathcal{U}\in \mathcal{C}_{\mathcal{E}}^o}h_{\mu}^{(r)-}(T,\mathcal{U})$.

Conversely, for a partition $\mathcal{R}\in \mathcal{P}_{\mathcal{E}}$, $\mathcal{R}=\{R_i\}_{i=1}^k$
with $R_i=(\Omega \times P_i)\cap \mathcal{E}$, where $P=\{P_i\}$ is the partition of $X$ into measurable sets such that $R_i(\omega)=P_i\cap \mathcal{E}_{\omega}$, and $\epsilon >0$, we have the following claim.
\begin{claim}
There exists an open cover $\mathcal{U}\in \mathcal{C}_{\mathcal{E}}^o$ with $K$ elements such that for any finite measurable partition $\mathcal{Q}\in \mathcal{P}_{\mathcal{E}}$ with $\mathcal{Q}=(\Omega \times Q_i)\cap \mathcal{E}$, $\mathcal{Q}_i(\omega)=Q_i\cap \mathcal{E}_{\omega}$, finer than $\mathcal{U}$ as a cover, $H_{\mu}(\mathcal{R}\mid \mathcal{Q}\vee \mathcal{F}_{\mathcal{E}})<\epsilon$.
\end{claim}

\begin{proof}[Proof the Claim]
Let $\mathcal{P}=\{P_i\}_{i=1}^k$ be a finite partition of $ X$. Denote by $\mathcal{P}(\omega)=\{P_i(\omega)\}_{i=1}^k$, $P_i(\omega)=P_i\cap \mathcal{E}_{\omega}$, $1\leq i\leq k$, the corresponding partition of $\mathcal{E}_{\omega}$. It is well known that  there exists $\delta(\omega)>0$ such that if $\beta=\{B_i\}_{i=1}^k$ is a measurable partition of $X$ and $\sum_{i=1}^k\mu_{\omega}(P_i\bigtriangleup B_i)<\delta(\omega)$  then $H_{\mu_{\omega}}(\mathcal{P}\mid \beta )\leq \epsilon$ (See \cite{Walters}). Since $\mu_{\omega}$ is regular, we can find compact subsets $Q_i\subset P_i$ with
$$
\mu(\Omega\times P_i\backslash  \Omega \times Q_i)=\int \mu_{\omega}(P_i(\omega)\backslash Q_i(\omega))dP(\omega)<\delta/2k^2, i=1, \dots, k,
$$
where $\delta=\int\delta(\omega)dP(\omega)$.
Let $Q_0=X\backslash \bigcup_{i=1}^k Q_i$. Then $\mu(\Omega\times Q_0)<\delta/2k$. Let $U_i=\Omega \times B_i$, where $B_i=Q_0\cup Q_i$, $i=1,\dots, k$, is open in $X$. Then $\mathcal{U}=\{U_i\}_{i=1}^k\cap \mathcal{E}\in \mathcal{C}_{\mathcal{E}}^o$.

For any partition $\mathcal{S}\succeq \mathcal{U}$, $\mathcal{S}=\{S_i\}\in \mathcal{P}_{\mathcal{E}}$ with $S_i=(\Omega\times C_i)\cap \mathcal{E}$, where $\mathcal{C}=\{C_i\}$ is a partition of $X$, we can find a partition $\mathcal{S}'=\{S'_i\}_{i=1}^k$ satisfying that $C'_i\subset B_i$, $S'_i\subset U_i$, $i=1,\dots, k$ and $\mathcal{S}\succeq \mathcal{S}'$, where $S_i'=(\Omega\times C_i')\cap\mathcal{E}$. Hence $H_{\mu}(\mathcal{R}\mid \mathcal{S}\vee \mathcal{F}_{\mathcal{E}})\leq H_{\mu}(\mathcal{R}\mid \mathcal{S}'\vee \mathcal{F}_{\mathcal{E}})$. Note that $B_i\supset C'_i\supset X\backslash \bigcup_{j\neq i}B_j=Q_i$, and thus $U_i\supset S'_i\supset (\Omega\times X)\backslash \bigcup_{j\neq i}(\Omega \times B_i)=\Omega\times Q_i$. One has
$$
\mu_{\omega}(C'_i\bigtriangleup P_i)\leq \mu_{\omega}(P_i\backslash Q_i)+\mu_{\omega}(Q_0)\leq \delta(\omega)/2k+\delta(\omega)/2k=\delta(\omega)/k.
$$
Hence $\sum_{i=1}^k\mu_{\omega}(C'_i\bigtriangleup P_i)<\delta(\omega)$ and $H_{\mu_{\omega}}(\mathcal{R}(\omega)\mid \mathcal{S}'(\omega))\leq \epsilon$. Then
\begin{align*}
H_{\mu}(\mathcal{R}\mid \mathcal{S}'\vee \mathcal{F}_{\mathcal{E}})
&=H_{\mu}(\mathcal{R}\vee \mathcal{S}'\mid \mathcal{F}_{\mathcal{E}})-H_{\mu}(\mathcal{S}'\mid \mathcal{F}_{\mathcal{E}})\\
&=\int H_{\mu_{\omega}}(\mathcal{R}\vee\mathcal{S}')(\omega)dP(\omega)-\int H_{\mu_{\omega}}(\mathcal{S}'(\omega))dP(\omega)\\
&=\int (H_{\mu_{\omega}}(\mathcal{R}\vee\mathcal{S}')(\omega)- H_{\mu_{\omega}}(\mathcal{S}'(\omega)))dP(\omega)\\
&=\int H_{\mu_{\omega}}(\mathcal{R}(\omega)\mid \mathcal{S}'(\omega))dP(\omega)\leq \epsilon.
\end{align*}
Thus $H_{\mu}(\mathcal{R}\mid \mathcal{S}\vee \mathcal{F}_{\mathcal{E}})<\epsilon$. This ends the proof the claim.
\end{proof}

Now for $n\in \mathbb{N}$ and a finite measurable partition $\mathcal{Q}_n\succeq \mathcal{U}_{0}^{n-1}$, since $\Theta^i\mathcal{Q}_n\succeq \mathcal{U}$, $0\leq i\leq n-1$, one has
\begin{align*}
H_{\mu}(\mathcal{R}_0^{n-1}\mid \mathcal{F}_{\mathcal{E}})
&\leq H_{\mu}(\mathcal{Q}_n\mid \mathcal{F}_{\mathcal{E}})+H_{\mu}(\mathcal{R}_0^{n-1}\mid \mathcal{Q}_n\vee \mathcal{F}_{\mathcal{E}})\\
& \leq H_{\mu}(\mathcal{Q}_n\mid \mathcal{F}_{\mathcal{E}})+\sum_{i=0}^{n-1}H_{\mu}(\mathcal{R}\mid \Theta^i\mathcal{Q}_n\vee \mathcal{F}_{\mathcal{E}})\\
& \leq H_{\mu}(\mathcal{Q}_n\mid \mathcal{F}_{\mathcal{E}})+n\epsilon.
\end{align*}

Hence
\begin{align*}
h_{\mu}^{(r)}(T,\mathcal{R})
&=\lim_{n\rightarrow \infty}\frac{1}{n}H_{\mu}(\mathcal{R}_0^{n-1}\mid \mathcal{F}_{\mathcal{E}})\\
&\leq \lim_{n\rightarrow \infty}\frac{1}{n}H_{\mu}(\mathcal{U}_0^{n-1}\mid \mathcal{F}_{\mathcal{E}})+\epsilon\\
&=h_{\mu}^{(r)-}(T,\mathcal{U})+\epsilon
\leq \sup_{\mathcal{U}\in \mathcal{C}^o_{\mathcal{E}}}h_{\mu}^{(r)-}(T,\mathcal{U})+\epsilon.
\end{align*}
Since $\mathcal{R}$ and $\epsilon$ are arbitrary, one has $h_{\mu}^{(r)}(T)\leq \sup_{\mathcal{U}\in \mathcal{C}^o_{\mathcal{E}}}h_{\mu}^{(r)-}(T,\mathcal{U})$. This ends the proof part (1).

(2) It is enough to show that for each $\mathcal{U}=\{U_i\}_{i=1}^k\in \mathcal{C}_{\mathcal{E}}$ there exists $\mathcal{R}\in \mathcal{P}_{\mathcal{E}}$ finer than $\mathcal{U}$ such that  $H_{\mu}(\mathcal{R}\mid \mathcal{F}_{\mathcal{E}})\leq H(T,\mathcal{U},1)$ which implies  $H_{\mu}(\mathcal{U}\mid \mathcal{F}_{\mathcal{E}})\leq H(T,\mathcal{U},1)$.

Let $d\mu(\omega,x)=d\mu_{\omega}(x)dP(\omega)$ $P$-a.s. be the integration of $\mu$. For each $\omega\in \Omega$, there exists $I_{\omega}\subset \{1,\dots,k\}$ with cardinality $N(T,\omega,\mathcal{U},1)$ such that $\bigcup_{i\in I_{\omega}}U_i(\omega)\supset \mathcal{E}_{\omega}$. Since $\mathcal{U}$ is finite, we can find $\omega_1,\dots,\omega_s\in \Omega$ such that for each $\omega\in\Omega$, $I_{\omega}=I_{\omega_i}$ for some $i\in \{1,\dots,s\}$. For $i=1,\dots,s$, define $D_i=\{\omega\in \Omega: \mu_{\omega}(\bigcup_{j\in I_{\omega_i}}U_j(\omega))=1\}$. Then $D_i$ is measurable for $i=1,\dots,s$ and $P(\bigcup_{i=1}^sD_i)=1$.

Fix $i\in\{1,\dots,s\}$. Assume that $I_{\omega_i}=\{k_1<\cdots<k_{t_i}\}$, where $t_i=N(T,\omega_i,\mathcal{U},1)$. Take $\mathcal{R}^i=\{W^i_1,\dots, W_{t_i}^i\}$, where $W_1^i=U_{k_1}$, $W^i_2=U_{k_2}\backslash U_{k_1}$, $\cdots$, $W_{t_i}^i=U_{k_{t_i}}\backslash \bigcup_{j=1}^{t_i-1}U_{k_j}$. For any $\omega\in D_i$, since $\mu_{\omega}(\bigcup_{j=1}^{t_i}W_j^i(\omega_i))=\mu_{\omega}(\bigcup_{j\in I_{\omega_i}}U_j(\omega))=1$. $\mathcal{R}^i(\omega_i)$ can be considered as  a finite partition of $\mathcal{E}_{\omega}$ (mod $\mu_{\omega}$) and
$$
H_{\mu_{\omega}}(\mathcal{R}^i(\omega_i))\leq \log N(T,\omega_i,\mathcal{U},1).
$$

Let $C_1=D_1$, $C_i=D_i\backslash \bigcup_{j=1}^{i-1}D_j$, $i=1,\dots,s$ and $A=\mathcal{E}\backslash (\bigcup_{i=1}^s(E_i\cap \bigcup_{j=1}^{t_i}W_j^i))$, where $E_i=\{(\omega,x)\in \mathcal{E}: \omega\in C_i, x\in \mathcal{E}_{\omega}\}$. Set $A_l=A\cap (U_l\backslash \bigcup_{j=1}^{l-1}U_j)$, $l=1,\dots, k$. Then put
$$
\mathcal{R}=\{E_1\cap W_1^1, \dots, E_1\cap W_{t_1}^1, \dots, E_s\cap W_1^s, E_s\cap W_{t_s}^s, A_1\cap\mathcal{E}, \dots, A_k\cap \mathcal{E}\}.
$$
Clearly, $\mathcal{R}$ is a finite measurable partition of $\mathcal{E}$ finer than $\mathcal{U}$. Now for $\omega\in C_i$, $i=1, \dots, s$, we have
$H_{\mu_{\omega}}(\mathcal{R}(\omega))=H_{\mu_{\omega}}(\mathcal{R}^i(\omega_i))$. Hence
\begin{align*}
H_{\mu}(&\mathcal{R}\mid \mathcal{F}_{\mathcal{E}})=\int H_{\mu_{\omega}}(\mathcal{R}(\omega))dP(\omega)\\
&=\int_{\bigcup_{i=1}^sC_i}H_{\mu_{\omega}}(\mathcal{R}(\omega))dP(\omega)\leq \int \log N(T,\omega,\mathcal{U},1)dP(\omega)=H(T,\mathcal{U},1).
\end{align*}
This ends the proof of Lemma \ref{lemma2.3}.

\end{proof}

\begin{remark}
The constructed $\mathcal{U}$ in fact belongs to $\mathcal{C}_{\mathcal{E}}^{o'}$ in the proof of Lemma \ref{lemma2.3} part (1). Then, more precisely, $h_{\mu}^{(r)}(T)=\sup_{\mathcal{C}_{\mathcal{E}}^{o'}}h_{\mu}^{(r)-}(T,\mathcal{U})=\sup_{\mathcal{C}_{\mathcal{E}}^{o}}h_{\mu}^{(r)-}(T,\mathcal{U})$.
When $(\Omega,\mathcal{F},P,\vartheta)$ is a trivial system, the inequality $h_{\mu}^{(r)-}(T,\mathcal{U})\leq h_{\text{top}}(T,\mathcal{U})$ can be easily obtained by the fact $H_{\mu}(\alpha)\leq \log \# \alpha$ for $\alpha\in \mathcal{P}_X$, where $\mathcal{P}_X$ is the set of partitions of $X$.
\end{remark}

Let $\mathcal{P}_X$ be the set of  partitions of $X$ and $\mathcal{M}(X)$ be the set of all Borel probability measures on $X$.
For any $\alpha\in \mathcal{P}_X$ and $\theta\in \mathcal{M}(X)$,
we define $|\mathcal{A}|_{\theta}=\# \{A\in \alpha: \theta(A)>0\}$. Then in the proof of Lemma \ref{lemma2.3} part (2), in fact we have obtained the following fact.

\begin{corollary}
Let $\mu\in \mathcal{P}_P(\mathcal{E})$ and $d\mu(\omega,x)=d\mu_{\omega}(x)dP(\omega)$ P-a.s. be the disintegration of $\mu$. Then for any $\mathcal{U}\in \mathcal{C}_{\mathcal{E}}$, there exists $\mathcal{R}\in \mathcal{P}_{\mathcal{E}}$ such that $\mathcal{R}\succeq \mathcal{U}$ and $\mid \mathcal{R}(\omega)\mid_{\mu_{\omega}}\leq  \sup_{\omega\in\Omega}N(T,\omega,\mathcal{U},1)$ for $P$-a.s. $\omega\in \Omega$.
\end{corollary}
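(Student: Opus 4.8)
The plan is to reuse \emph{verbatim} the partition $\mathcal{R}$ already built in the proof of Lemma~\ref{lemma2.3}(2) and merely count its atoms of positive fiber-measure. Recall that there we chose finitely many points $\omega_1,\dots,\omega_s$ so that every $\omega$ satisfies $I_\omega=I_{\omega_i}$ for some $i$, set $t_i=N(T,\omega_i,\mathcal{U},1)=\#I_{\omega_i}$, and defined the pairwise disjoint sets $C_1,\dots,C_s$ with $\bigcup_i C_i=\Omega$, together with $E_i=\{(\omega,x)\in\mathcal{E}:\omega\in C_i\}$ and the residual set $A=\mathcal{E}\setminus\bigcup_{i=1}^s\big(E_i\cap\bigcup_{j=1}^{t_i}W_j^i\big)$. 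The partition $\mathcal{R}$ consists of the atoms $E_i\cap W_j^i$ ($1\le i\le s$, $1\le j\le t_i$) and the atoms $A_l\cap\mathcal{E}$ ($1\le l\le k$), and it was shown there that $\mathcal{R}\in\mathcal{P}_{\mathcal{E}}$ and $\mathcal{R}\succeq\mathcal{U}$. Thus the first two requirements of the corollary come for free, and only the fiberwise count is new.

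I would carry out the count at a fixed $\omega$. Since $\{C_i\}$ is a partition of $\Omega$, $\omega$ lies in exactly one $C_i$; for $i'\ne i$ the fiber $E_{i'}(\omega)$ is empty, so every atom $E_{i'}\cap W_j^{i'}$ contributes the empty set over $\omega$ and may be discarded. For the surviving index $i$ I would use that $\omega\in C_i\subseteq D_i$, which by the definition of $D_i$ gives $\mu_\omega\big(\bigcup_{j\in I_{\omega_i}}U_j(\omega)\big)=1$, equivalently $\mu_\omega\big(\bigcup_{j=1}^{t_i}W_j^i(\omega)\big)=1$ because $\bigcup_{j=1}^{t_i}W_j^i$ is just the disjointification of $\bigcup_{j\in I_{\omega_i}}U_j$. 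Over $\omega\in C_i$ one has $E_i(\omega)=\mathcal{E}_\omega$ and the other blocks vanish, so $A(\omega)=\mathcal{E}_\omega\setminus\bigcup_{j=1}^{t_i}W_j^i(\omega)$ is $\mu_\omega$-null; hence each atom $A_l(\omega)$ is $\mu_\omega$-null. Consequently the only atoms of $\mathcal{R}(\omega)$ that can carry positive $\mu_\omega$-mass are $E_i(\omega)\cap W_j^i(\omega)$, $j=1,\dots,t_i$, which yields $\mid\mathcal{R}(\omega)\mid_{\mu_\omega}\le t_i$.

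It remains to convert this into the stated bound. Since $t_i=N(T,\omega_i,\mathcal{U},1)$ with $\omega_i\in\Omega$, trivially $t_i\le\sup_{\omega\in\Omega}N(T,\omega,\mathcal{U},1)$, so $\mid\mathcal{R}(\omega)\mid_{\mu_\omega}\le\sup_{\omega\in\Omega}N(T,\omega,\mathcal{U},1)$ for every $\omega\in C_i$ and every $i$; as $\bigcup_i C_i=\Omega$, the inequality holds for all (hence $P$-a.s.) $\omega$. I do not expect a genuine obstacle: everything is bookkeeping already implicit in Lemma~\ref{lemma2.3}. The one point requiring care is the single-fiber reduction, namely making explicit that disjointness of the $C_i$ annihilates the atoms coming from other blocks while membership in $D_i$ forces the residual atoms $A_l(\omega)$ to be $\mu_\omega$-null, so that the count collapses from the a priori $\sum_i t_i+k$ atoms of $\mathcal{R}$ to at most $t_i$ positive-measure atoms over each $\omega$.
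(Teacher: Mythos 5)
Your proposal is correct and follows exactly the route the paper intends: the corollary is, as the paper itself notes, just a re-reading of the partition $\mathcal{R}$ built in the proof of Lemma \ref{lemma2.3}(2), and your fiberwise count (discarding the blocks $E_{i'}\cap W_j^{i'}$ with $i'\neq i$ and observing that membership in $D_i$ makes the residual atoms $A_l(\omega)$ $\mu_\omega$-null) is precisely the bookkeeping that yields $\mid\mathcal{R}(\omega)\mid_{\mu_\omega}\le t_i\le\sup_{\omega}N(T,\omega,\mathcal{U},1)$. The only cosmetic slip is asserting $\bigcup_i C_i=\Omega$; the paper only guarantees $P(\bigcup_i D_i)=1$, but since the conclusion is stated $P$-a.s.\ this changes nothing.
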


It follows from Lemma \ref{lemma2.3} that the inequality stated in Theorem \ref{theorem2.1} holds true. In fact,  we have the following Theorem \ref{theorem2.5}. The proof of this result will be completed in Section \ref{section4}. In next section, we will introduce another new notion of random measure-theoretical entropy for covers, and prove a variational inequality for this new entropy. Using the inequality we can prove Theorem \ref{theorem2.5} in Section \ref{section4}.

\begin{theorem}
\label{theorem2.5}
Let $T$ be a homeomophic bundle RDS on $\mathcal{E}$ over $\vartheta$. Then for each $\mathcal{U}\in \mathcal{C}_{\mathcal{E}}^{o'}$ there exists $\mu\in \mathcal{I}_P(\mathcal{E})$ such that $h_{\text{top}}(T,\mathcal{U})=h_{\mu}^{(r)-}(T,\mathcal{U})$.
\end{theorem}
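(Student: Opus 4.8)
The inequality $h_{\mu}^{(r)-}(T,\mathcal{U})\le h_{\text{top}}(T,\mathcal{U})$ is already in hand from part~(2) of Lemma~\ref{lemma2.3}, and it holds for every $\mu\in\mathcal{I}_P(\mathcal{E})$; hence $\sup_{\mu}h_{\mu}^{(r)-}(T,\mathcal{U})\le h_{\text{top}}(T,\mathcal{U})$. The whole task is to exhibit one invariant measure realizing the reverse inequality. Bounding $h^{(r)-}$ from below directly is unpleasant, because it is assembled from an infimum over partitions refining $\mathcal{U}_0^{n-1}$ at every scale $n$, so the plan is to route the lower bound through the larger quantity $h_{\mu}^{(r)+}(T,\mathcal{U})$ introduced in Section~\ref{section3} and then descend back to $h^{(r)-}$ through the ergodic decomposition over $(\Omega,\mathcal{F},P,\vartheta)$.

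The first ingredient is the variational inequality of Section~\ref{section3}: for the given $\mathcal{U}\in\mathcal{C}_{\mathcal{E}}^{o'}$ there is a measure $\nu\in\mathcal{I}_P(\mathcal{E})$ with $h_{\nu}^{(r)+}(T,\mathcal{U})\ge h_{\text{top}}(T,\mathcal{U})$. I would obtain this by a Misiurewicz-type construction adapted to the bundle setting: for each $n$ pick fiberwise an almost optimal subcover of $\bigvee_{i=0}^{n-1}(T_{\omega}^i)^{-1}\mathcal{U}(\vartheta^i\omega)$ witnessing $N(T,\omega,\mathcal{U},n)$, place an atomic measure on points realizing these subcovers, average the resulting measures under $\Theta$ over a window of length $n$, and extract a weak-$*$ limit. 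Compactness of the fibers $\mathcal{E}_{\omega}$ together with the fixed marginal $P$ keeps the limit inside $\mathcal{I}_P(\mathcal{E})$, and the defining formula for $h^{(r)+}$ is arranged so that the growth rate $\tfrac1n H(T,\mathcal{U},n)$ survives the passage to the limit.

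The second ingredient is the collection of relations between $h^{(r)+}$ and $h^{(r)-}$ proved in Section~\ref{section4}. Concretely I would use: (i) the affineness of $\mu\mapsto h_{\mu}^{(r)-}(T,\mathcal{U})$ under the ergodic decomposition $\nu=\int m\,d\tau(m)$; and (ii) the coincidence $h_{m}^{(r)+}(T,\mathcal{U})=h_{m}^{(r)-}(T,\mathcal{U})$ for ergodic $m$, together with an upper estimate $h_{\nu}^{(r)+}(T,\mathcal{U})\le\sup_{m}h_{m}^{(r)-}(T,\mathcal{U})$ obtained by comparing the $h^{(r)+}$ of $\nu$ against its ergodic components. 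Feeding the variational inequality into these relations produces an ergodic $m$ with $h_{m}^{(r)-}(T,\mathcal{U})\ge h_{\text{top}}(T,\mathcal{U})$; combining this with part~(2) of Lemma~\ref{lemma2.3} applied to $m$ gives $h_{m}^{(r)-}(T,\mathcal{U})=h_{\text{top}}(T,\mathcal{U})$, so $\mu=m$ is the sought measure.

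The main obstacle is the bridge in the second ingredient. Since both local entropies are only manifestly concave in $\mu$, affineness of $h^{(r)-}$ is a genuine theorem whose hard half is the estimate $h_{\nu}^{(r)-}(T,\mathcal{U})\le\int h_{m}^{(r)-}(T,\mathcal{U})\,d\tau(m)$, and it must be proved through the conditional entropy $H_{\mu}(\cdot\mid\mathcal{F}_{\mathcal{E}})$ via the disintegrations $\mu_{\omega}$, interchanging the infimum over refining partitions with the integral over components. The coincidence on ergodic measures is the subtlest point: in the deterministic theory it rests on a Shannon--McMillan--Breiman argument, and here that argument has to be carried out fiberwise and controlled uniformly in $\omega$, which is where the bulk of the technical effort of Section~\ref{section4} will be concentrated. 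Upper semicontinuity of $h^{(r)-}$, again from Section~\ref{section4}, together with affineness, additionally guarantees that the supremum over $\mathcal{I}_P(\mathcal{E})$ is attained at an ergodic measure, matching the existence assertion in the statement.
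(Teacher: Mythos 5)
Your first ingredient is exactly the paper's: Theorem \ref{theorem3.6} supplies $\nu\in\mathcal{I}_P(\mathcal{E})$ with $h_{\nu}^{(r)+}(T,\mathcal{U})\geq h_{\text{top}}(T,\mathcal{U})$, and the remaining issue is indeed how to descend from $h^{(r)+}$ to $h^{(r)-}$. But your second ingredient is where the proof breaks down: the tools you invoke --- affineness of $\mu\mapsto h_{\mu}^{(r)-}(T,\mathcal{U})$ over the ergodic decomposition, the coincidence $h_{m}^{(r)+}(T,\mathcal{U})=h_{m}^{(r)-}(T,\mathcal{U})$ for ergodic $m$, the comparison $h_{\nu}^{(r)+}\leq\sup_m h_{m}^{(r)-}$, and upper semicontinuity of $h^{(r)-}$ --- are nowhere established in Section \ref{section4} or anywhere else in the paper. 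The identity $h^{+}_{\mu}(\mathcal{U})=h^{-}_{\mu}(\mathcal{U})$ is, even in the deterministic setting, a deep later theorem (the ``local variational relation'' of Huang and Ye), proved by a weighted combinatorial covering lemma rather than by a fiberwise Shannon--McMillan--Breiman argument; asserting that it ``has to be carried out fiberwise'' is not a proof, and no random analogue is available here. There is also a structural obstruction specific to the bundle setting: the paper does not assume $P$ ergodic, and when $P$ is not ergodic the $\Theta$-ergodic components of $\nu$ have marginals equal to the ergodic components of $P$, hence leave the class $\mathcal{I}_P(\mathcal{E})$ entirely; your decomposition $\nu=\int m\,d\tau(m)$ therefore does not even produce candidate measures of the required type.

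The paper's actual bridge is much more elementary and avoids ergodic decomposition altogether. It rests on Lemma \ref{lemma4.1}(3), the power formula $h_{\mu}^{(r)-}(T,\mathcal{U})=\lim_{n\to\infty}\tfrac1n h_{\mu}^{(r)+}(T^n,\mathcal{U}_0^{n-1})$, which follows from the trivial inequalities $h^{(r)-}\leq h^{(r)+}\leq H_{\mu}(\cdot\mid\mathcal{F}_{\mathcal{E}})$ applied to powers. One then needs a \emph{single} $\mu$ satisfying $\tfrac1k h_{\mu}^{(r)+}(T^k,\mathcal{U}_0^{k-1})\geq h_{\text{top}}(T,\mathcal{U})$ for \emph{every} $k$ simultaneously. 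This is arranged (after reducing to $X$ zero-dimensional via Lemma \ref{lem2}) by applying Theorem \ref{theorem3.6} to each power $T^k$ with the cover $\mathcal{U}_0^{k-1}$, averaging the resulting $\Theta^k$-invariant measure under $\Theta$ to land in $\mathcal{I}_P(\mathcal{E})$, and observing that the sets $M(k)$ of measures satisfying the $k$-th inequality are nonempty, closed (by the upper semicontinuity of $\mu\mapsto h_{\mu}^{(r)}(T^k,\cdot)$ on clopen partitions, Lemma \ref{lemma3.4}), and nested along divisibility, so that $\bigcap_k M(k)\neq\emptyset$ by compactness of $\mathcal{I}_P(\mathcal{E})$. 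Any $\mu$ in this intersection finishes the proof via Lemma \ref{lemma4.1}(3) and Lemma \ref{lemma2.3}(2). To repair your argument you would either have to prove the random local variational relation $h^{(r)+}=h^{(r)-}$ (a substantial theorem in its own right, with the marginal problem above to resolve), or replace your second ingredient by the power/compactness mechanism just described.
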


Theorem \ref{theorem2.5} together with Lemma \ref{lemma2.3} implies that $h_{\text{top}}(T,\mathcal{U})\leq  \sup_{\nu}h_{\nu}^{(r)}(T)$. By taking the supremum over all covers $\mathcal{U}\in \mathcal{C}_{\mathcal{E}}^{o'}$ in Theorem \ref{theorem2.5}, we can easily get Theorem \ref{theorem2.1} in the homeomorphic case. Moreover, Theorem \ref{theorem2.5} also shows that if there exists $\mathcal{U}\in \mathcal{C}_{\mathcal{E}}^{o'}$ such that $h_{\text{top}}(T,\mathcal{U})=h_{\text{top}}(T)$, then there exists $\mu\in \mathcal{I}_P(\mathcal{E})$ such that $h_{\mu}^{(r)}(T)=h_{\text{top}}(T)$.

\section{A variational inequality of random entropy for $h_{\mu}^{(r)+}$}\label{section3}

Let $T$ be a homeomorphic bundle RDS on $\mathcal{E}$ over $\vartheta$. Given $\mu\in \mathcal{I}_P(\mathcal{E})$ and $\mathcal{U}\in \mathcal{C}_{\mathcal{E}}$ we define
\begin{equation*}
h_{\mu}^{(r)+}(T,\mathcal{U})=\inf_{\mathcal{Q}\in \mathcal{P}_{\mathcal{E}},\mathcal{Q}\succeq \mathcal{U}}h_{\mu}^{(r)}(T,\mathcal{Q}).
\end{equation*}
Obviously, $h_{\mu}^{(r)+}(T,\mathcal{U})\geq h_{\mu}^{(r)-}(T,\mathcal{U})$. By Lemma \ref{lemma2.3} part (1),  $h_{\mu}^{(r)}(T)=\sup_{\mathcal{U}\in \mathcal{C}_{\mathcal{E}}^o}h_{\mu}^{(r)+}(T,\mathcal{U})$ also holds.
For $\mathcal{U}\in \mathcal{C}_{\mathcal{E}}^{o'} $,
it is not difficult to verify (See e.g. \cite{Bogen,Kifer}) that the infimum above can only over partitions $\mathcal{Q}$ of $\mathcal{E}$ into sets $Q_i$ of the form $Q_i=(\Omega\times P_i)\cap \mathcal{E}$, where $\mathcal{P}=\{P_i\}$ is a partition of $X$ into measurable subsets, so that $Q_i(\omega)=P_i\cap \mathcal{E}_{\omega}$.
In this section, we will show that, for  given $\mathcal{U}\in \mathcal{C}_{\mathcal{E}}^{o'} $, there always exists $\mu \in \mathcal{I}_P(\mathcal{E})$ such that $h_{\mu}^{(r)+}(T,\mathcal{U})\geq h_{\text{top}}(T,\mathcal{U})$.

First we recall the definition of factor for two RDS (See \cite{Liupeidong2005}).
\begin{definition}
 Given two continuous bundle RDS $T_{i}$ on $\mathcal{E}_i\subset \Omega \times X_i$ over $\vartheta$, $i=1,2$.
 $T_2$ is called a factor of $T_1$ if there exists a family of subjective continuous maps $\{\pi_{\omega}:(\mathcal{E}_{1})_{\omega}\rightarrow (\mathcal{E}_{2})_{\omega}\}$ such that for $P$-a.s. $\omega$, $ (T_{2})_{\omega}\circ \pi_{\omega}=\pi_{\vartheta\omega}\circ (T_{1})_{\omega}$ and $\pi:(\omega,x)\rightarrow (\omega, \pi_{\omega}x)$ constitutes a measurable map from $\mathcal{E}_1$ to $\mathcal{E}_2$.
\end{definition}

The following lemma is an obvious fact.
\begin{lemma}
\label{lemma3.1}
Let $\psi: \mathcal{G}\rightarrow \mathcal{E}$ be a factor map between continuous bundle RDS $T_1$ on $\mathcal{E}$ and $T_2$ on $\mathcal{G}$ over $\vartheta$, where $\mathcal{E}\subset \Omega \times X$, $\mathcal{G}\subset \Omega \times Z$. If $\mu \in \mathcal{I}_P(\mathcal{G})$, $\nu=\psi \mu$, $\mathcal{R}\in \mathcal{P}_{\mathcal{E}}$ and $\mathcal{U}\in \mathcal{C}^o_{\mathcal{E}}$, then
\renewcommand{\theenumi}{(\arabic{enumi})}
\begin{enumerate}
\item $h_{\text{top}}(T_2,\psi^{-1}(\mathcal{U}))=h_{\text{top}}(T_1,\mathcal{U})$;
\item $h^{(r)}_{\mu}(T_2,\psi^{-1}(\mathcal{R}))=h^{(r)}_{\nu}(T_1,\mathcal{R})$.
\end{enumerate}
\end{lemma}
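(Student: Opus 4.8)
The plan is to push everything down to the fibers and exploit the fiberwise expressions for $h_{\text{top}}$ and $h_\mu^{(r)}$ recorded in Section \ref{section2}. Write $\psi_\omega:\mathcal{G}_\omega\to\mathcal{E}_\omega$ for the fiber maps of $\psi$; by hypothesis each $\psi_\omega$ is continuous and surjective, and they intertwine the two systems, $\psi_{\vartheta\omega}\circ (T_2)_\omega=(T_1)_\omega\circ\psi_\omega$ for $P$-a.s.\ $\omega$. An immediate induction (using $(T_i)_\omega^{j+1}=(T_i)_{\vartheta^j\omega}\circ(T_i)_\omega^{j}$) gives the cocycle identity $\psi_{\vartheta^j\omega}\circ (T_2)_\omega^{j}=(T_1)_\omega^{j}\circ\psi_\omega$. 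The single fact that drives both parts is that, for any family $\mathcal{A}$ of subsets of $\mathcal{E}$, one has fiberwise
\begin{equation*}
\psi_\omega^{-1}\Big(\bigvee_{i=0}^{n-1}((T_1)_\omega^i)^{-1}\mathcal{A}(\vartheta^i\omega)\Big)=\bigvee_{i=0}^{n-1}((T_2)_\omega^i)^{-1}(\psi^{-1}\mathcal{A})(\vartheta^i\omega),
\end{equation*}
which follows from the cocycle identity together with the fact that taking preimages commutes with finite intersections. I would verify this first, as it reduces each iterated object for $T_2$ to the $\psi_\omega$-preimage of the corresponding iterated object for $T_1$.

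For part (1), I would apply the displayed identity with $\mathcal{A}=\mathcal{U}$ and then observe that, because $\psi_\omega$ is \emph{surjective}, a subfamily of $\bigvee_{i=0}^{n-1}((T_1)_\omega^i)^{-1}\mathcal{U}(\vartheta^i\omega)$ covers $\mathcal{E}_\omega$ if and only if the subfamily of its $\psi_\omega$-preimages covers $\mathcal{G}_\omega$ (the forward direction is automatic; the converse lifts a point of $\mathcal{E}_\omega$ along $\psi_\omega$). Surjectivity also makes $A\mapsto\psi_\omega^{-1}(A)$ injective on subsets of $\mathcal{E}_\omega$ (since $\psi_\omega(\psi_\omega^{-1}(A))=A$), so minimal subcovers correspond with equal cardinality. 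Hence $N(T_2,\omega,\psi^{-1}\mathcal{U},n)=N(T_1,\omega,\mathcal{U},n)$ for $P$-a.s.\ $\omega$, which gives $H(T_2,\psi^{-1}\mathcal{U},n)=H(T_1,\mathcal{U},n)$ after integrating in $\omega$, and part (1) follows by dividing by $n$ and letting $n\to\infty$.

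For part (2), I first record that $\nu=\psi\mu$ lies in $\mathcal{I}_P(\mathcal{E})$: the intertwining relation gives $\psi\circ\Theta=\Theta\circ\psi$ between the two skew products, so $\Theta$-invariance of $\mu$ passes to $\nu$, and $\psi$ fixes the $\Omega$-coordinate, so $\nu$ keeps the marginal $P$. The key measure-theoretic point is that the disintegration of $\nu$ is $\nu_\omega=(\psi_\omega)_*\mu_\omega$ for $P$-a.s.\ $\omega$, which I would get from uniqueness of disintegration, since the change of variables $\int f\,d\nu=\int f\circ\psi\,d\mu$ shows $(\psi_\omega)_*\mu_\omega\,dP(\omega)$ integrates to $\psi\mu=\nu$ with marginal $P$. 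Consequently each atom satisfies $\mu_\omega(\psi_\omega^{-1}R)=\nu_\omega(R)$, and since the entropy of a partition depends only on the measures of its atoms, $H_{\mu_\omega}(\psi_\omega^{-1}\mathcal{S})=H_{\nu_\omega}(\mathcal{S})$ for every finite partition $\mathcal{S}$ of $\mathcal{E}_\omega$. Applying this with $\mathcal{S}=\bigvee_{i=0}^{n-1}((T_1)_\omega^i)^{-1}\mathcal{R}(\vartheta^i\omega)$ and invoking the displayed identity with $\mathcal{A}=\mathcal{R}$ converts the fiber entropy of the $T_2$-join into that of the $T_1$-join; integrating in $\omega$, dividing by $n$, and taking the limit yields part (2).

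The genuinely load-bearing steps, and hence the only places I expect to spend care, are the cocycle/preimage identity and the disintegration formula $\nu_\omega=(\psi_\omega)_*\mu_\omega$; everything after that is the routine passage from fibers to integrals and from finite $n$ to the limit. Surjectivity of the fiber maps is essential in part (1), to make covering of $\mathcal{G}_\omega$ equivalent to covering of $\mathcal{E}_\omega$, but plays no role in part (2), where it is the pushforward of measures that carries the argument.
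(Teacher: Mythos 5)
Your proof is correct. The paper states Lemma \ref{lemma3.1} without proof (introducing it as ``an obvious fact''), and your fiberwise argument --- the cocycle/preimage identity $\psi_\omega^{-1}\bigl(\bigvee_{i}((T_1)_\omega^i)^{-1}\mathcal{A}(\vartheta^i\omega)\bigr)=\bigvee_{i}((T_2)_\omega^i)^{-1}(\psi^{-1}\mathcal{A})(\vartheta^i\omega)$, the surjectivity-based equality of minimal subcover cardinalities, and the disintegration identity $\nu_\omega=(\psi_\omega)_*\mu_\omega$ followed by integration over $\omega$ --- is precisely the standard justification the authors leave implicit.
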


We also need the following lemma.

\begin{lemma}
\label{lemma3.3}
Let $T$ be a continuous bundle RDS on $\mathcal{E}$ over $\vartheta$ and $\mathcal{R}\in \mathcal{P}_{\mathcal{E}}$. Then the following hold:
\renewcommand{\theenumi}{(\arabic{enumi})}
\begin{enumerate}
\item the function $\mu\rightarrow H_{\mu}(\mathcal{R}\mid \mathcal{F}_{\mathcal{E}})$ is concave on $\mathcal{P}_P(\mathcal{E})$;
\item the function $\mu\rightarrow h^{(r)}_{\mu}(T,\mathcal{R})$ and $\mu\rightarrow h^{(r)}_{\mu}(T)$ are affine on $\mathcal{I}_P(\mathcal{E})$.
\end{enumerate}
\end{lemma}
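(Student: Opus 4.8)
The plan is to reduce both statements to the scalar function $\phi(t)=-t\log t$ and its behaviour under convex combinations, working through the fibre formula $H_\mu(\mathcal{R}\mid\mathcal{F}_{\mathcal{E}})=\int H_{\mu_\omega}(\mathcal{R}(\omega))\,dP(\omega)$ of \eqref{kifer2.1}. Two preliminary observations do most of the work. First, if $\mu=\lambda\mu_1+(1-\lambda)\mu_2$ with $\mu_1,\mu_2\in\mathcal{P}_P(\mathcal{E})$ and $\lambda\in[0,1]$ (so that $\mu\in\mathcal{P}_P(\mathcal{E})$ as well, its $\Omega$-marginal still being $P$), then by uniqueness of the disintegration over the Lebesgue space $(\Omega,\mathcal{F},P)$ we have $\mu_\omega=\lambda(\mu_1)_\omega+(1-\lambda)(\mu_2)_\omega$ for $P$-a.s.\ $\omega$. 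Second, for any finite partition $\mathcal{A}$ of $\mathcal{E}_\omega$ and any $\theta=\lambda\theta_1+(1-\lambda)\theta_2$ one has the two-sided estimate
\[
\lambda H_{\theta_1}(\mathcal{A})+(1-\lambda)H_{\theta_2}(\mathcal{A})\le H_\theta(\mathcal{A})\le \lambda H_{\theta_1}(\mathcal{A})+(1-\lambda)H_{\theta_2}(\mathcal{A})+H(\lambda,1-\lambda),
\]
the left inequality being the concavity of $\phi$ applied coordinatewise (as $\theta\mapsto\theta(A)$ is affine) and the right inequality being the conditioning bound $H(X)\le H(X,Z)=H(Z)+H(X\mid Z)$ obtained by adjoining the two-point label $Z$ distinguishing $\theta_1$ from $\theta_2$. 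The crucial feature is that the error term $H(\lambda,1-\lambda)$ is a constant, independent of $\omega$, of $\mathcal{A}$, and of any iterate count.

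For part (1) I would apply the left-hand inequality fibrewise with $\mathcal{A}=\mathcal{R}(\omega)$ and $\theta_i=(\mu_i)_\omega$, then integrate against $P$; the fibre formula \eqref{kifer2.1} turns this into
\[
\lambda H_{\mu_1}(\mathcal{R}\mid\mathcal{F}_{\mathcal{E}})+(1-\lambda)H_{\mu_2}(\mathcal{R}\mid\mathcal{F}_{\mathcal{E}})\le H_\mu(\mathcal{R}\mid\mathcal{F}_{\mathcal{E}}),
\]
which is the asserted concavity on $\mathcal{P}_P(\mathcal{E})$; note that no invariance is used here.

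For the first claim of part (2), observe that $\mathcal{I}_P(\mathcal{E})$ is convex, so $\mu\in\mathcal{I}_P(\mathcal{E})$, and apply the full two-sided estimate fibrewise to $\mathcal{A}=\bigvee_{i=0}^{n-1}(T_\omega^i)^{-1}\mathcal{R}(\vartheta^i\omega)$, again with $\theta_i=(\mu_i)_\omega$. Integrating over $\omega$ sandwiches $H_\mu(\mathcal{R}_0^{n-1}\mid\mathcal{F}_{\mathcal{E}})$ between $\lambda H_{\mu_1}(\mathcal{R}_0^{n-1}\mid\mathcal{F}_{\mathcal{E}})+(1-\lambda)H_{\mu_2}(\mathcal{R}_0^{n-1}\mid\mathcal{F}_{\mathcal{E}})$ and the same quantity plus $H(\lambda,1-\lambda)$. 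Dividing by $n$ and letting $n\to\infty$ makes the error $H(\lambda,1-\lambda)/n$ vanish, and the fibre expression for $h^{(r)}_\mu(T,\mathcal{R})$ then gives $h^{(r)}_\mu(T,\mathcal{R})=\lambda h^{(r)}_{\mu_1}(T,\mathcal{R})+(1-\lambda)h^{(r)}_{\mu_2}(T,\mathcal{R})$.

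For affineness of $\mu\mapsto h^{(r)}_\mu(T)$, one direction is free: since $h^{(r)}_\mu(T)=\sup_{\mathcal{R}}h^{(r)}_\mu(T,\mathcal{R})$ is a supremum of affine functions it is convex, giving $h^{(r)}_\mu(T)\le\lambda h^{(r)}_{\mu_1}(T)+(1-\lambda)h^{(r)}_{\mu_2}(T)$. For the reverse inequality I would use a joining argument: given $\epsilon>0$, choose finite partitions $\mathcal{R}_1,\mathcal{R}_2$ with $h^{(r)}_{\mu_i}(T,\mathcal{R}_i)>h^{(r)}_{\mu_i}(T)-\epsilon$, set $\mathcal{R}=\mathcal{R}_1\vee\mathcal{R}_2$ (finite, hence of finite conditional entropy for both $\mu_1$ and $\mu_2$ by part \ref{lemma2.2p3} of Lemma \ref{lemma2.2}), and combine monotonicity $h^{(r)}_{\mu_i}(T,\mathcal{R})\ge h^{(r)}_{\mu_i}(T,\mathcal{R}_i)$ with the affineness just proved to get $h^{(r)}_\mu(T)\ge h^{(r)}_\mu(T,\mathcal{R})=\lambda h^{(r)}_{\mu_1}(T,\mathcal{R})+(1-\lambda)h^{(r)}_{\mu_2}(T,\mathcal{R})>\lambda h^{(r)}_{\mu_1}(T)+(1-\lambda)h^{(r)}_{\mu_2}(T)-\epsilon$; letting $\epsilon\to0$ closes the gap. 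The scalar estimates and the disintegration step are routine, so I expect the main obstacle to be this reverse inequality for $h^{(r)}_\mu(T)$, where one must justify that finite partitions of the product form already compute the supremum (so that the join $\mathcal{R}_1\vee\mathcal{R}_2$ stays admissible and of finite conditional entropy for both measures), together with the measurability and integrability needed to pass the fibrewise estimates through $\int\cdot\,dP$.
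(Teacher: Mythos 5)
Your argument for part (1) and for the affineness of $\mu\mapsto h^{(r)}_{\mu}(T,\mathcal{R})$ is correct and coincides with the paper's: both rest on the fibre formula \eqref{kifer2.1}, the a.s.\ identity $\mu_{\omega}=\lambda(\mu_1)_{\omega}+(1-\lambda)(\mu_2)_{\omega}$, and the two-sided estimate with the $\omega$-independent error $H(\lambda,1-\lambda)$, which dies after dividing by $n$. The only genuine divergence is in the affineness of $\mu\mapsto h^{(r)}_{\mu}(T)$. The paper fixes an increasing sequence of finite Borel partitions $\mathcal{P}_j$ of $X$ with $\mathrm{diam}(\mathcal{P}_j)\to 0$, so that the induced partitions $\mathcal{Q}_j$ generate $\mathcal{F}\otimes\mathcal{B}$ together with $\mathcal{F}\times X$, invokes Kifer's Lemma~1.6 to get $h^{(r)}_{\mu}(T)=\lim_j h^{(r)}_{\mu}(T,\mathcal{Q}_j)$ simultaneously for $\mu$, $\mu_1$, $\mu_2$ (the sequence being universal), and passes the affine identity to the limit. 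You instead get convexity for free as a supremum of affine functions and prove the reverse inequality by a common-refinement $\epsilon$-argument. Your route is more elementary in that it avoids the generating-sequence lemma, but it does lean on the reduction of the supremum to finite partitions (so that $\mathcal{R}_1\vee\mathcal{R}_2$ is admissible with finite conditional entropy for both measures); this reduction is standard — a countable partition of finite conditional entropy is approximated in entropy by lumping its tail into one atom — and you correctly flag it, so there is no gap. Both proofs are valid; the paper's limit argument has the mild advantage of exhibiting $h^{(r)}_{\mu}(T)$ directly as a pointwise limit of affine functions, which it reuses elsewhere, while yours is self-contained modulo the finite-partition reduction.
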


\begin{proof}
(1) Let $\mu=a\nu+(1-a)\eta$, where $\nu$, $\eta\in \mathcal{P}_P(\mathcal{E})$ and $0<a<1$. Since $H_{\mu}(\mathcal{R}\mid \mathcal{F}_{\mathcal{E}})=\int H_{\mu_{\omega}}(\mathcal{R}(\omega))dP(\omega)$, $\mu \in \mathcal{P}_P(\mathcal{E})$, and $\mu_{\omega}\rightarrow H_{\mu_{\omega}}(\mathcal{R}(\omega))$ is concave on $\mathcal{M}(X)$, where $\mathcal{M}(X)$  is the set of Borel probability measures on $X$. It is easy to see that
\begin{equation}\label{ineq3.1}
H_{\mu}(\mathcal{R}\mid \mathcal{F}_{\mathcal{E}})\geq a H_{\nu}(\mathcal{R}\mid \mathcal{F}_{\mathcal{E}})+(1-a)H_{\eta}(\mathcal{R}\mid \mathcal{F}_{\mathcal{E}}).
\end{equation}
Then $\mu\rightarrow H_{\mu}(\mathcal{R}\mid \mathcal{F}_{\mathcal{E}})$ is concave on $\mathcal{P}_P(\mathcal{E})$.

(2) Let $\mu=a\nu+(1-a)\eta$, where $\nu$, $\eta\in \mathcal{I}_P(\mathcal{E})$ and $0<a<1$. Using inequality \eqref{ineq3.1} we have
\begin{align*}
0&\leq H_{\mu}(\mathcal{R}\mid\mathcal{F}_{\mathcal{E}})
-aH_{\nu}(\mathcal{R}\mid\mathcal{F}_{\mathcal{E}})-(1-a)H_{\eta}(\mathcal{R}\mid\mathcal{F}_{\mathcal{E}})\\
&=\int (H_{\mu_{\omega}}(\mathcal{R}(\omega))
-aH_{\nu_{\omega}}(\mathcal{R}(\omega))-(1-a)H_{\eta_{\omega}}(\mathcal{R}(\omega)))dP(\omega)\\
&\leq \int (-a\log a - (1-a)\log (1-a))dP(\omega)\\
&=-a\log a - (1-a)\log (1-a).
\end{align*}
Hence
\begin{equation}\label{eq3.1}
h_{\mu}^{(r)}(T,\mathcal{R})=ah_{\nu}^{(r)}(T,\mathcal{R})+(1-a)h_{\eta}^{(r)}(T,\mathcal{R}),
\end{equation}
so that $\mu\rightarrow h_{\mu}^{(r)}(T,\mathcal{R})$ is affine.

Note that the supremum in the definition of $h_{\mu}^{(r)}(T)$ can be taken over partitions $\mathcal{Q}$ of $\mathcal{E}$ into sets $Q_i$ with the form $Q_i=(\Omega\times P_i)\cap\mathcal{E}$, where $\mathcal{P}=\{P_i\}$ is a partition of $X$. We can take an increasing sequence of finite Borel partitions $\mathcal{P}_j$ of $X$ with $\text{diam}(\mathcal{P}_j)\rightarrow 0$. Then
$$
(\Omega \times \bigvee_{j=1}^{\infty}\mathcal{P}_j)\vee (\mathcal{F}\times X)=\mathcal{F}\otimes\mathcal{B}.
$$
It follows from  Lemma 1.6 in \cite{Kifer}) that
$$
h^{(r)}_{\mu}(T)=\lim_{j\rightarrow \infty}h^{(r)}_{\mu}(T,\mathcal{Q}_j),
$$
where $\mathcal{Q}_j=\{Q_{j_i}\}$, $Q_{j_i}=(\Omega\times P_{j_i})\cap\mathcal{E}$, and $\mathcal{P}_j=\{P_{j_i}\}$ is a finite Borel partition of $X$. Replacing $\mathcal{R}$ by $\mathcal{R}_j$ in the equality \eqref{eq3.1}, letting $j\rightarrow \infty$, one has
$$
h_{\mu}^{(r)}(T)=ah_{\nu}^{(r)}(T)+(1-a)h_{\eta}^{(r)}(T),
$$
and we complete the proof of Lemma \ref{lemma3.3}.
\end{proof}

A real-valued function $f$ defined on a compact metric space $Z$ is
called {\it upper semi-continuous }(for short u.s.c.) if one of the
following equivalent conditions holds:
\begin{enumerate}
\item $\limsup_{z'\rightarrow z}f(z')\leq f(z)$ for each $z\in Z$;
\item for each $r\in \mathbb{R}$, the set $\{z\in Z:f(z)\geq r\}$ is
closed.\label{usc2}
\end{enumerate}
By \ref{usc2}, the infimum of any family of u.s.c. functions is
again a u.s.c. one; both the sum and supremum of finitely many
u.s.c. functions are u.s.c. ones.

For each function $f$ on $\mathcal{E}$, which is measurable in
$(\omega, x)$ and continuous in $x \in \mathcal{E}_{\omega}$, let
\begin{equation*}
\| f \| =\int \|  f(\omega) \|_{\infty} \, dP, \quad
\text{where} \quad \|  f(\omega) \|_{\infty} = \sup_{x\in
\mathcal{E}_{\omega}}\mid f(\omega,x) \mid.
\end{equation*}
Let $\mathbf{L}_{\mathcal{E}}^1 (\Omega, \mathcal{C}(X))$ be  the
space of such functions $f$ with $\|f \| < \infty$. If we identify
$f$ and $g$ for $f, g \in
 \mathbf{L}_{\mathcal{E}}^1 (\Omega, \mathcal{C}(X))$ with $\| f-g\| =0$, then
$\mathbf{L}_{\mathcal{E}}^1 (\Omega, \mathcal{C}(X))$ is a Banach
space with the norm $\| \cdot \|$.

For $\mu$, $\mu_n\in \mathcal{P}_P(\mathcal{E})$, $n=1,2,\dots$,
one called that $\mu_n$ converges to $\mu $ if $\int f d\mu_n \rightarrow \int f d\mu $
as $n\rightarrow \infty$ for any $f\in \mathbf{L}_{\mathcal{E}}^1 (\Omega, \mathcal{C}(X))$.
This introduces a weak* topology in $\mathcal{P}_P(\mathcal{E})$.
It is well known that $\mathcal{P}_P(\mathcal{E})$ is compact in this weak* topology.
Moreover, the follow lemma holds (\cite{Kifer2001}).

\begin{lemma}\label{lem3}
For any $\nu_k\in \mathcal{P}_P(\mathcal{E})$, $k\in \mathbb{N}$, the set of limit points in the above weak* topology of the sequence
$$\mu_n=\frac{1}{n}\sum_{k=0}^{n-1}\Theta^k\nu_n  \quad  \text{as}\,\,\,  n\rightarrow \infty$$
is not empty and is contained in $\mathcal{I}_P(\mathcal{E})$.
\end{lemma}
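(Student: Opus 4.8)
The plan is to run the classical Krylov--Bogolyubov averaging argument, adapted to the weak* topology on $\mathcal{P}_P(\mathcal{E})$ induced by $\mathbf{L}^1_{\mathcal{E}}(\Omega,\mathcal{C}(X))$. First I would check that each averaged measure $\mu_n$ actually lies in $\mathcal{P}_P(\mathcal{E})$. Since $\vartheta$ preserves $P$, every pushforward $\Theta^k\nu_n$ has marginal $P$ on $\Omega$, so the convex combination $\mu_n$ has marginal $P$ as well; and since $\Theta$ maps $\mathcal{E}$ into itself, each $\Theta^k\nu_n$ is carried by $\mathcal{E}$, whence $\mu_n(\mathcal{E})=1$. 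Thus $\{\mu_n\}\subset\mathcal{P}_P(\mathcal{E})$. Non-emptiness of the limit set is then immediate: $\mathcal{P}_P(\mathcal{E})$ is weak*-compact (as recalled just above the statement), so $\{\mu_n\}$ admits at least one convergent subsequence, whose limit again lies in the (closed) set $\mathcal{P}_P(\mathcal{E})$.

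The substance is to show that every limit point $\mu$ is $\Theta$-invariant. Here I would exploit the telescoping identity obtained by reindexing $\Theta\mu_n=\tfrac1n\sum_{k=1}^{n}\Theta^k\nu_n$, namely
$$\Theta\mu_n-\mu_n=\frac{1}{n}\big(\Theta^n\nu_n-\nu_n\big).$$
For any $f\in\mathbf{L}^1_{\mathcal{E}}(\Omega,\mathcal{C}(X))$, both $\nu_n$ and $\Theta^n\nu_n$ have marginal $P$, so disintegrating and using that the fibre measures are probabilities gives $\big|\int f\,d(\Theta^n\nu_n)\big|\le\|f\|$ and $\big|\int f\,d\nu_n\big|\le\|f\|$. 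Hence
$$\Big|\int f\,d(\Theta\mu_n)-\int f\,d\mu_n\Big|\le \frac{2\|f\|}{n}\longrightarrow 0.$$

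The one genuine technical point, and the step I expect to require the most care, is transferring weak* convergence through the map $\Theta$. I would verify that $f\circ\Theta$ again belongs to $\mathbf{L}^1_{\mathcal{E}}(\Omega,\mathcal{C}(X))$: continuity of $(f\circ\Theta)(\omega,x)=f(\vartheta\omega,T_\omega x)$ in the fibre variable $x$ follows from continuity of $x\mapsto T_\omega x$ (this is exactly where the homeomorphic/continuous RDS hypothesis is used), joint measurability is inherited from that of $\Theta$ and $f$, and the fibrewise bound $\|f\circ\Theta(\omega)\|_\infty=\sup_{x\in\mathcal{E}_\omega}|f(\vartheta\omega,T_\omega x)|\le\|f(\vartheta\omega)\|_\infty$, integrated against the $\vartheta$-invariant $P$, yields $\|f\circ\Theta\|\le\|f\|<\infty$. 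Consequently, along a subsequence $\mu_{n_j}\to\mu$ one has
$$\int f\,d(\Theta\mu_{n_j})=\int f\circ\Theta\,d\mu_{n_j}\longrightarrow\int f\circ\Theta\,d\mu=\int f\,d(\Theta\mu).$$

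Combining this continuity with the vanishing estimate of the previous paragraph, I obtain $\int f\,d(\Theta\mu)=\int f\,d\mu$ for every $f\in\mathbf{L}^1_{\mathcal{E}}(\Omega,\mathcal{C}(X))$, and since these functions separate measures this forces $\Theta\mu=\mu$. As $\mu$ also lies in $\mathcal{P}_P(\mathcal{E})$, we conclude $\mu\in\mathcal{I}_P(\mathcal{E})$, which proves that the (non-empty) set of limit points is contained in $\mathcal{I}_P(\mathcal{E})$.
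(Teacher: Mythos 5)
Your argument is correct. The paper itself gives no proof of this lemma --- it is quoted directly from Kifer's paper on topological pressure for random bundle transformations --- and what you have written is precisely the standard Krylov--Bogolyubov averaging argument used there: the telescoping identity $\Theta\mu_n-\mu_n=\frac{1}{n}(\Theta^n\nu_n-\nu_n)$, the uniform bound $|\int f\,d\nu|\le\|f\|$ for $\nu\in\mathcal{P}_P(\mathcal{E})$, and the verification that $f\circ\Theta$ stays in $\mathbf{L}^1_{\mathcal{E}}(\Omega,\mathcal{C}(X))$ with $\|f\circ\Theta\|\le\|f\|$ via $\vartheta$-invariance of $P$. The only point worth making explicit is that extracting convergent subsequences (rather than subnets) is justified because $\mathcal{F}$ is assumed countably generated and $X$ is compact metric, so $\mathbf{L}^1_{\mathcal{E}}(\Omega,\mathcal{C}(X))$ is separable and the weak* topology on $\mathcal{P}_P(\mathcal{E})$ is metrizable.
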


The  following lemma shows that in the sense of the above weak* topology the random measure-theoretical entropy map with the $\sigma$-algebra $\mathcal{F}_{\mathcal{E}}$ is u.s.c.. The first part of it was already given in \cite{Kifer2001}.

\begin{lemma}
\label{lemma3.4}
Let $T$ be a continuous RDS on $\mathcal{E}$ over $\vartheta$. Let $\mathcal{P}=\{P_1,\dots,P_k\}$ be a finite partition of $X$ satisfying $\int\mu_{\omega}(\partial\mathcal{P}_{\omega})dP(\omega)=0$, where $\mu_{\omega}$ are the disintegrations of $\mu$ and $\partial\mathcal{P}_{\omega}=\bigcup_{i=1}^k\partial (P_i\cap \mathcal{E}_{\omega})$ is the boundary of $\mathcal{P}_{\omega}=\{P_1\cap\mathcal{E}_{\omega},\dots,P_k\cap\mathcal{E}_{\omega}\}$; denote by $\mathcal{R}$ the partition of $\mathcal{E}$ into sets $(\Omega\times P_i)\cap \mathcal{E}$; then
\renewcommand{\theenumi}{(\alph{enumi})}
\begin{enumerate}
\item $\mu\rightarrow H_{\mu}(\mathcal{R}\mid \mathcal{F}_{\mathcal{E}})$ is a u.s.c. function on $\mathcal{P}_P(\mathcal{E})$.\label{a}
\item $\mu\rightarrow h_{\mu}^{(r)}(T,\mathcal{R})$  is a u.s.c. function on $\mathcal{I}_P(\mathcal{E})$.\label{b}
\end{enumerate}
\end{lemma}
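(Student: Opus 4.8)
The plan is to obtain (b) from (a) by the elementary fact that an infimum of functions each upper semi-continuous at a point is again upper semi-continuous at that point, so the substance lies in (a); since the excerpt notes that (a) is essentially Kifer's, I will recall the mechanism of (a) and then spend most of the effort on the reduction and on checking that the boundary hypothesis survives the refinements needed for (b). Throughout I read the statement as asserting upper semi-continuity of $\nu\mapsto H_\nu(\mathcal R\mid\mathcal F_{\mathcal E})$ (resp. $\nu\mapsto h^{(r)}_\nu(T,\mathcal R)$) at the fixed measure $\mu$ appearing in the boundary hypothesis.

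For (a) I would start from the fibre expression $H_\nu(\mathcal R\mid\mathcal F_{\mathcal E})=\int\big(-\sum_{i}\nu_\omega(R_i(\omega))\log\nu_\omega(R_i(\omega))\big)\,dP(\omega)$ and take any sequence $\nu_m\to\mu$ in the bundle weak$^\ast$ topology, which by definition means $\int\nu_{m,\omega}(f(\omega,\cdot))\,dP\to\int\mu_\omega(f(\omega,\cdot))\,dP$ for every $f\in\mathbf L^1_{\mathcal E}(\Omega,\mathcal C(X))$. The hypothesis $\int\mu_\omega(\partial\mathcal P_\omega)\,dP=0$ forces $\mu_\omega(\partial R_i(\omega))=0$ for $P$-a.e. $\omega$, so each fibre atom is a $\mu_\omega$-continuity set. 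Using the measurability of $\omega\mapsto\mathcal E_\omega$ (Castaing) I would, for each $i$ and each $\delta>0$, build functions in $\mathbf L^1_{\mathcal E}(\Omega,\mathcal C(X))$, continuous on the fibres, that sandwich $\mathbbm 1_{R_i(\omega)}$ up to a set of total $\mu$-measure $<\delta$; testing these against $\nu_m$ and $\mu$ and letting $\delta\to0$ controls $\nu_{m,\omega}(R_i(\omega))$ by $\mu_\omega(R_i(\omega))$ in the integrated sense. Since $t\mapsto-t\log t$ is continuous, concave and bounded (the whole integrand by $\log k$), dominated convergence then yields $\limsup_m H_{\nu_m}(\mathcal R\mid\mathcal F_{\mathcal E})\le H_\mu(\mathcal R\mid\mathcal F_{\mathcal E})$, which is (a).

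For (b) recall that by parts \ref{lemma2.2p3} and \ref{lemma2.2p4} of Lemma \ref{lemma2.2} the sequence $n\mapsto H_\mu(\mathcal R_0^{\,n-1}\mid\mathcal F_{\mathcal E})$ is subadditive, so $h^{(r)}_\mu(T,\mathcal R)=\inf_{n\ge1}\frac1n H_\mu(\mathcal R_0^{\,n-1}\mid\mathcal F_{\mathcal E})$. I would first verify that the boundary hypothesis is inherited by every join: continuity of $T_\omega$ gives $\partial\big(\mathcal R_0^{\,n-1}(\omega)\big)\subseteq\bigcup_{j=0}^{n-1}(T_\omega^{\,j})^{-1}\partial\mathcal P_{\vartheta^j\omega}$, and the invariance relation $T_\omega\mu_\omega=\mu_{\vartheta\omega}$ on disintegrations together with the $P$-invariance of $\vartheta$ gives $\int\mu_\omega\big(\partial\mathcal R_0^{\,n-1}(\omega)\big)\,dP\le\sum_{j=0}^{n-1}\int\mu_{\vartheta^j\omega}(\partial\mathcal P_{\vartheta^j\omega})\,dP=0$. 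Thus (a), applied to each $\mathcal R_0^{\,n-1}$ (whose fibre atoms are no longer of product form but still admit the same fibrewise continuous approximation), shows that each $\nu\mapsto\frac1n H_\nu(\mathcal R_0^{\,n-1}\mid\mathcal F_{\mathcal E})$ is upper semi-continuous at $\mu$. Finally, given $\epsilon>0$ choose $N$ with $\frac1N H_\mu(\mathcal R_0^{\,N-1}\mid\mathcal F_{\mathcal E})\le h^{(r)}_\mu(T,\mathcal R)+\epsilon$; then $\limsup_{\nu\to\mu}h^{(r)}_\nu(T,\mathcal R)\le\limsup_{\nu\to\mu}\frac1N H_\nu(\mathcal R_0^{\,N-1}\mid\mathcal F_{\mathcal E})\le\frac1N H_\mu(\mathcal R_0^{\,N-1}\mid\mathcal F_{\mathcal E})\le h^{(r)}_\mu(T,\mathcal R)+\epsilon$, and letting $\epsilon\to0$ proves (b).

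The hard part is (a). The obstruction is that bundle weak$^\ast$ convergence only delivers convergence of the fibre integrals \emph{after} integrating in $\omega$, and not $P$-a.e. convergence of the disintegrations $\nu_{m,\omega}$, so one cannot simply quote the fibrewise portmanteau theorem on each $\mathcal E_\omega$. The zero-boundary hypothesis, converted through a \emph{measurable} family of continuous sandwiching functions, is exactly what turns this integrated convergence into a bound good enough for the nonlinear, concave entropy integrand; performing this construction uniformly in $\omega$ for the non-product atoms of the joins $\mathcal R_0^{\,n-1}$ is the technical crux, and it is also the reason the measurable-selection hypotheses on $\mathcal E$ are needed.
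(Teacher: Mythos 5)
Your argument for part (b) is essentially the paper's: the paper likewise deduces (b) by observing that each $\nu\mapsto\frac1n H_\nu(\mathcal R_0^{\,n-1}\mid\mathcal F_{\mathcal E})$ is u.s.c.\ (invoking (a)) and that an infimum of u.s.c.\ functions is again u.s.c., and your explicit check that the zero-boundary hypothesis is inherited by the joins $\mathcal R_0^{\,n-1}$ is a detail the paper passes over silently. Part (a) is not proved in the paper at all --- it is quoted from Kifer \cite{Kifer2001} --- so your sketch of it is supplementary; it is plausible in outline, though the passage from integrated weak$^*$ convergence of the disintegrations to the nonlinear entropy integrand is precisely where the substantive work (Kifer's argument) lies and your ``dominated convergence'' step compresses it.
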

\begin{proof}
We only prove the second part. By \ref{a}, $\mu\rightarrow H_{\mu}(\bigvee_{i=0}^{n-1}(\Theta^i)^{-1}\mathcal{R}\mid \mathcal{F}_{\mathcal{E}})$ is also a u.s.c. function on $\mathcal{I}_P(\mathcal{E})$. Note that for $\mu \in \mathcal{I}_P(\mathcal{E})$, $h_{\mu}^{(r)}(T,\mathcal{R})=\inf_{n\geq 1}\frac{1}{n}H_{\mu}(\bigvee_{i=0}^{n-1}(\Theta^i)^{-1}\mathcal{R}\mid \mathcal{F}_{\mathcal{E}})$, i.e. the function $\mu\rightarrow h_{\mu}^{(r)}(T,\mathcal{R})$ is the infimum of the family of u.s.c. functions $\frac{1}{n}H_{\mu}(\bigvee_{i=0}^{n-1}(\Theta^i)^{-1}\mathcal{R}\mid \mathcal{F}_{\mathcal{E}})$ on $\mathcal{I}_P(\mathcal{E})$. By \ref{usc2} in the definition of the u.s.c. function, $\mu\rightarrow h_{\mu}^{(r)}(T,\mathcal{R})$  is a u.s.c. function on $\mathcal{I}_P(\mathcal{E})$.
\end{proof}

The following Lemma \ref{lemma3.5} is important in the argument of the variational inequality of random entropy for $h_{\mu}^{(r)+}$. We follow the idea  of Kifer \cite{Kifer2001} for constructing a  measurable family of maximal separated sets  on fibers in bundle RDS  and  that of  Huang {\it et al.} \cite{Huang2006} for tackling with the  local variational inequality in the deterministic dynamical system. Then following Misiurewicz's method, we could avoid a similar combinatorial lemma in \cite{Blanchard1997} as in the deterministic case and obtain the variational inequality of random entropy stated in the beginning of this section.

If $\mathcal{R}=\{R_i\}$ is a partition of $\mathcal{E}$,
then $\mathcal{Q}=\bigvee_{i=0}^{n-1}(\Theta^i)^{-1}\mathcal{R}$ (denote by $(\mathcal{R})_0^{n-1}$) is a partition consisting of sets $\{Q_i\}$ such that the corresponding partition $\mathcal{Q}(\omega)=\{Q_j(\omega)\}$, $Q_j(\omega)=\{x: (\omega,x)\in Q_j\}$ of $\mathcal{E}_{\omega}$ has the form $\mathcal{Q}(\omega)=\bigvee_{i=0}^{n-1}(T_{\omega}^i)^{-1}\mathcal{R}(\vartheta^i\omega)$, where $\mathcal{R}(\omega)=\{R_i(\omega)\}$, $R_i(\omega)=\{x\in \mathcal{E}_{\omega}:(\omega,x)\in R_i \}$ partitions  $\mathcal{E}_{\omega}$.
Let $A_1, A_2\in \mathcal{F}\times \mathcal{B}$ and each $A_i(\omega)$ be a closed subset of $\mathcal{E_{\omega}}$. It follows from \cite[Proposition \Rmnum{3}.13]{Castaing} that $\{(\omega,x_1,x_2): x_i\in A_i, \forall i\}$ belongs to the product $\sigma$-algebra $\mathcal{F}\times \mathcal{B}^2$ (as a graph of a measurable multifunction).

\begin{lemma}
\label{lemma3.5}
Let $X$ be zero-dimensional and $T$ be a continuous RDS on $\mathcal{E}$ over $\vartheta$, $\mathcal{U}\in \mathcal{C}_{\mathcal{E}}^o$.
Assume $K\in \mathbb{N}$ and $\{\mathcal{R}_l\}_{l=1}^K$ is a finite sequence of partitions of $\mathcal{E}$ finer than $\mathcal{U}$, where $\mathcal{R}_l=\{R_{l,i}\}$, $1\leq l\leq K$ and each $R_{l,i}(\omega)$ is clopen subset of $X$. Then for each $n\in \mathbb{N}$, there exists a family of maximal subsets $B_n(\omega)\subset \mathcal{E}_{\omega}$ with cardinality at least $[N(T,\omega,\mathcal{U},n)/K] $ such that each atom of $(\mathcal{R}_l)_0^{n-1}(\omega)$ contains at most one point of $B_n(\omega)$ for $1\leq l\leq K$, and depending measurably on $\omega$ in the sense that $B_n=\{(\omega,x):x\in B_n(\omega)\}\in \mathcal{F}\times \mathcal{B}$,
where $[N(T,\omega,\mathcal{U},n)/K]$ is the integer part of $N(T,\omega,\mathcal{U},n)/K$.
\end{lemma}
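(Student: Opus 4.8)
The plan is to construct $B_n(\omega)$ atom-by-atom from a common refinement, to read off the cardinality bound from set-inclusion maximality, and to secure the measurable dependence on $\omega$ by reducing to finitely many combinatorial types and invoking a measurable selection theorem. The hypotheses that $X$ is zero-dimensional and that each $R_{l,i}(\omega)$ is clopen are what make every atom in sight clopen (hence compact) with measurably varying fibers, which is exactly what the selection step needs.

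First I would recast the separation requirement combinatorially. Write $\mathcal{P}(\omega)=\bigvee_{l=1}^K(\mathcal{R}_l)_0^{n-1}(\omega)$; since each $\mathcal{R}_l$ is a finite partition into clopen sets and $T_\omega^i$ is continuous, $\mathcal{P}(\omega)$ is a finite partition of $\mathcal{E}_\omega$ into clopen atoms. For an atom $C$ of $\mathcal{P}(\omega)$ and each $l$, let $a_l(C)$ be the atom of $(\mathcal{R}_l)_0^{n-1}(\omega)$ containing $C$. A set $B\subset\mathcal{E}_\omega$ meets every atom of each $(\mathcal{R}_l)_0^{n-1}(\omega)$ in at most one point if and only if $B$ meets each $\mathcal{P}$-atom at most once and, writing $S=\{C:C\cap B\neq\emptyset\}$, the map $C\mapsto a_l(C)$ is injective on $S$ for every $l$. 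Thus the problem reduces to choosing a maximal admissible family $S$ of $\mathcal{P}$-atoms and then selecting one point from each atom of $S$.

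Next comes the cardinality estimate, which follows purely from maximality of $S$. If no $\mathcal{P}$-atom can be adjoined to $S$ while keeping every $a_l$ injective, then the $\mathcal{R}_l$-atoms $\{a_l(C):C\in S,\ 1\le l\le K\}$ cover $\mathcal{E}_\omega$: any $y\in\mathcal{E}_\omega$ lies in some $\mathcal{P}$-atom $D$, and either $D\in S$ (so $y\in a_l(D)$) or maximality forces $a_l(D)=a_l(C)$ for some $l$ and $C\in S$, whence $y\in a_l(C)$. Because $\mathcal{R}_l\succeq\mathcal{U}$ gives $(\mathcal{R}_l)_0^{n-1}\succeq\bigvee_{i=0}^{n-1}(T_\omega^i)^{-1}\mathcal{U}(\vartheta^i\omega)$ fiberwise, each of these atoms sits inside an element of $\bigvee_{i=0}^{n-1}(T_\omega^i)^{-1}\mathcal{U}(\vartheta^i\omega)$, and injectivity bounds their number by $K|S|$. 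Hence $N(T,\omega,\mathcal{U},n)\le K|S|$, so $|S|\ge[N(T,\omega,\mathcal{U},n)/K]$. Taking $B_n(\omega)$ to consist of one point per atom of $S$ yields the separation property and the required cardinality, and maximality of $S$ makes $B_n(\omega)$ maximal as a separated set.

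The main obstacle is the measurability assertion $B_n\in\mathcal{F}\times\mathcal{B}$, and this is where I would spend the effort. I would observe that the data consisting of the nonempty $\mathcal{P}$-atoms together with all labels $a_l(\cdot)$ takes only finitely many values, each attained on a measurable subset of $\Omega$; this partitions $\Omega$ (mod $P$) into finitely many measurable pieces $\Omega_\alpha$ on which the combinatorial type is constant. On each $\Omega_\alpha$ I fix once and for all a combinatorially maximal admissible family $S_\alpha$ of atom-labels, a finite choice that by the previous paragraph satisfies the bound for every $\omega\in\Omega_\alpha$. For each label $C\in S_\alpha$ the assignment $\omega\mapsto C(\omega)$ is a nonempty clopen, hence compact, subset of $\mathcal{E}_\omega$; being built from the $R_{l,i}(\omega)$ by finitely many continuous preimages and intersections, its graph lies in $\mathcal{F}\times\mathcal{B}$ by the measurability noted via \cite[Proposition III.13]{Castaing}. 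A measurable selection theorem then furnishes a measurable $x_C(\omega)\in C(\omega)$, and I set $B_n(\omega)=\{x_C(\omega):C\in S_\alpha\}$ for $\omega\in\Omega_\alpha$. Gluing over $\alpha$ gives $B_n=\bigcup_\alpha\{(\omega,x_C(\omega)):\omega\in\Omega_\alpha,\ C\in S_\alpha\}\in\mathcal{F}\times\mathcal{B}$, which completes the construction.
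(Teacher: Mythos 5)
Your proof is correct, and your cardinality estimate is the same maximality-plus-covering argument as the paper's; where you genuinely diverge is in how measurable dependence on $\omega$ is obtained. The paper follows Kifer's scheme: it encodes separated $q$-tuples as a subset $E_q^n$ of $\Omega\times X^q$ with closed fibres, derives measurability of the maximal cardinality $s_n(\omega)$ from the projection theorem, and then applies a measurable selection theorem to the multifunction $\omega\mapsto E_q^n(\omega)$ on each level set $\{\omega: s_n(\omega)=q\}$, selecting an entire maximal tuple at once. You instead exploit clopenness to reduce everything to finite combinatorics: the common refinement $\bigvee_{l=1}^{K}(\mathcal{R}_l)_0^{n-1}(\omega)$ has a fixed finite index set, so $\Omega$ splits into finitely many measurable type-classes on which the pattern of nonempty atoms is constant, and on each class you solve the admissibility problem once and then select one measurable point per clopen atom. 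Your route is more elementary at the selection step (single closed-valued atoms rather than the set of maximal separated tuples) and makes the role of zero-dimensionality transparent; the paper's route is closer to the separated-set machinery it reuses for general compact fibres. Two points you should make explicit: the measurability of each type-class $\{\omega: C(\omega)\neq\emptyset\}$ requires the projection theorem (Theorem III.23 of Castaing--Valadier, using completeness of $\mathcal{F}$), not merely the graph-measurability statement you cite; and the graph of each measurable selection $x_C$ must be checked to lie in $\mathcal{F}\times\mathcal{B}$ (e.g.\ via joint measurability of $(\omega,x)\mapsto d(x,x_C(\omega))$) so that the finite union defining $B_n$ is indeed a product-measurable set. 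Neither is a gap in substance, as both facts are already invoked elsewhere in the paper.
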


\begin{proof}
Let $n\in \mathbb{N}$. For any $x\in \mathcal{E}_{\omega}$ and $1\leq l\leq K$, let $A_{l,n}^{\omega}(x)$ be the atom of $(\mathcal{R}_l)_0^{n-1}(\omega)$ containing the point $x$. Then for any $x_1$ and $x_2$ in $\mathcal{E}_{\omega}$ and $1\leq l\leq K$, $ x_1 $ and $ x_2 $ are contained in the same atom of $(\mathcal{R}_l)_0^{n-1}(\omega)$  if and only if $A_{l,n}^{\omega}(x_1)=A_{l,n}^{\omega}(x_2)$. For convenience, we write $\mathcal{Q}_l=(\mathcal{R}_l)_0^{n-1}$ and $\mathcal{Q}_l(\omega)=\{Q_{l,j}(\omega)\}=(\mathcal{R}_l)_0^{n-1}(\omega)$.

For $q\in \mathbb{Z}^+$, set
\begin{align*}
D_q&=\{(\omega,x_1,\ldots,x_q):\omega\in \Omega, \,\, x_i\in \mathcal{E}_{\omega}, \,\,\forall i\},\\
E_q^n&=\{(\omega,x_1,\ldots,x_q)\in D_q: \,\, A_{l,n}^{\omega}(x_i)\neq A_{l,n}^{\omega}(x_j), \,\, \forall i\neq j, \,\, \forall l \},\\
E_{q,l}^n&=\{(\omega,x_1,\ldots,x_q)\in D_q: \,\, A_{l,n}^{\omega}(x_i)\neq A_{l,n}^{\omega}(x_j), \,\, \forall i\neq j \},\\
E_q^n(\omega)&=\{(x_1,\ldots,x_q): (\omega,x_1,\ldots,x_q)\in
E_q^n\}\\
E_{q,l}^n(\omega)&=\{(x_1,\ldots,x_q): (\omega,x_1,\ldots,x_q)\in
E_{q,l}^n\}.
\end{align*}
Observe that $D_q\in \mathcal{F}\times \mathcal{B}^q$ (\cite{Kifer2001}), where $\mathcal{B}^q$ is the product $\sigma$-algebra on the product $X^q$ of $q$ copies of $X$. $E_{q,l}^n$ can also be expressed as
\begin{align*}
E_{q,l}^n &=\{(\omega,x_1,\ldots,x_q)\in D_q: x_i\in Q_{l,r}(\omega),\,\,  x_j\in Q_{l,s}(\omega),\,\, \forall i\neq j,\,\, \forall r\neq s \}\\
&=\bigcup_{(r_1,\cdots,r_q)}\{(\omega,x_1,\ldots,x_q)\in D_q:  x_i\in Q_{l,r_i}(\omega),\,\,  \forall i  \},
\end{align*}
where the union takes over all the elements of the set $\{(r_1,\cdots,r_q)\in \mathbb{N}^q: 1\leq r_1< r_2<\cdots< r_q \leq \#\mathcal{Q}_l\}$.

Put
\begin{align*}
E_{q,l}^{n,r}&=\{(\omega,x_1,\ldots,x_q)\in D_q:  x_i\in Q_{l,r_i}(\omega),\,\,  \forall i  \},\\
E_{q,l}^{n,r}(\omega)&=\{(x_1,\ldots,x_q)\in D_q:  (\omega,x_1,\ldots,x_q)\in E_{q,l}^{n,r}\}.
\end{align*}
The set $E_{q,l}^{n,r}$ may be empty. Note that  each $Q_{l,r_i}\in \mathcal{F}\times \mathcal{B}$ and $Q_{l,r_i}(\omega)$ is closed subset of $\mathcal{E}_{\omega}$ by the continuity of the RDS $T$.
If $E_{q,l}^{n,r}$ is not an empty subset of $\Omega\times X^q$, then $E_{q,l}^{n,r}\in \mathcal{F}\times \mathcal{B}^q$ and $E_{q,l}^{n,r}(\omega)$ is a closed subset of $\mathcal{E}_{\omega}^q$. In particular, $E_q^n\in \mathcal{F}\times \mathcal{B}^q$ and $E_q^n(\omega)$ is also a closed subset of  $\mathcal{E}_{\omega}^q$.

Let $s_n(\omega) $ be the largest cardinality of $B_n(\omega)$ such that any element of $\mathcal{Q}_l(\omega)$ contains at most one point of $B_n(\omega)$. By Theorem \Rmnum{3}.23 in \cite{Castaing} it follows that
$$
\{\omega: s_n(\omega)\geq q\}=\{\omega: E_q^n(\omega)\neq \emptyset\}=\Pr\nolimits_{\Omega}E_q^n\in \mathcal{F},
$$
where $\Pr\nolimits_{\Omega}$ is the projection of $\Omega \times X^q$ to $\Omega$, and so $s_n(\omega)$ is measurable in $\omega$.

Observe that the sets $\Omega_q=\{\omega: s_n(\omega)=q\}$ are measurable, disjoint and  $\bigcup_{q\geq 1}\Omega_q=\Omega$. It follows from Theorem \Rmnum{3}.30 in \cite{Castaing} that the multifunction $\Psi_q$ defined by $\Psi_q(\omega)=E_q^n(\omega)$ for $\omega\in \Omega_q$ is measurable, and it admits a measurable selection $\sigma_q$ which is a measurable map $\sigma_q: \Omega_q\rightarrow X^q$ such that $\sigma_q(\omega )\in E_q^n(\omega)$ for all $\omega\in \Omega_q$. Let $\zeta_q$ be the multifunction from $X^q$ to $q$-point subsets of $X$ defined by $\zeta_q(x_1,\cdots,x_q)=\{x_1,\cdots,x_q\}\subset X$. Then $\zeta_q\circ\sigma_q$ is a multifunction assigning to each $\omega\in \Omega_q$ a maximal  subset $B_n(\omega)$ in $\mathcal{E}_{\omega}$.

Let  $B'_n(\omega)$ be a maximal subset of $\mathcal{E}_{\omega}$ such that any atom of $\mathcal{Q}_l(\omega)$ contains at most  one point of $B'_n(\omega)$ for  $1\leq l \leq K$. We claim that the cardinality of $B'_n(\omega)$ is no less than  $[N(T,\omega,\mathcal{U},n)/K]$. Assume the contrary, i.e., $B'_n(\omega)=\{x_1,\dots, x_d\}$ with $d< [N(T,\omega,\mathcal{U},n)/K]$.

Set $B_{\omega}=\bigcup_{i=1}^d\bigcup_{l=1}^k A_{l,n}^{\omega}(x_i)\bigcap \mathcal{E}_{\omega}$. For any $1\leq i \leq d$ and $1\leq l\leq K$, $A_{l,n}^{\omega}(x_i)$ is an atom of $\mathcal{Q}_l(\omega)$, and thus is contained in an element of
$\mathcal{U}_0^{N-1}(\omega) =\bigvee_{i=0}^{N-1}(T^i_{\omega})^{-1}\mathcal{U}(\vartheta^i\omega)$.
Particularly, $B_{\omega}$ is covered by at most $dK<K[N(T,\omega,\mathcal{U},n)/K]\leq N(T,\omega, \mathcal{U}, n)$ elements of $\mathcal{U}_0^{N-1}(\omega)$ . Since any subcover of $\mathcal{U}_0^{N-1}(\omega)$ which covers $\mathcal{E}_{\omega}$ has at least $N(T,\omega,\mathcal{U},n)$ elements, we have $\mathcal{E}_{\omega}\backslash B_{\omega}\neq \emptyset$.

Choosing an arbitrary point $x\in \mathcal{E}_{\omega}\backslash B_{\omega}$, we have $ x \not\in \bigcup_{i=1}^d\bigcup_{l=1}^k A_{l,n}^{\omega}(x_i) $. Note that for any $1\leq i \leq d$ and $1\leq l\leq K$, $A_{l,n}^{\omega}(x)\neq A_{l,n}^{\omega}(x_i)$, we conclude that $B'_n(\omega)\cup \{x\}$ is also a subset of $\mathcal{E}_{\omega}$ such that any atom of $\mathcal{Q}_l(\omega)$ contains  at most one point of $B'_n(\omega)\cup \{x\}$ for  $1\leq l\leq K$. This is contradiction, as $B'_n(\omega)$ is maximal. Choosing $B_n(\omega)\subset B'_n(\omega)$ with the cardinality $[N(T,\omega,\mathcal{U},n)/K]$ we obtain the maximal subset we needed.

For any open subset $U\subset X$, set $V_U^q(i)=\{(x_1,\cdots,x_q)\in X^q: x_i\in U\}$ which is an open subset of $X^q$. Then
$$
\{\omega\in \Omega_q: \zeta_q\circ\sigma_q(\omega)\cap U\neq \emptyset\}=\bigcup_{i=1}^q\sigma_q^{-1}V_U^q(i)\in \mathcal{F}.
$$
Let $\Phi(\omega)=\zeta_{s_n(\omega)}\circ\sigma_{s_n(\omega)}(\omega)$, then
$$
\{\omega: \Phi(\omega)\cap U\neq \emptyset\}=\bigcup_{q=1}^{\infty}\{\omega\in \Omega_q: \zeta_q\circ\sigma_q(\omega)\cap U\neq \emptyset\}\in  \mathcal{F}.
$$
Hence $\Phi$ is a measurable multifunction which assigns to each $\omega\in \Omega$ a maximal finite subset $B_n(\omega)$ with cardinality at least $[N(T,\omega,\mathcal{U},n)/K] $ such that each atom of $(\mathcal{R}_l)_0^{n-1}(\omega)$ contains at most one point of $B_n(\omega)$ for $1\leq l\leq K$, and we complete the proof of Lemma \ref{lemma3.5}.
\end{proof}

\begin{lemma}
\label{lem2}
Let $T$ be a continuous bundle RDS on $\mathcal{E}$ over $\vartheta$. There exists a continuous bundle RDS $S$ on $Y\subset \Omega\times K^{\mathbb{N}}$ over $\vartheta$, where $K$ is a Cantor space, and a family of subjective continuous maps $\{\pi_{\omega}: Y_{\omega}\rightarrow \mathcal{E}_{\omega}\}_{\omega\in \Omega}$ such that $\pi_{\vartheta\omega}\circ S_{\omega}=T_{\omega}\circ \pi_{\omega}$ for $P$-a.s. $\omega$ and $\pi:(\omega,y)\rightarrow (\omega,\pi_{\omega}y)$ constitutes a measurable map from $Y$ to $\mathcal{E}$.
\end{lemma}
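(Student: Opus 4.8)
The plan is to realize $S$ as a one-sided shift on a space of admissible $K$-valued orbit codings, built from a single fixed continuous surjection of the Cantor set onto the ambient space $X$. The key topological input is standard: since $X$ is a compact metric space, there is a continuous surjection $q\colon K\to X$ from the Cantor set $K$ onto $X$. This $q$ is chosen once and carries \emph{no} dependence on $\omega$, which is exactly what keeps the measurability tractable.

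Using $q$ I would define the extension fiberwise by
\begin{equation*}
Y_{\omega}=\Big\{(y_n)_{n\ge 0}\in K^{\mathbb{N}}:\ q(y_n)\in\mathcal{E}_{\vartheta^n\omega}\ \text{and}\ q(y_{n+1})=T_{\vartheta^n\omega}(q(y_n))\ \text{for all}\ n\ge 0\Big\},
\end{equation*}
and put $Y=\{(\omega,y):y\in Y_{\omega}\}$. The map $S$ is the coordinate shift $S_{\omega}((y_n)_{n\ge0})=(y_{n+1})_{n\ge0}$, and the factor map is $\pi_{\omega}((y_n)_{n\ge0})=q(y_0)$. A direct check shows $S_{\omega}$ sends $Y_{\omega}$ into $Y_{\vartheta\omega}$: writing $z_n=y_{n+1}$ one has $q(z_n)=q(y_{n+1})\in\mathcal{E}_{\vartheta^{n+1}\omega}=\mathcal{E}_{\vartheta^n(\vartheta\omega)}$ and $q(z_{n+1})=T_{\vartheta^n(\vartheta\omega)}(q(z_n))$. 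The intertwining relation is then immediate, since $\pi_{\vartheta\omega}(S_{\omega}((y_n)))=q(y_1)=T_{\omega}(q(y_0))=T_{\omega}(\pi_{\omega}((y_n)))$. Continuity of $S_{\omega}$ (a shift) and of $\pi_{\omega}$ (the composite of $q$ with the zeroth-coordinate projection) is clear, and since $K^{\mathbb{N}}$ is again a Cantor space each $Y_{\omega}$ is zero-dimensional, which is what makes this lemma useful for reducing to the setting of Lemma \ref{lemma3.5}.

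Two topological points need a short verification. For compactness of the fibers, each defining condition cuts out a closed subset of $K^{\mathbb{N}}$: $\{q(y_{n+1})=T_{\vartheta^n\omega}(q(y_n))\}$ is closed because $q$ and $x\mapsto T_{\vartheta^n\omega}x$ are continuous, while $\{q(y_n)\in\mathcal{E}_{\vartheta^n\omega}\}$ is closed because $\mathcal{E}_{\vartheta^n\omega}$ is compact; hence $Y_{\omega}$ is a closed, therefore compact, subset of $K^{\mathbb{N}}$. For surjectivity of $\pi_{\omega}$, given $x\in\mathcal{E}_{\omega}$ I build an admissible sequence inductively: pick $y_0\in q^{-1}(x)$, nonempty since $q$ is onto, and having chosen $y_n$ with $q(y_n)\in\mathcal{E}_{\vartheta^n\omega}$ set $y_{n+1}\in q^{-1}\big(T_{\vartheta^n\omega}(q(y_n))\big)$, again nonempty because $T_{\vartheta^n\omega}(q(y_n))\in\mathcal{E}_{\vartheta^{n+1}\omega}\subset X$. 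The resulting $(y_n)$ lies in $Y_{\omega}$ with $\pi_{\omega}((y_n))=x$.

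The main work, and the step I expect to be the real obstacle, is the measurability bookkeeping: showing $Y\in\mathcal{F}\times\mathcal{B}(K^{\mathbb{N}})$ with compact fibers, so that $Y$ fits the ambient framework, and that $S$ and $\pi$ are measurable. Here I would invoke the standing hypotheses that $(\omega,x)\mapsto T_{\omega}x$ is measurable, that $\mathcal{E}\in\mathcal{F}\times\mathcal{B}$, and that $\vartheta$ is measurable. For each fixed $n$ the map $(\omega,(y_k))\mapsto(\vartheta^n\omega,q(y_n))$ is measurable, so its preimage of $\mathcal{E}$ encodes the constraint $q(y_n)\in\mathcal{E}_{\vartheta^n\omega}$ measurably; likewise $(\omega,(y_k))\mapsto\big(q(y_{n+1}),T_{\vartheta^n\omega}(q(y_n))\big)$ is measurable into $X\times X$, and the constraint $q(y_{n+1})=T_{\vartheta^n\omega}(q(y_n))$ is its preimage of the diagonal. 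Intersecting these countably many measurable sets over $n\ge0$ gives $Y\in\mathcal{F}\times\mathcal{B}(K^{\mathbb{N}})$. Finally $\pi$ is $(\omega,(y_k))\mapsto(\omega,q(y_0))$ and $S$ is $(\omega,(y_k))\mapsto(y_{k+1})$; both are continuous in the $K^{\mathbb{N}}$-variable and do not involve $\omega$ beyond the measurable dependence already accounted for, hence measurable, completing the construction.
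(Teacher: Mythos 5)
Your construction is essentially the paper's: a fixed continuous surjection from a Cantor set onto $X$, the space of admissible orbit codings as fibers $Y_{\omega}$, the left shift for $S$, and evaluation at the zeroth coordinate for $\pi_{\omega}$. The only difference is in the measurability check — you establish $Y\in\mathcal{F}\times\mathcal{B}(K^{\mathbb{N}})$ directly as a countable intersection of preimages, whereas the paper verifies measurability of the multifunction $\omega\mapsto Y_{\omega}$ via the projection theorem of Castaing--Valadier; since $\mathcal{F}$ is complete and the fibers are compact these are equivalent, so your argument is sound.
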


\begin{proof}
Since $X$ is compact metric space, there exists a Cantor space $K$ and  a subjective continuous map $f: K\rightarrow X$.
For  each $\omega\in \Omega$, $f^{-1}(\mathcal{E}_{\omega})$ is a closed subset of $K$. Let $K_{\omega}=f^{-1}(\mathcal{E}_{\omega})$ and $f_{\omega}$ be the restriction of $f$ on $K_{\omega}$. Denote by $\Pi_{i=0}^{\infty}K_{\vartheta^i\omega}=K_{\omega}\times K_{\vartheta\omega}\times\cdots \times K_{\vartheta^n\omega}\times \cdots$. Since $K_{\omega}$ is a Cantor subset of $K$, $\Pi_{i=0}^{\infty}K_{\vartheta^i\omega}$ is also a Cantor subspace of $ K^{\mathbb{N}}$, where the latter is equipped with the product topology.

For each $\omega$, put
\begin{equation*}
Y_{\omega}=\{y\in \Pi_{i=0}^{\infty}K_{\vartheta^i\omega}: T_{\vartheta^n\omega}f_{\vartheta^n\omega}(y_n)=f_{\vartheta^{n+1}\omega}(y_{n+1}) \,\, \text{for every}\,\, n\in\mathbb{N}         \}.
\end{equation*}
Then $Y_{\omega}$ is a closed subset of $K^{\mathbb{N}}$ for each $\omega\in \Omega$. Let $Y=\{(\omega,y): \omega\in \Omega, y\in Y_{\omega}\}$. Then $Y$ is measurable in $\Omega \times K^{\mathbb{N}}$, i.e.,  $\{\omega: Y_{\omega}\cap U\neq \emptyset\}\in \mathcal{F}$ for each open subset $U\in K^{\mathbb{N}}$. In fact, Let $U=\Pi_{i=0}^{\infty}U_i$, where $U_i$ is the element of the basis of $K$. Note that for each $i\in \mathbb{N}$,  $U_i$ is a clopen set. If $y\in Y_{\omega}\cap U $, then for each $n\in \mathbb{N}$, one have $y_n\in U_n$, $y_{n+1}\in U_{n+1}$ and $T_{\vartheta^n\omega}f_{\vartheta^n\omega}(y_n)=f_{\vartheta^{n+1}\omega}(y_{n+1})$.
Let $V_n=f_{\vartheta^n\omega}(U_n)=f(K_{\vartheta^n\omega}\cap U_n)$, $n\in \mathbb{N}$. Then $V_n$ is the closed subset of $\mathcal{E}_{\vartheta^n\omega}$, $n\in \mathbb{N}$. It follows that
\begin{align*}
\{\omega: Y_{\omega}\cap U\neq \emptyset\}
=&\bigcap_{n=0}^{\infty}
\{\omega: T_{\vartheta^n\omega}f_{\vartheta^n\omega}(y_n)=f_{\vartheta^{n+1}\omega}(y_{n+1}), y_n\in U_n, y_{n+1}\in U_{n+1}
\}\\
=&\bigcap_{n=0}^{\infty}\{\omega: T_{\vartheta^n\omega}x_n=x_{n+1}, x_n\in V_n, x_{n+1}\in V_{n+1} \}\\
=&\bigcap_{n=0}^{\infty}\{\omega: T_{\vartheta^n\omega}^{-1}V_{n+1}\cap V_n\neq \emptyset  \}
\end{align*}
Since the map $(\omega,x)\rightarrow T_{\omega}x$ is measurable and $V_n$ is closed in X for each $n\in \mathbb{N}$, then
\begin{align*}
 &\{(\omega,x)\in \Omega\times V_n: T_{\vartheta^n\omega}x\in V_{n+1}\}\\
= &\{(\omega,x): T_{\vartheta^n\omega}x\in V_{n+1}\}\cap \{(\omega,x): \omega\in \Omega, x\in V_n\}                   \end{align*}
is a measurable subset of $\Omega\times X$ for each $n \in \mathbb{N}$. By the projection theorem in \cite{Castaing} (Theorem \Rmnum{3}.23), one have
$$\{\omega: T_{\vartheta^n\omega}^{-1}V_{n+1}\cap V_n\neq \emptyset  \}\in \mathcal{F},$$
for each $n \in \mathbb{N}$. Then $\{\omega: Y_{\omega}\cap U\neq \emptyset\}\in \mathcal{F}$.

For each $\omega$, let $\pi_{\omega}: Y_{\omega}\rightarrow \mathcal{E}_{\omega}$ be defined by $\pi_{\omega}(y)=f_{\omega}(y_0)$. Then $\pi_{\omega}$ is a subjective continuous map.
Let $\pi: (\omega,y)\rightarrow (\omega,\pi_{\omega}y)$ be the map from $Y$ to $\mathcal{E}$.
Note that for fixed $y$, $\{\omega: \pi_{\omega}y\in A\}=Y_y$ or the null set for any open subset $A$ of $X$, where  $Y_y=\{\omega: (\omega,y)\in Y \}$ is the $y$-section of $Y$.
By the measurability of  $Y$,
one knows that the map $(\omega,y)\rightarrow \pi_{\omega}y$ is measurable in $\omega$. Since $(\omega,y)\rightarrow \pi_{\omega}y$ is continuous in $y$, then the map $(\omega,y)\rightarrow \pi_{\omega}y$ in jointly measurable and $\pi$ constitutes a measurable map from from $Y$ to $\mathcal{E}$.

Let $S_{\omega}: Y_{\omega}\rightarrow Y_{\vartheta\omega}$ be defined by the left shift $(S_{\omega}y)_i=y_{i+1}$, $i\in \mathbb{N}$ for each $\omega$ . Obviously, $(\omega,y)\rightarrow S_{\omega}y$ is continuous in $y$. With a similar argument as in the above proof of the measurability of $Y$, one can show that $(\omega,y)\rightarrow S_{\omega}y$  is measurable in $\omega$. Then the map $(\omega,y)\rightarrow S_{\omega}y$ is jointly measurable. Then the map $S:Y\rightarrow Y$ defined by $(\omega,y)\rightarrow (\vartheta\omega, S_{\omega}y)$ constitutes a skew product transformation. It is immediate to check that $\pi_{\vartheta\omega}\circ S_{\omega}=T_{\omega}\circ \pi_{\omega}$.
This completes the proof the lemma.
\end{proof}

\begin{remark}
In the deterministic dynamical system, it is well-known that for any dynamical system $(X,\varphi)$, where $X$ is a compact metric space and $\varphi:X\rightarrow X$ is a subjective continuous map, there exists a zero-dimensional dynamical system $(Z,\psi)$ and a subjective continuous map $\tau:Z\rightarrow X$ with $\tau\circ \psi=\varphi\circ \tau$ (See e.g. \cite{Blanchard1997}). The above lemma gives a random version of this result. When $(\Omega, \mathcal{F}, P, \vartheta)$ is a trivial system, Lemma \ref{lem2}  is the result in the deterministic case. For homeomorphic bundle RDS,   through replacing the left shift by two-sided shift and $ K^{\mathbb{N}}$ by $K^{\mathbb{Z}} $ one can find that a similar result also holds.
\end{remark}

The sequel of this section is devoted to the proof of the following variational inequality of random entropy for $h_{\mu}^{(r)+}$.

\begin{theorem}
\label{theorem3.6}
Let $T$ be a homeomorphic bundle RDS on $\mathcal{E}$ over $\vartheta$ and $\mathcal{U}\in \mathcal{C}_{\mathcal{E}}^{o'}$. Then there exists a $\mu\in \mathcal{I}_P(\mathcal{E})$ such that $h_{\mu}^{(r)+}(T,\mathcal{U})\geq h_{\text{top}}(T,\mathcal{U})$.
\end{theorem}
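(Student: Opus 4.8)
The plan is to follow Misiurewicz's strategy, reducing first to a zero-dimensional base fibre so that the combinatorial construction of Lemma \ref{lemma3.5} and the upper semi-continuity of Lemma \ref{lemma3.4} become available. By Lemma \ref{lem2} there is a zero-dimensional continuous bundle RDS $S$ on $Y\subset\Omega\times K^{\mathbb{Z}}$ (using the two-sided shift, as $T$ is homeomorphic) together with a factor map $\pi:Y\to\mathcal{E}$. For $\mathcal{U}\in\mathcal{C}_{\mathcal{E}}^{o'}$ the pullback $\pi^{-1}\mathcal{U}$ is an open cover of $Y$, and Lemma \ref{lemma3.1}(1) gives $h_{\text{top}}(S,\pi^{-1}\mathcal{U})=h_{\text{top}}(T,\mathcal{U})$. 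If I can produce $\tilde\mu\in\mathcal{I}_P(Y)$ with $h_{\tilde\mu}^{(r)+}(S,\pi^{-1}\mathcal{U})\geq h_{\text{top}}(S,\pi^{-1}\mathcal{U})$, then $\mu=\pi\tilde\mu$ settles the theorem: for every $\mathcal{R}\in\mathcal{P}_{\mathcal{E}}$ with $\mathcal{R}\succeq\mathcal{U}$ one has $\pi^{-1}\mathcal{R}\succeq\pi^{-1}\mathcal{U}$, whence by Lemma \ref{lemma3.1}(2), $h_{\mu}^{(r)}(T,\mathcal{R})=h_{\tilde\mu}^{(r)}(S,\pi^{-1}\mathcal{R})\geq h_{\tilde\mu}^{(r)+}(S,\pi^{-1}\mathcal{U})$, and taking the infimum over $\mathcal{R}$ yields $h_{\mu}^{(r)+}(T,\mathcal{U})\geq h_{\text{top}}(T,\mathcal{U})$. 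Thus I may assume $X$ is zero-dimensional.

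Next I argue by contradiction. Write $c=h_{\text{top}}(T,\mathcal{U})$ and suppose $h_{\mu}^{(r)+}(T,\mathcal{U})<c$ for every $\mu\in\mathcal{I}_P(\mathcal{E})$. In the zero-dimensional setting the infimum defining $h_{\mu}^{(r)+}$ may be taken over clopen partitions finer than $\mathcal{U}$ (a routine regularity approximation keeping atoms subordinate to the cover, in the spirit of the approximation in the proof of Lemma \ref{lemma2.3}), so for each $\mu$ there is a clopen $\mathcal{R}\succeq\mathcal{U}$ with $h_{\mu}^{(r)}(T,\mathcal{R})<c$. Since clopen partitions have empty fibre boundary, Lemma \ref{lemma3.4}(b) makes $\nu\mapsto h_{\nu}^{(r)}(T,\mathcal{R})$ upper semi-continuous, so $\{\nu:h_{\nu}^{(r)}(T,\mathcal{R})<c\}$ is open. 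These sets cover the weak* compact space $\mathcal{I}_P(\mathcal{E})$; extracting a finite subcover produces clopen partitions $\mathcal{R}_1,\dots,\mathcal{R}_K\succeq\mathcal{U}$ such that $\min_{1\leq l\leq K}h_{\nu}^{(r)}(T,\mathcal{R}_l)<c$ for every $\nu\in\mathcal{I}_P(\mathcal{E})$.

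Now I build a single invariant measure violating this. Applying Lemma \ref{lemma3.5} to $\{\mathcal{R}_l\}_{l=1}^K$ yields, for every $n$, a measurable family $B_n(\omega)\subset\mathcal{E}_{\omega}$ with $\#B_n(\omega)\geq[N(T,\omega,\mathcal{U},n)/K]$ such that each atom of $(\mathcal{R}_l)_0^{n-1}(\omega)$ meets $B_n(\omega)$ in at most one point, for all $l$. Let $\nu_n\in\mathcal{P}_P(\mathcal{E})$ have fibre measures $\nu_{n,\omega}=\frac{1}{\#B_n(\omega)}\sum_{x\in B_n(\omega)}\delta_x$, set $\mu_n=\frac1n\sum_{k=0}^{n-1}\Theta^k\nu_n$, and pass to a weak* limit $\mu=\lim_j\mu_{n_j}$, which lies in $\mathcal{I}_P(\mathcal{E})$ by Lemma \ref{lem3}. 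The separation property gives $H_{\nu_{n,\omega}}((\mathcal{R}_l)_0^{n-1}(\omega))=\log\#B_n(\omega)$, hence $H_{\nu_n}((\mathcal{R}_l)_0^{n-1}\mid\mathcal{F}_{\mathcal{E}})=\int\log\#B_n(\omega)\,dP\geq\int\log[N(T,\omega,\mathcal{U},n)/K]\,dP$, and dividing by $n$ the right-hand side tends to $c$, since the integer part and the constant $\log K$ are negligible against $\frac1n H(T,\mathcal{U},n)\to c$.

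Finally I run the random Misiurewicz estimate. Fix $q\in\mathbb{N}$ and $l$; decomposing $\{0,\dots,n-1\}$ into blocks of length $q$ and using subadditivity of the conditional entropy $H_{\cdot}(\cdot\mid\mathcal{F}_{\mathcal{E}})$ (Lemma \ref{lemma2.2}) together with the averaging into $\mu_n$, one obtains $\frac1q H_{\mu_n}((\mathcal{R}_l)_0^{q-1}\mid\mathcal{F}_{\mathcal{E}})\geq\frac1n H_{\nu_n}((\mathcal{R}_l)_0^{n-1}\mid\mathcal{F}_{\mathcal{E}})-\frac{C_q}{n}$ with $C_q$ of order $q\log\#\mathcal{R}_l$. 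Taking $\limsup$ along $n_j$, the upper semi-continuity of $\nu\mapsto H_{\nu}((\mathcal{R}_l)_0^{q-1}\mid\mathcal{F}_{\mathcal{E}})$ (Lemma \ref{lemma3.4}(a), valid since $\mathcal{R}_l$ is clopen) gives $\frac1q H_{\mu}((\mathcal{R}_l)_0^{q-1}\mid\mathcal{F}_{\mathcal{E}})\geq c$; letting $q\to\infty$ yields $h_{\mu}^{(r)}(T,\mathcal{R}_l)\geq c$ for every $l$, so $\min_l h_{\mu}^{(r)}(T,\mathcal{R}_l)\geq c$, contradicting the previous paragraph. This contradiction establishes the existence of the desired $\mu$. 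I expect the main obstacle to be the fibrewise Misiurewicz inequality of the last paragraph: correctly propagating the conditioning on $\mathcal{F}_{\mathcal{E}}$ through the block decomposition and the Ces\`aro averaging, and justifying the semicontinuous passage to the limit. The zero-dimensional reduction is precisely what makes the relevant partitions clopen, so that both Lemma \ref{lemma3.5} and the semicontinuity of Lemma \ref{lemma3.4} apply.
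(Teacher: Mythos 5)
Your proof is correct, but it reaches the conclusion by a genuinely different route from the paper's. The paper argues directly: in the zero-dimensional case it enumerates \emph{all} clopen product-form partitions $\{\mathcal{R}_l\}_{l\geq 1}$ finer than $\mathcal{U}$ and, at stage $n$, applies Lemma \ref{lemma3.5} with $K=n$ to the first $n$ of them (and to $(\Theta^n)^{-1}\mathcal{U}$ over $n^2$ steps), so that the single weak* limit $\mu$ satisfies $h^{(r)}_{\mu}(T,\mathcal{R}_l)\geq h_{\text{top}}(T,\mathcal{U})$ for \emph{every} $l$ simultaneously, the loss factor $1/n$ being absorbed in the limit; density of $\{\mathcal{R}_l\}$ in $\mathcal{U}^*$ then gives the bound on the infimum. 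You instead argue by contradiction, using the upper semi-continuity of $\nu\mapsto h^{(r)}_{\nu}(T,\mathcal{R})$ for clopen $\mathcal{R}$ (Lemma \ref{lemma3.4}) together with the weak* compactness of $\mathcal{I}_P(\mathcal{E})$ to extract finitely many clopen partitions $\mathcal{R}_1,\dots,\mathcal{R}_K$ with $\min_l h^{(r)}_{\nu}(T,\mathcal{R}_l)<h_{\text{top}}(T,\mathcal{U})$ for all $\nu$, and then run the Misiurewicz construction with this fixed $K$. What your route buys is a cleaner invocation of Lemma \ref{lemma3.5} (fixed $K$, plain $\mathcal{U}$ over $n$ steps, none of the $n^2+n$ bookkeeping); what it costs is non-constructiveness and the extra compactness step, and it yields the lower bound only for the finitely many partitions produced by the subcover rather than for all clopen refinements at once. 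Both arguments rest on the same two facts stated rather than proved in detail: that the infimum defining $h^{(r)+}_{\mu}(T,\mathcal{U})$ may be restricted to clopen product-form partitions (density in $\mathcal{U}^*$ plus continuity of $\mathcal{R}\mapsto h^{(r)}_{\mu}(T,\mathcal{R})$ in the $L^1(\mu)$ partition metric), and that Lemma \ref{lemma3.4} applies to the iterated partitions $(\mathcal{R}_l)_0^{q-1}$, which are no longer of product form but still have clopen fibres. Your reduction to the zero-dimensional case via Lemma \ref{lem2} (two-sided version) and the transfer back through the factor map match the paper's Case 2 exactly.
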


\begin{proof}

We adopt the argument in \cite{Huang2006} for the deterministic dynamical system.

Let $\mathcal{U}=\{\Omega\times U_i\}_{i=1}^d\cap \mathcal{E}\in \mathcal{C}_{\mathcal{E}}^{o'}$. Then $A=\{U_i\}_{i=1}^d$ is an open cover of $X$. Define
\begin{equation*}
\mathcal{U}^*=\{\mathcal{R}\in \mathcal{P}_{\mathcal{E}}: \mathcal{R}=\{\Omega\times R_i\}_{i=1}^d\cap\mathcal{E}, R_m\subset U_m \,\, \text{for all}\,\, 1\leq m\leq d         \}.
\end{equation*}

Case 1. Assume that $X$ is zero-dimensional. Then the family of partitions finer than $\mathcal{U}$, consisting of sets $\{\Omega\times R_i\}\cap\mathcal{E}$ with each $R_i$ being clopen sets, is countable. Let $\{\mathcal{R}_l:l\geq 1\}$ be the enumeration of this family. It is clear that $\{\mathcal{R}_l\}\subset \mathcal{U}^*$.

Fix $n\in \mathbb{N}$.  By Lemma \ref{lemma3.5}, there exists a measurable in $\omega$ family of  maximal subsets $C_n(\omega)$ of $\mathcal{E_{\omega}}$
$$
C_n(\omega)=B_{n^2}\big( (\Theta^n)^{-1}\mathcal{U},
\{(\Theta^n)^{-1}\mathcal{R}_l\}_{l=1}^n,\omega \big)
\subset \mathcal{E}_{\omega}
$$
with cardinality at least $\big[N(T,\omega,(\Theta^n)^{-1}\mathcal{U},n^2)/n\big]$ such that any atom of $(\mathcal{R}_l)_n^{n^2+n-1}(\omega)$ contains at most one point of $C_n(\omega)$ for all $1\leq l\leq n$.
Next, define probability measures $\nu_n$ on $\mathcal{E}$ via their measurable disintegrations $\nu_{n,\omega}$, where $\nu_{n,\omega}$ is the equidistributed probability measure on $C_n(\omega)$,
so that $d\nu_n(\omega,x)=d\nu_{n,\omega}(x)dP(\omega)$. Then for each $0\leq i, l\leq n$, every element of
$(\Theta^i)^{-1}(\mathcal{R}_l)_0^{n^2+n-1}(\omega)
=(T_{\omega}^i)^{-1}\bigvee_{j=0}^{n^2+n-1}(T_{\omega}^j)^{-1}\mathcal{R}_l(\vartheta^{j+i}\omega)$, which is finer than
$(\mathcal{R}_l)_n^{n^2+n-1}(\omega)
=\bigvee_{j=n}^{n^2+n-1}(T_{\omega}^j)^{-1}\mathcal{R}_l(\vartheta^j\omega)$, also contains at most one atom of the discrete measure $\nu_{n,\omega}$.
Since for each $\omega$,
\begin{align*}
N(T,\omega,\mathcal{U},n^2+n)
&\leq N(T, \omega,\mathcal{U},n)\cdot N(T,\omega,(\Theta^n)^{-1}\mathcal{U},n^2)\\
&\leq d^n N(T,\omega,(\Theta^n)^{-1}\mathcal{U},n^2),
\end{align*}
we have for any $1\leq i,  l\leq n$,
\begin{align}
\label{eq3.2}
H_{T_{\omega}^i\nu_{n,\omega}}&(\bigvee_{j=0}^{n^2+n-1}(T^j_{\omega})^{-1}\mathcal{R}_l(\vartheta^{j+i}\omega)
)
=H_{\nu_{n,\omega}}((T_{\omega}^i)^{-1}\bigvee_{j=0}^{n^2+n-1}(T^j_{\omega})^{-1}\mathcal{R}_l(\vartheta ^{j+i}\omega))\notag\\
&\geq\log [\frac{1}{n}N(T,\omega,(\Theta^n)^{-1}\mathcal{U},n^2)]\geq \log[\frac{1}{nd^n}N(T,\omega,\mathcal{U},n^2+n)].
\end{align}

Since $\nu_{n,\omega}$ is supported by $\mathcal{E}_{\omega}$, $T_{\omega}^i\nu_{n,\omega}$ is supported by $\mathcal{E}_{\vartheta^i\omega}$,
for all $1\leq i,l\leq n$, integrating in \eqref{eq3.2} against $P$,  we have by \eqref{kifer2.1} the inequality
\begin{align*}
H_{\Theta^i\nu_n}((\mathcal{R}_l)_0^{n^2+n-1}\mid \mathcal{F}_{\mathcal{E}})
&=H_{\nu_n}\big((\Theta^i)^{-1}(\mathcal{R}_l)_0^{n^2+n-1}\mid \mathcal{F}_{\mathcal{E}}\big)\\
&\geq \int \log[\frac{1}{nd^n}N(T,\omega,\mathcal{U},n^2+n)]dP(\omega).
\end{align*}

Fix $m\in \mathbb{N}$ with $m\leq n$, and let $n^2+n=km+b$, where $0\leq b\leq m-1$. Then for $1\leq i,l\leq n$, we have
\begin{align*}
&H_{\Theta^i\nu_n}((\mathcal{R}_l)_0^{n^2+n-1}\mid \mathcal{F}_{\mathcal{E}})\\
&=H_{\Theta^i\nu_n}\big((\mathcal{R}_l)_{km}^{n^2+n-1}\vee \bigvee_{j=0}^{k-1}(\Theta^{mj})^{-1}(\mathcal{R}_l)_0^{m-1}\mid \mathcal{F}_{\mathcal{E}}\big)\\
&\leq H_{\Theta^i\nu_n}((\mathcal{R}_l)_{km}^{n^2+n-1}\mid \mathcal{F}_{\mathcal{E}})+\sum_{j=0}^{k-1}H_{\Theta^i\nu_n}\big( (\Theta^{mj})^{-1} (\mathcal{R}_l)_0^{m-1} \mid \mathcal{F}_{\mathcal{E}}\big)\\
&\leq \sum_{j=0}^{k-1}H_{\Theta^{i+mj}\nu_n}((\mathcal{R}_l)_0^{m-1} \mid \mathcal{F}_{\mathcal{E}})+m\log d \quad(\text{by Lemma \ref{lemma2.2} (1) and (4)})
\end{align*}

Summing over $0\leq i\leq m-1$  for each $1\leq l\leq n$, we get
\begin{align*}
m\int\log[\frac{1}{nd^n} & N(T,\omega,\mathcal{U},n^2+n)]dP(\omega)
\leq \sum_{i=0}^{m-1}H_{\Theta^i\nu_n}((\mathcal{R}_l)_0^{n^2+n-1}\mid \mathcal{F}_{\mathcal{E}})\\
&\leq \sum_{i=0}^{m-1}\sum_{j=0}^{k-1}H_{\Theta^{i+jm}\nu_n}((\mathcal{R}_l)_0^{m-1}\mid \mathcal{F}_{\mathcal{E}}  )+m^2\log d\\
&\leq \sum_{i=0}^{n^2+n-1}H_{\Theta^i\nu_n}((\mathcal{R}_l)_0^{m-1}\mid \mathcal{F}_{\mathcal{E}}  )+m^2\log d.
\end{align*}

Denote
$$\mu_n=\frac{1}{n^2+n}\sum_{i=0}^{n^2+n-1}\Theta^i\nu_n.$$
Since the function $\mu\rightarrow H_{\mu} ((\mathcal{R}_l)_0^{m-1}\mid \mathcal{F}_{\mathcal{E}}  )$ is concave on $\mathcal{P}_P(\mathcal{E})$, by Lemma \ref{lemma3.3} part (1), we get for each $1\leq l\leq n$,
\begin{align}\label{ineq3.9}
H_{\mu_n} &((\mathcal{R}_l)_0^{m-1}\mid \mathcal{F}_{\mathcal{E}} )\geq \frac{1}{n^2+n}\sum_{i=0}^{n^2+n-1} H_{\Theta^i\nu_n}((\mathcal{R}_l)_0^{m-1}\mid \mathcal{F}_{\mathcal{E}} )\notag\\
&\geq \frac{m}{n^2+n}\big( \int\log[\frac{1}{nd^n} N(T,\omega,\mathcal{U},n^2+n)]dP(\omega)  -m\log d   \big).
\end{align}

Suppose that $\mu_{n_k}\rightarrow \mu$ as $k\rightarrow \infty$ with $\mu \in \mathcal{P}_P(\mathcal{E})$ in the weak* topology. Then by Lemma \ref{lem3}, $\mu \in \mathcal{I}_P(\mathcal{E})$. Fixing $m\in \mathbb{N}$, we have
\begin{equation}\label{eq3.10}
\lim_{k\rightarrow \infty}\frac{1}{n_k^2+n_k}\big( \int\log[\frac{1}{n_kd^{n_k}} N(T,\omega,\mathcal{U},n_k^2+n_k)]dP(\omega) -m\log d   \big)= h_{\text{top}}(T,\mathcal{U}).
\end{equation}

Since any element of $(\mathcal{R}_l)_0^{m-1}(\omega)$  is clopen for each $\omega\in \Omega$, by Lemma \ref{lemma3.4}, equation \eqref{eq3.10}, and replacing $n$ by $n_k$ in the inequality \eqref{ineq3.9} and letting $k\rightarrow +\infty$, we get
$$
\frac{1}{m}H_{\mu}((\mathcal{R}_l)_0^{m-1}\mid \mathcal{F}_{\mathcal{E}})\geq h_{\text{top}}(T,\mathcal{U}),
$$
for any $l, m\in \mathbb{N}$.

Fixing $l\in \mathbb{N}$ and letting $m\rightarrow \infty$, we get $h_{\mu}^{(r)}(T,\mathcal{R}_l)\geq h_{\text{top}}(T,\mathcal{U})$. Since $X$ is zero-dimensional, $\{\mathcal{R}_l\}_{l\geq 1}$ is dense in $\mathcal{U}^*$ with respect to the distance associated with $L^1(\mu)$. Hence
\begin{equation*}
h_{\mu}^{(r)+}(T,\mathcal{U})=\inf_{\mathcal{Q}\in \mathcal{P}_{\mathcal{E}}, \mathcal{Q}\succeq \mathcal{U}}h_{\mu}^{(r)}(T,\mathcal{Q})=\inf_{l\in \mathbb{N}}h_{\mu}^{(r)}(T,\mathcal{R}_l)\geq h_{\text{top}}(T,\mathcal{U}).
\end{equation*}
This completes the proof of Case 1.

Case 2. This is the general case.
By Lemma \ref{lem2}, there exists a homeomorphic bundle RDS $S$ on $\mathcal{G}\subset \Omega\times Z$ over $\vartheta$, where $Z$ is zero-dimensional, and a factor map $\psi:\mathcal{G}\rightarrow \mathcal{E}$ such that $T\circ \psi=\psi\circ S$.

By Case 1 and Lemma \ref{lemma3.1}, there exists $\nu\in \mathcal{I}_P(\mathcal{G})$ such that $h_{\nu}^{(r)+}(S,\psi^{-1}(\mathcal{U}))\geq h_{\text{top}}(S,\psi^{-1}(\mathcal{U}))=h_{\text{top}}(T,\mathcal{U})$. This means that for each partitions $\mathcal{R}$ of $\mathcal{G}$ finer than $\psi^{-1}(\mathcal{U})$, $h_{\nu}^{(r)}(S,\mathcal{R})\geq h_{\text{top}}(T,\mathcal{U})$. Let $\mu=\psi \nu$, then $\mu \in \mathcal{I}_P(\mathcal{E})$ \cite{Liupeidong2005}. It follows that for any partition $\mathcal{Q}$ of $\mathcal{E}$ finer than $\mathcal{U}$, $\psi^{-1}(\mathcal{Q})$ is also a partition of $\mathcal{G}$ finer than $\psi^{-1}(\mathcal{U})$. By Lemma \ref{lemma3.1} we have $h_{\mu}^{(r)}(T,\mathcal{Q})=h_{\nu}^{(r)}(S,\psi^{-1}(\mathcal{Q}))\geq h_{\text{top}}(T,\mathcal{U})$. Hence $h_{\mu}^{(r)+}(T,\mathcal{U})\geq h_{\text{top}}(T,\mathcal{U})$ and we complete the proof.
\end{proof}

\section{A local variational principle for $h_{\mu}^{(r)-}$}\label{section4}

In this section, we will prove a local variational principle for a certain class of covers of  $\Omega\times X$, i.e. for $\mathcal{U}\in \mathcal{C}_{\Omega\times X}^{o'}$. We first give some relations between the two kinds of  random measure-theoretical entropy for covers, then use these results and Theorem \ref{theorem3.6} to prove this local variational principle.

\begin{lemma}
\label{lemma4.1}
Let $T$ be a homeomorphic bundle RDS on $\mathcal{E}$ over $\vartheta$ and $\mu\in\mathcal{I}_P(\mathcal{E}) $. Then for $\mathcal{U}\in \mathcal{C}_{\mathcal{E}}$, we have
\renewcommand{\theenumi}{(\arabic{enumi})}
\begin{enumerate}
\item $h_{\mu}^{(r)-}(T,\mathcal{U})\leq h_{\mu}^{(r)+}(T,\mathcal{U})$;\label{411}
\item $h_{\mu}^{(r)-}(T,\mathcal{U})=(1/M)h_{\mu}^{(r)-}(T^M,\mathcal{U}_0^{M-1})$ for each $M\in \mathbb{N}$;\label{412}
\item $h_{\mu}^{(r)-}(T,\mathcal{U})=\lim_{n\rightarrow \infty}(1/n)h_{\mu}^{(r)+}(T^n,\mathcal{U}_0^{n-1})$.\label{413}
\end{enumerate}
\end{lemma}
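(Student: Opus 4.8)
The plan is to treat the three parts in order, leveraging two standard facts already available: the subadditivity of $n\mapsto H_{\mu}(\mathcal{U}_0^{n-1}\mid\mathcal{F}_{\mathcal{E}})$ (noted after Lemma~\ref{lemma2.2} from parts \ref{lemma2.2p3} and \ref{lemma2.2p4}), and the elementary one-step bound $h_{\mu}^{(r)}(T,\mathcal{Q})\le H_{\mu}(\mathcal{Q}\mid\mathcal{F}_{\mathcal{E}})$ obtained by retaining only the $m=1$ term of the infimum defining $h_{\mu}^{(r)}(T,\mathcal{Q})$. Part \ref{411} I would argue straight from the definitions: for $\mathcal{Q}\in\mathcal{P}_{\mathcal{E}}$ with $\mathcal{Q}\succeq\mathcal{U}$ one has $\mathcal{Q}_0^{n-1}\succeq\mathcal{U}_0^{n-1}$, and since $\mathcal{Q}_0^{n-1}$ is a partition, the infimum defining the cover entropy gives $H_{\mu}(\mathcal{U}_0^{n-1}\mid\mathcal{F}_{\mathcal{E}})\le H_{\mu}(\mathcal{Q}_0^{n-1}\mid\mathcal{F}_{\mathcal{E}})$; dividing by $n$, letting $n\to\infty$, and taking the infimum over such $\mathcal{Q}$ yields $h_{\mu}^{(r)-}(T,\mathcal{U})\le h_{\mu}^{(r)+}(T,\mathcal{U})$.

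For part \ref{412} the key is the reindexing identity
\[
\bigvee_{j=0}^{n-1}(\Theta^{M})^{-j}\mathcal{U}_0^{M-1}
=\bigvee_{j=0}^{n-1}\bigvee_{i=0}^{M-1}\Theta^{-(Mj+i)}\mathcal{U}
=\mathcal{U}_0^{Mn-1}.
\]
Consequently $h_{\mu}^{(r)-}(T^{M},\mathcal{U}_0^{M-1})=\lim_{n}\tfrac1n H_{\mu}(\mathcal{U}_0^{Mn-1}\mid\mathcal{F}_{\mathcal{E}})$, whence $\tfrac1M h_{\mu}^{(r)-}(T^{M},\mathcal{U}_0^{M-1})=\lim_{n}\tfrac{1}{Mn}H_{\mu}(\mathcal{U}_0^{Mn-1}\mid\mathcal{F}_{\mathcal{E}})$. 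Since subadditivity makes $\tfrac1N H_{\mu}(\mathcal{U}_0^{N-1}\mid\mathcal{F}_{\mathcal{E}})$ converge to $h_{\mu}^{(r)-}(T,\mathcal{U})$ as $N\to\infty$, its subsequence along $N=Mn$ has the same limit, giving \ref{412}. Here I would only note that $\mu$ is $\Theta^{M}$-invariant and $\vartheta^{M}$ is invertible, so $h_{\mu}^{(r)-}(T^{M},\cdot)$ is well defined.

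For part \ref{413} I would establish, for every $n$, the double inequality
\[
h_{\mu}^{(r)-}(T,\mathcal{U})\;\le\;\tfrac1n h_{\mu}^{(r)+}(T^{n},\mathcal{U}_0^{n-1})\;\le\;\tfrac1n H_{\mu}(\mathcal{U}_0^{n-1}\mid\mathcal{F}_{\mathcal{E}}),
\]
and then let $n\to\infty$. The left inequality follows by applying \ref{411} to the system $T^{n}$ and the cover $\mathcal{U}_0^{n-1}$, namely $h_{\mu}^{(r)-}(T^{n},\mathcal{U}_0^{n-1})\le h_{\mu}^{(r)+}(T^{n},\mathcal{U}_0^{n-1})$, and then using \ref{412} with $M=n$ to identify $h_{\mu}^{(r)-}(T^{n},\mathcal{U}_0^{n-1})=n\,h_{\mu}^{(r)-}(T,\mathcal{U})$. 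The right inequality carries the real content: for each partition $\mathcal{Q}\succeq\mathcal{U}_0^{n-1}$ the one-step bound gives $h_{\mu}^{(r)}(T^{n},\mathcal{Q})\le H_{\mu}(\mathcal{Q}\mid\mathcal{F}_{\mathcal{E}})$, so taking the infimum over all such $\mathcal{Q}$ turns the left side into $h_{\mu}^{(r)+}(T^{n},\mathcal{U}_0^{n-1})$ and the right side into $H_{\mu}(\mathcal{U}_0^{n-1}\mid\mathcal{F}_{\mathcal{E}})$. As $n\to\infty$ the right-hand side of the display tends to $h_{\mu}^{(r)-}(T,\mathcal{U})$ while the left-hand side is constant, so the squeeze forces convergence of the middle term (and in particular the existence of the limit), establishing \ref{413}.

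The main, and rather mild, obstacle is the bookkeeping of the time change: verifying the reindexing identity in \ref{412} and recognizing that the one-step bound, combined with the infimum over refining partitions, is already enough to dominate $h_{\mu}^{(r)+}(T^{n},\mathcal{U}_0^{n-1})$ by $H_{\mu}(\mathcal{U}_0^{n-1}\mid\mathcal{F}_{\mathcal{E}})$ \emph{uniformly in} $n$. No combinatorial estimate or measurable-selection construction (as in Lemma~\ref{lemma3.5}) is needed, since the subadditivity has already done the analytic work.
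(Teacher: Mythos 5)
Your proposal is correct and follows essentially the same route as the paper: the paper declares (1) and (2) obvious (you supply the standard refinement and reindexing arguments) and proves (3) by exactly your chain $h_{\mu}^{(r)-}(T,\mathcal{U})=\tfrac1n h_{\mu}^{(r)-}(T^{n},\mathcal{U}_0^{n-1})\le\tfrac1n h_{\mu}^{(r)+}(T^{n},\mathcal{U}_0^{n-1})\le\tfrac1n H_{\mu}(\mathcal{U}_0^{n-1}\mid\mathcal{F}_{\mathcal{E}})$ followed by letting $n\to\infty$.
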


\begin{proof}
Part \ref{411} and \ref{412} is obvious. We only prove part \ref{413}. By part \ref{411} and \ref{412}, for any $M\in \mathbb{N}$, we have
\begin{align*}
h_{\mu}^{(r)-}(T,\mathcal{U})&=\frac{1}{M}h_{\mu}^{(r)-}(T^M,\mathcal{U}_0^{M-1})\\
&\leq  \frac{1}{M}h_{\mu}^{(r)+}(T^M,\mathcal{U}_0^{M-1})\leq \frac{1}{M}H_{\mu}(\mathcal{U}_0^{M-1}\mid \mathcal{F}_{\mathcal{E}}).
\end{align*}
Taking the limit when $n\rightarrow +\infty$, we complete the argument.
\end{proof}

 For each $k\in \mathbb{N}$, let $\mathcal{I}_P(T^k,\mathcal{E})=\{\mu\in \mathcal{P}_P(\mathcal{E}):\mu \,\,\text{is}\,\, \Theta^k\text{-invariant}\}$. $\mathcal{I}_P(T,\mathcal{E})$ is simply written as the usual $\mathcal{I}_P(\mathcal{E})$.

Now we prove Theorem \ref{theorem2.5}.

\begin{proof}[Proof of Theorem \ref{theorem2.5}]
We follow the method applied in  \cite{Huang2006} and \cite{Huang2004} for the deterministic system.

Let $\mathcal{U}=\{\Omega\times U_i\}_{i=1}^d\cap\mathcal{E}\in \mathcal{C}_{\mathcal{E}}^{o'}$. Then $A=\{U_i\}_{i=1}^d$ is an open cover of $X$. Define
\begin{equation*}
\mathcal{U}^*=\{\mathcal{R}\in \mathcal{P}_{\mathcal{E}}: \mathcal{R}=\{\Omega\times R_i\}_{i=1}^d\cap\mathcal{E}, R_m\subset U_m \,\, \text{for all}\,\, 1\leq i\leq d         \}.
\end{equation*}

Case 1. Assume that $X$ is zero-dimensional. Then the family of partitions finer than $\mathcal{U}$, consisting of sets $\{\Omega\times R_i\}\cap\mathcal{E}$ with each $R_i$ being clopen sets, is countable. Let $\{\mathcal{R}_l:l\geq 1\}$ be the enumeration of this family. It is clear that $\{\mathcal{R}_l\}\subset \mathcal{U}^*$. Moreover, for any $k\in \mathbb{N}$ and $\mu\in\mathcal{I}_P(\mathcal{E})$,
\begin{equation}\label{eq4.1}
h_{\mu}^{(r)+}(T^k,\mathcal{U}_0^{k-1})=\inf_{s_k\in \mathbb{N}^k}h_{\mu}^{(r)}(T^k,\bigvee_{i=0}^{k-1}(\Theta^i)^{-1}\mathcal{R}_{s_k(i)}).
\end{equation}

For any $k\in \mathbb{N}$ and $s_k\in \mathbb{N}^k$, denote
\begin{align*}
M(k,s_k)=\big\{\mu \in \mathcal{I}_P(\mathcal{E}): &\frac{1}{k}h_{\mu}^{(r)} (T^k,\bigvee_{i=0}^{k-1}(\Theta^i)^{-1}\mathcal{R}_{s_k(i)})\\
         &\geq \frac{1}{k}h_{\text{top}}(T^k,\mathcal{U}_0^{k-1})=h_{\text{top}}(T,\mathcal{U})  \big\}.
\end{align*}
By Theorem \ref{theorem3.6} we know that there exists $\mu_k\in \mathcal{I}_P(T^k,\mathcal{E})$ such that
\begin{equation*}
h_{\mu_k}^{(r)+}(T^k,\mathcal{U}_0^{k-1})\geq h_{\text{top}}(T^k,\mathcal{U}_0^{k-1}).
\end{equation*}
Since $\bigvee_{i=0}^{k-1}(\Theta^i)^{-1}\mathcal{R}_{s_k(i)}$ is finer than $\mathcal{U}_0^{k-1}$ for each $s_k\in \mathbb{N}^k$, one has
\begin{equation}\label{ineq4.2}
h_{\mu_k}^{(r)}(T^k,\bigvee_{i=0}^{k-1}(\Theta^i)^{-1}\mathcal{R}_{s_k(i)})\geq h_{\text{top}}(T^k,\mathcal{U}_0^{k-1}).
\end{equation}

Let
 $$\nu_k=\frac{1}{k}\sum_{i=0}^{k-1}\Theta^i\mu_k.$$
As for each $0\leq i\leq k-1$, $\Theta^i\mu_k\in \mathcal{I}_P(T^k,\mathcal{E})$, one has $\nu_k\in \mathcal{I}_P(\mathcal{E})$. For $s_k\in \mathbb{N}^k$ and $1\leq j\leq k-1$, denote
\begin{equation*}
P^js_k=\underbrace{s_k(k-j)s_k(k-j+1)\dots s_k(k-1)}_j\underbrace{s_k(0)s_k(1)\dots s_k(k-j-1)}_{k-j}\in \mathbb{N}^k
\end{equation*}
and $P^0s_k=s_k$. It is easy to see that for $0\leq j\leq k-1$,
\begin{align*}
h_{\Theta^j\mu_k}^{(r)}(T^k, \bigvee_{i=0}^{k-1}(\Theta^i)^{-1}\mathcal{R}_{s_k(i)})
&=h_{\mu_k}^{(r)}(T^k,\bigvee_{i=0}^{k-1}(\Theta^i)^{-1}\mathcal{R}_{P^js_k(i)})\\
&\geq h_{\text{top}}(T^k,\mathcal{U}_0^{k-1}) \quad(\text{by inequality \eqref{ineq4.2}}).
\end{align*}
Moreover by Lemma \ref{lemma3.3} part (2) for each $s_k\in \mathbb{N}^k$,
\begin{align*}
h_{\nu_k}^{(r)}(T^k, \bigvee_{i=0}^{k-1}(\Theta^i)^{-1}\mathcal{R}_{s_k(i)})
&=\frac{1}{k}\sum_{i=0}^{k-1}h_{\Theta^i\mu_k}^{(r)}(T^k, \bigvee_{i=0}^{k-1}(\Theta^i)^{-1}\mathcal{R}_{s_k(i)})\\
&\geq h_{\text{top}}(T^k,\mathcal{U}_0^{k-1}).
\end{align*}
This means that $\nu_k\in \bigcap_{s_k\in \mathbb{N}^k}M(k,s_k)$. Let $M(k)=\bigcap_{s_k\in \mathbb{N}^k}M(k,s_k)$, Then $M(k)$ is non-empty subset of $\mathcal{I}_P(\mathcal{E})$.

By Lemma \ref{lemma3.4} part (b), for each $s_k\in \mathbb{N}^k$ the map $\mu\rightarrow h_{\mu}^{(r)}(T^k, \bigvee_{i=0}^{k-1}(\Theta^i)^{-1}\mathcal{R}_{s_k(i)})$ is a u.s.c. function from $\mathcal{I}_P(T^k,\mathcal{E})$ to $\mathbb{R}$. Since $\mathcal{I}_P(\mathcal{E})\subset \mathcal{I}_P(T^k,\mathcal{E})$, the map $\mu\rightarrow h_{\mu}^{(r)}(T^k, \bigvee_{i=0}^{k-1}(\Theta^i)^{-1}\mathcal{R}_{s_k(i)})$ is also u.s.c. on $\mathcal{I}_P(\mathcal{E})$. Therefore, $M(k,s_k)$ is closed in  $\mathcal{I}_P(\mathcal{E})$ for each $s_k\in \mathbb{N}^k$. Thus $M(k)$ is a non-empty closed subset of  $\mathcal{I}_P(\mathcal{E})$.

Now we show that if $k_1, k_2\in \mathbb{N}$ with $k_1\mid k_2$ then $M(k_2)\subset M(k_1)$. Let $\mu\in M(k_2)$ and $k=k_2/k_1$. For any $s_{k_1}\in \mathbb{N}^{k_1}$, let $s_{k_2}=s_{k_1}\dots s_{k_1}\in \mathbb{N}^{k_2}$. Then
\begin{align*}
h_{\text{top}}(T,\mathcal{U})
&\leq \frac{1}{k_2}h_{\mu}^{(r)}(T^{k_2},\bigvee_{i=0}^{k_2-1}(\Theta^i)^{-1}\mathcal{R}_{s_{k_2}(i)})\\
&=\frac{1}{k_1}\big[\frac{1}{k}h_{\mu}^{(r)}( T^{kk_1},  \bigvee_{j=0}^{k-1}(\Theta^{jk_1})^{-1}\bigvee_{i=0}^{k_1-1}(\Theta^i)^{-1}  \mathcal{R}_{s_{k_1}(i)}             ) \big]\\
&=\frac{1}{k_1}h_{\mu}^{(r)}(T^{k_1},\bigvee_{i=0}^{k_1-1}(\Theta^i)^{-1}\mathcal{R}_{s_{k_1}(i)}).
\end{align*}
Hence $\mu \in M(k_1,s_{k_1})$. Then $\mu \in M(k_1)$.

Since for each $k_1, k_2\in \mathbb{N}$, $M(k_1)\cap M(k_2)\supset M(k_1k_2)\neq \emptyset$, then $\bigcap_{k\in\mathbb{N}}M(k)\neq \emptyset$.
Take $\mu \in \bigcap_{k\in\mathbb{N}}M(k) $. For any $k\in \mathbb{N}$, by equation \eqref{eq4.1} one has
\begin{equation*}
\frac{1}{k}h_{\mu}^{(r)+}(T^k,\mathcal{U}_0^{k-1})=\inf_{s_k\in \mathbb{N}^k}\frac{1}{k}h_{\mu}^{(r)}(T^k, \bigvee_{i=0}^{k-1}(\Theta^i)^{-1}\mathcal{R}_{s_k(i)})\geq h_{\text{top}}(T,\mathcal{U}).
\end{equation*}
Moreover, by Lemma \ref{lemma4.1} part \ref{413}, one gets that
\begin{equation*}
h_{\mu}^{(r)-}(T,\mathcal{U})=\lim_{k\rightarrow +\infty}\frac{1}{k}h_{\mu}^{(r)+}(T^k, \mathcal{U}_0^{k-1})\geq h_{\text{top}}(T,\mathcal{U}).
\end{equation*}
Following Lemma \ref{lemma3.3} part (2), we have $h_{\mu}^{(r)-}(T,\mathcal{U})=h_{\text{top}}(T,\mathcal{U})$. This ends the proof of Case 1.

Case 2. This is the general case.

By Lemma \ref{lem2}, there exists a homeomorphic bundle RDS $S$ on $\mathcal{G}\subset \Omega\times Z$ over $\vartheta$, where $Z$ is zero-dimensional, and a factor map $\psi: \mathcal{G}\rightarrow \mathcal{E}$.  Let $\mathcal{V}=\psi^{-1}\mathcal{U}$. By Lemma \ref{lemma3.1} part (1), one has $h_{\text{top}}(S,\mathcal{V})=h_{\text{top}}(T,\mathcal{U})$. By Case 1, there exists $\nu\in \mathcal{I}_P(\mathcal{G})$ such that $h_{\mu}^{(r)-}(S,\mathcal{V})=h_{\text{top}}(S,\mathcal{V})$. Let $\mu=\psi\nu$, then $\mu\in \mathcal{I}_P(\mathcal{E})$. Note that if $N\in\mathbb{N}$ and $\mathcal{R}\in \mathcal{P}_{\mathcal{E}}$ is finer than $\mathcal{U}_0^{N-1}$, then $\psi^{-1}(\mathcal{R})\in \mathcal{P}_{\mathcal{G}}$ is finer than $\mathcal{V}_0^{N-1}$. Thus
$H_{\mu}(\mathcal{R}\mid \mathcal{F}_{\mathcal{E}})=H_{\nu}(\psi^{-1}\mathcal{R}\mid\psi^{-1}\mathcal{F}_{\mathcal{E}})
=H_{\nu}(\psi^{-1}\mathcal{R}\mid \mathcal{F}_{\mathcal{G}} )\geq H_{\nu}(\mathcal{V}_0^{N-1}\mid \mathcal{F}_{\mathcal{G}})$. Hence $ H_{\mu}(\mathcal{U}_0^{N-1}\mid \mathcal{F}_{\mathcal{E}})\geq H_{\nu}(\mathcal{V}_0^{N-1}\mid \mathcal{F}_{\mathcal{G}})$ for each $N\in \mathbb{N}$. Thus one get
\begin{align*}
h_{\mu}^{(r)-}(T,\mathcal{U})&=\lim_{N\rightarrow +\infty}\frac{1}{N}H_{\mu}(\mathcal{U}_0^{N-1}\mid \mathcal{F}_{\mathcal{E}})\geq \lim_{N\rightarrow +\infty}\frac{1}{N}H_{\nu}(\mathcal{V}_0^{N-1}\mid \mathcal{F}_{\mathcal{G}})\\
&=h_{\nu}^{(r)-}(S,\mathcal{V})=h_{\text{top}}(S,\mathcal{V})=h_{\text{top}}(T,\mathcal{U})\geq h_{\mu}^{(r)-}(T,\mathcal{U}).
\end{align*}
We complete the argument of this theorem.

\end{proof}

\end{document}